\theoremstyle{plain}
\newtheorem{thrm}{Theorem}[section] 
\newtheorem{prop}[thrm]{Proposition}
\newtheorem{definition}[thrm]{Definition}
\newtheorem{lem}[thrm]{Lemma}
\newtheorem{remark}[thrm]{Remark}
\date{}
\theoremstyle{definition}
\title{A new axiomatics for masures}
\author{Auguste \textsc{Hébert} \\Univ Lyon, UJM-Saint-Etienne CNRS\\ UMR 5208 CNRS, F-42023, SAINT-ETIENNE, France\\ auguste.hebert@univ-st-etienne.fr}
\theoremstyle{definition}\newtheorem{thm*}{Theorem}
\makeatletter \@addtoreset{figure}{section}\makeatother
\newcommand{\R}{\mathbb{R}}
\newcommand{\A}{\mathbb{A}}
\newcommand{\AC}{\mathcal{A}}
\newcommand{\N}{\mathbb{N}}
\newcommand{\Z}{\mathbb{Z}}
\newcommand{\C}{\mathbb{C}}
\newcommand{\Ne}{\mathbb{N}^*}
\newcommand{\I}{\mathcal{I}}
\newcommand{\T}{\mathcal{T}}
\newcommand{\Id}{\mathrm{Id}}
\newcommand{\q}{\mathfrak{q}}
\newcommand{\s}{\mathfrak{s}}
\newcommand{\F}{\mathfrak{F}}
\newcommand{\rr}{\mathfrak{r}}
\newcommand{\RR}{\mathfrak{R}}
\newcommand{\M}{\mathcal{M}}
\newcommand{\Fr}{\mathrm{Fr}}
\newcommand{\In}{\mathrm{Int}}
\newcommand{\conv}{\mathrm{conv}}
\newcommand{\V}{\mathcal{V}}
\newcommand{\HH}{\mathcal{H}}
\newcommand{\supp}{\mathrm{supp}}
\newcommand{\cl}{\mathrm{cl}}
\newcommand{\CC}{\mathcal{C}}
\newcommand{\FF}{\mathcal{F}}
\newcommand{\X}{\mathcal{X}}
\newcommand{\MB}{\mathbb{M}}
\begin{document}

\maketitle

\begin{abstract}
Masures are generalizations of Bruhat-Tits buildings. They were introduced by Gaussent and Rousseau to study Kac-Moody groups over ultrametric fields, which generalize reductive groups. Rousseau gave an axiomatic definition of these spaces. We propose an equivalent axiomatic, which is shorter, more practical and closer to the axiomatic of Bruhat-Tits buildings. Our main tool to prove the equivalence of the axiomatics is the study of the convexity properties in masures.
\end{abstract}

\section{Introduction}
An important tool to study a split reductive group $G$ over a non-archimedean local field is its Bruhat-Tits building defined by Bruhat and Tits in \cite{bruhat1972groupes} and \cite{bruhat1984groupes}. Kac-Moody groups are interesting infinite dimensional (if not reductive) generalizations of reductive groups. In order to study them over fields endowed with a discrete valuation,  Gaussent and Rousseau introduced masures (also known as hovels) in \cite{gaussent2008kac}, which are analogs of Bruhat-Tits buildings. Charignon and Rousseau generalized this construction in \cite{charignon2010immeubles}, \cite{rousseau2017almost}  and \cite{rousseau2016groupes}: Charignon treated the almost split case and Rousseau suppressed restrictions on the base field and on the group. Rousseau also defined an axiomatic of masures in \cite{rousseau2011masures}. Recently, Freyn, Hartnick, Horn and K{\"o}hl made an analog construction in the archimedean case (see \cite{freyn2017kac}): to each split real Kac-Moody group, they associate a space on which the group acts, generalizing the notion of riemannian symmetric space.

Masures enable to obtain results on the arithmetic of (almost)-split Kac-Moody groups over non-archimedean local fields. Let us survey them briefly. Let $G$ be such a group and $\I$ be its masure. In \cite{gaussent2008kac}, Gaussent and Rousseau  use $\I$ to prove a link between the Littlemann's path model and some kind of Mirkovi\'{c}-Vilonen cycle model of $G$. In \cite{gaussent2014spherical}, Gaussent and Rousseau associate a spherical Hecke algebra $^s\mathcal{H}$ to $G$  and they obtain a Satake isomorphism in this setting. These results generalize works of Braverman and Kazhdan obtained when $G$ is supposed affine, see \cite{braverman2011spherical}. In \cite{bardy2016iwahori}, Bardy-Panse, Gaussent and Rousseau define the  Iwahori-Hecke algebra ${^I}\HH$ of $G$.  Braverman, Kazhdan and Patnaik had already done this construction when $G$ is affine in \cite{braverman2016iwahori}. In \cite{hebertGK}, we obtain finiteness results on $G$ enabling to give a meaning to one side of the Gindikin-Karpelevich formula obtained by Braverman, Garland, Kazhdan and Patnaik in the affine case in \cite{braverman2014affine}.  In \cite{abdellatif2017completed}, together with Abdellatif, we define a completion of ${^I}\HH$ and generalize the construction of the Iwahori-Hecke algebra of $G$: we associate Hecke algebras to subgroups of $G$ more general than the Iwahori subgroup, the analogue of the parahoric subgroups. In \cite{bardy2017macdonald}, Bardy-Panse, Gaussent and Rousseau prove a Macdonald's formula for $G$: they give an explicit formula for the image of some basis of $^s\mathcal{H}$ by the Satake isomorphism. Their formula generalizes a well-known formula of Macdonald for reductive groups (see~\cite{macdonald1971spherical}) which had already been extended to affine Kac-Moody groups in \cite{braverman2016iwahori}.

\medskip
 Despite these results some very basic questions are still open in the theory of masures. In this paper we are interested in questions of enclosure maps and of convexity in masures. Let us be more precise. The masure is an object similar to the Bruhat-Tits building. This is a union of subsets called apartments. An apartment is a finite dimensional affine space equipped with a set of hyperplanes called walls. The group $G$ acts by permuting these apartments, which are therefore all isomorphic to one of them called the standard apartment $\A$. 

 To define the masure $\I$ associated to $G$, Gaussent and Rousseau (following Bruhat and Tits) first define $\A$. Let us describe it briefly. Suppose that the field of definition is local. Let  $Q^\vee$ be the coroot lattice of $G$ and $\Phi$ be its set of real roots.  One can consider $Q^\vee$ as a lattice of some affine space $\A$ and $\Phi$ as a set of linear forms on $\A$. Let $Y$ be a lattice of $\A$ containing $Q^\vee$ (one can consider $Y=Q^\vee$ in a first approximation). Then the set $\MB$ of walls of $\A$ is the set of hyperplanes containing an element of $Y$ and whose direction is $\ker(\alpha)$ for some $\alpha\in \Phi$. The half-spaces delimited by walls are called half-apartments. Suppose that $G$ is reductive. Then $\Phi$ is finite and $\I$ is a building. A well known property of buildings is that if $A$ is an apartment of $\I$, then $A\cap \A$ is a finite intersection of half-apartments and there exists an isomorphism from $A$ to $\A$ fixing $A\cap \A$ (see  2.5.7 and Proposition 2.5.8 of \cite{bruhat1972groupes}). Studying this question for masures seems natural for two reasons: first masures generalize Bruhat-Tits buildings and have properties similar to them and second because three of the five axioms of the axiomatic definition of Rousseau are weak forms of this property.

We study this question in the affine case and in the indefinite case.  Let us begin by the affine case, where we prove that this property is true:
 
 \begin{thrm}\label{ThmIntersection convexe affine intro}
Let $\I$ be a masure associated to an affine Kac-Moody group. Let $A$ be an apartment. Then $\A\cap A$ is a finite intersection of half-apartments of $\A$ and there exists an isomorphism from $\A$ to $A$ fixing $\A\cap A$.
\end{thrm}

We define a new axiomatic of masures and prove that it is equivalent to the one given by Rousseau (we recall it in~\ref{subsubDefinition des masures}), using the theorem above. Our axiomatic is simpler and closer to the usual geometric axiomatic of Euclidean buildings. To emphasize this analogy, we first recall one of their definitions in the case where the valuation is discrete   (see Section IV of \cite{brown1989buildings} or Section 6 of \cite{rousseau2004euclidean}, our definition is slightly modified but equivalent). 

\medskip

\begin{definition} A Euclidean building is a set $\I$ equipped with a set $\mathcal{A}$ of subsets called apartments satisfying the following axioms : 

(I0) Each apartment is a Euclidean apartment.

(I1) For any two faces $F$ and $F'$ there exists an apartment containing $F$ and $F'$.

(I2) If $A$ and $A'$ are apartments, then $A\cap A'$ is a finite intersection of half-apartments and there exists an isomorphism $\phi:A\rightarrow A'$ fixing $A\cap A'$.

\end{definition}

In the statement of the next theorem, we use the notion of chimney. They are some kind of thickened sector faces. The word ``splayed'' will be explained later. We prove the following theorem:

\begin{thrm}\label{ThmNouvelle axiomatique affine intro}
Suppose $G$ is an affine Kac-Moody group. Let $\A$ be the apartment associated to the root system of $G$. Let  $(\I,\mathcal{A})$ be a couple such that $\I$ is a set and $\mathcal{A}$ is a set of subsets of $\I$ called apartments. Then $(\I,\mathcal{A})$ is a masure of type $\A$ in the sense of \cite{rousseau2011masures} if and only if it satisfies the following axioms: 

(MA af i) Each apartment is an apartment of type $\A$.

(MA af ii ) If $A$ and $A'$ are two apartments, then $A\cap A'$ is a finite intersection of half-apartments and there exists an isomorphism $\phi:A\rightarrow A'$ fixing $A\cap A'$.

(MA af iii) If $\RR$ is the germ of a splayed chimney and $F$ is a face or a germ of a chimney, then there exists an apartment containing $\RR$ and $F$.

\end{thrm}

We now turn to the general (not necessarily affine) case. Similarly to buildings, we can still define a fundamental chamber $C^v_f$ in the standard apartment $\A$. This enables one to define the Tits cone $\T=\bigcup_{w\in W^v} w.\overline{C^v_f}$, where $W^v$ is the Weyl group of $G$. An important difference between buildings and masures is that when $G$ is reductive, $\T=\A$ and when $G$ is not reductive, $\T\neq \A$ is only a convex cone. This defines a preorder on $\A$ by saying that $x,y\in \A$ satisfy $x\leq y$ if $y\in x+\T$. This preorder extends to a preorder on $\I$ - the Tits preorder - by using isomorphisms of apartments. Convexity properties in $\I$ were known only on preordered pairs of points. If $A,A'$ are apartments and contain two points $x,y$ such that $x\leq y$ then $A\cap A'$ contains the segment in $A$ between $x$ and $y$ and there exists an isomorphism from $A$ to $A'$ fixing this segment (this is Proposition~5.4 of \cite{rousseau2011masures}).

 A ray (half-line) of  $\I$ is said to be generic if its direction meets the interior $\mathring{\T}$ of $\T$. A chimney is splayed if it contains a generic ray. The main result of this paper is the following theorem:

\begin{thrm}\label{ThmIntersection convexe intro}
Let $A$ be an apartment such that $\A\cap A$ contains a generic ray of $\A$. Then $\A\cap A$ is a finite intersection of half-apartments of $\A$ and there exists an isomorphism from $\A$ to $A$ fixing $\A\cap A$.
\end{thrm}

Using this theorem, we prove that the axiomatic definition of Rousseau is equivalent to a  simpler one:

\begin{thrm}\label{ThmNouvelle axiomatique intro}
Let $\A$ be the apartment associated to the root system of $G$. Let  $(\I,\mathcal{A})$ be a couple such that $\I$ is a set and $\mathcal{A}$ is a set of subsets of $\I$ called apartments. Then $(\I,\mathcal{A})$ is a masure of type $\A$ in the sense of \cite{rousseau2011masures} if and only if it satisfies the following axioms: 

(MA i) Each apartment is an apartment of type $\A$.

(MA ii) If two apartments $A$ and $A'$ are such that $A\cap A'$ contains a generic ray, then $A\cap A'$ is a finite intersection of half-apartments and there exists an isomorphism $\phi:A\rightarrow A'$ fixing $A\cap A'$.

(MA iii) If $\RR$ is the germ of a splayed chimney and $F$ is a face or a germ of a chimney, then there exists an apartment containing $\RR$ and $F$.

\end{thrm}  

The axiom (MA iii) (very close to the axiom (MA3) of Rousseau) corresponds to the existence parts of Iwasawa,  Bruhat and  Birkhoff, decompositions  in $G$, respectively for $F$ a face and $\RR$ a sector-germ, $F$ and $\RR$ two sector-germs of the same sign and $F$ and $\RR$ two opposite sector-germs. The axiom (MA ii), which implies the axiom (MA4) of Rousseau, corresponds to the uniqueness part of these decompositions. 

The fact that if $x,y\in \I$ are such that $x\leq y$, the segment between $x$ and $y$ does not depend on the apartment containing $\{x,y\}$ was an axiom of masures (axiom (MAO)). A step of our proof of Theorem~\ref{ThmNouvelle axiomatique intro} is to show that (MAO) is actually a consequence of the other axioms of masures (see Proposition~\ref{propSuppression de MAO}).

\medskip

To define faces and chimneys, Rousseau uses enclosure maps (see~\ref{subsubEnclosure maps} for a precise definition). When $G$ is a reductive group over a local field, the enclosure of a set $P$ of $\A$ is the intersection of the half-apartments of $\A$ containing $P$.  When $G$ is no more reductive $\MB$ can be dense in $\A$. Consequently, Gaussent and Rousseau define the enclosure of a subset to be a filter and no more necessarily a set (which is already the case for buildings when the valuation of the base field is not discrete). Moreover, there are several natural choices of enclosure maps: one can use all the roots (real and imaginary) or only the real roots, one can allow arbitrary intersections of half-apartments or only finite intersections of half-apartments ... This leads to lots of definitions and notations in \cite{rousseau2017almost}. The theorem above proves that all these choices of enclosure maps lead to the same definition of masure; therefore the ``good'' enclosure map is the biggest one, which involves only real roots and finite intersections.

\medskip

Actually we do not limit our study to masures associated to Kac-Moody groups: for us a masure is a set satisfying the axioms of \cite{rousseau2011masures} and whose apartments are associated to a root generating system (and thus to a Kac-Moody matrix). We do not assume  that there exists a group acting strongly transitively on it.  We do not either make any discreteness hypothesis for the standard apartment: if $M$ is a wall, the set of walls parallel to it is not necessarily discrete;  this enables to handle masures associated to split Kac-Moody groups over any ultrametric field.

\medskip

The paper is organized as follows. 

In Section~\ref{secApartment standard}, we describe the general framework and recall the definition of masures.

In Section~\ref{secProprietes generales des intersections d'apparts} we study the intersection of two apartments $A$ and $B$, without assuming that $A\cap B$ contains a generic ray. We prove that $A\cap B$ can be written as a union of enclosed subsets and that $A\cap B$ is enclosed when it is convex. If $P\subset A\cap B$, we give a sufficient condition of existence of an isomorphism from $A$ to $B$ fixing $P$.

In Section~\ref{secIntersection de deux apparts contenant une demi-droite generique}, we study the intersection of two apartments sharing a generic ray and prove Theorem~\ref{ThmIntersection convexe intro}, which is stated as Theorem~\ref{thmIntersection enclose fort}. The reader only interested in masures associated to affine Kac-Moody groups can skip this Section and replace Theorem~\ref{thmIntersection enclose fort} by Lemma~\ref{lemConvexite des intersections d'apparts cas affine ordonne}, which is far more easy to prove.

In Section~\ref{secAxiomatique des masures}, we deduce new axiomatics of masures: we show Theorem~\ref{ThmNouvelle axiomatique intro} and Theorem~\ref{ThmNouvelle axiomatique affine intro}, which correspond to Theorem~\ref{thmAxiomatique des masures} and Theorem~\ref{thmAxiomatique dans le cas affine}.

\subsubsection*{Acknowledgement} I thank St\'ephane Gaussent for discussions on the subject and for his advice on the writing of this paper. I thank Guy Rousseau for discussions on this topic, for his careful reading and comments on a previous version of this paper. 

\subsubsection*{Funding} The author was partially supported by the ANR grant ANR-15-CE40-0012.

\tableofcontents

\section{General framework, Masure}\label{secApartment standard}

In this section, we define our framework and recall the definition of masures. Then we recall some notions on masures. References for this section are \cite{rousseau2011masures}, Section 1 and 2 and Section 1 of \cite{gaussent2014spherical}. 

\subsection{Standard apartment}

\subsubsection{Root generating system}

Let $A$ be a \textbf{Kac-Moody matrix} (also known as generalized Cartan matrix) i.e a square matrix $A=(a_{i,j})_{i,j\in I}$ with integers coefficients, indexed by  a finite set $I$ and satisfying: 
\begin{enumerate}
\item $\forall i\in I,\ a_{i,i}=2$

\item $\forall (i,j)\in I^2|i \neq j,\ a_{i,j}\leq 0$

\item $\forall (i,j)\in I^2,\ a_{i,j}=0 \Leftrightarrow a_{j,i}=0$.
\end{enumerate}

 A  \textbf{root generating system}   of type $A$ is a $5$-tuple $\mathcal{S}=(A,X,Y,(\alpha_i)_{i\in I},(\alpha_i^\vee)_{i\in I})$ made of a Kac-Moody matrix $A$ indexed by $I$, of two dual free $\Z$-modules $X$ (of \textbf{characters}) and $Y$ (of \textbf{cocharacters}) of finite rank $\mathrm{rk}(X)$, a family $(\alpha_i)_{i\in I}$ (of \textbf{simple roots}) in $X$ and a family $(\alpha_i^\vee)_{i\in I}$ (of \textbf{simple coroots}) in $Y$. They have to satisfy the following compatibility condition: $a_{i,j}=\alpha_j(\alpha_i^\vee)$ for all $i,j\in I$. We also suppose that the family $(\alpha_i)_{i\in I}$ is free in $X$ and that the family $(\alpha_i^\vee)_{i\in I}$ is free in $Y$.

Let $\A=Y\otimes \R$. Every element of $X$ induces a linear form on $\A$. We will consider $X$ as a subset of the dual $\A^*$ of $\A$: the $\alpha_i$'s, $i\in I$ are viewed as linear forms on $\A$. For $i\in I$, we define an involution $r_i$ of $\A$ by $r_i(v)=v-\alpha_i(v)\alpha_i^\vee$ for all $v\in \A$. Its space of fixed points is $\ker \alpha_i$. The subgroup of $\mathrm{GL}(\A)$ generated by the $\alpha_i$ for $i\in I$ is denoted by $W^v$ and is called the \textbf{Weyl group} of $\mathcal S$. The system $(W^v,\{r_i|i\in I\})$ is a Coxeter system. For $w\in W^v$, we denote by $\ell(w)$ the length of $w$ with respect to $\{r_i|i\in I\}$.

One defines an action of the group $W^v$ on $\A^*$ by the following way: if $x\in \A$, $w\in W^v$ and $\alpha\in \A^*$ then $(w.\alpha)(x)=\alpha(w^{-1}.x)$. Let $\Phi=\{w.\alpha_i|(w,i)\in W^v\times I\}$, $\Phi$ is the set of \textbf{real roots}. Then $\Phi\subset Q$, where $Q=\bigoplus_{i\in I}\Z\alpha_i$. Let $Q^+=\bigoplus_{i\in I} \N \alpha_i$, $\Phi^+=Q^+\cap \Phi$ and $\Phi^-=(-Q^+)\cap \Phi$. Then $\Phi=\Phi^+\sqcup \Phi^-$. Let $\Delta$ be the set of all roots as defined in \cite{kac1994infinite} and $\Delta_{im}=\Delta\backslash \Phi$. Then $(\A,W^v,(\alpha_i)_{i\in I},(\alpha_i^\vee)_{i\in I},\Delta_{im})$ is a vectorial datum as in  Section 1 of \cite{rousseau2011masures}.

\subsubsection{Vectorial faces and Tits cone}\label{subsubsecVectorial faces}

Define $C_f^v=\{v\in \A|\  \alpha_i(v)>0,\ \forall i\in I\}$. We call it the \textbf{fundamental chamber}. For $J\subset I$, one sets $F^v(J)=\{v\in \A|\ \alpha_i(v)=0\ \forall i\in J,\alpha_i(v)>0\ \forall i\in J\backslash I\}$. Then the closure $\overline{C_f^v}$ of $C_f^v$ is the union of the $F^v(J)$ for $J\subset I$. The \textbf{positive} (resp. \textbf{negative}) \textbf{vectorial faces} are the sets $w.F^v(J)$ (resp. $-w.F^v(J)$) for $w\in W^v$  and $J\subset I$. A \textbf{vectorial face} is either a positive vectorial face or a negative vectorial face. We call \textbf{positive chamber} (resp. \textbf{negative}) every cone  of the form $w.C_f^v$ for some $w\in W^v$ (resp. $-w.C_f^v$).  For all $x\in C_f^v$ and for all $w\in W^v$, $w.x=x$ implies that $w=1$. In particular the action of $w$ on the positive chambers is simply transitive. The \textbf{Tits cone} $\mathcal T$ is defined by $\mathcal{T}=\bigcup_{w\in W^v} w.\overline{C^v_f}$. We also consider the negative cone $-\mathcal{T}$.
We define a $W^v$ invariant preorder $\leq$ (resp. $\mathring{\leq}$) on $\A$, the \textbf{Tits preorder} (the \textbf{Tits open preorder}) by: $\forall (x,y)\in \A\mathrm{}^2$, $x\leq y\ \Leftrightarrow\ y-x\in \mathcal{T}$ (resp. $x\mathring{\leq} y\ \Leftrightarrow\ y-x\in \mathring{\T}\cup\{0\})$.

\subsubsection{Weyl group of  $\A$}\label{subsubGroupe de Weyl}
We now define the Weyl group $W$ of $\A$.   If $X$ is an affine subspace of $\A$, one denotes by $\vec{X}$ its direction. One equips $\A$ with a family $\MB$ of affine hyperplanes called \textbf{real walls} such that:
\begin{enumerate}

\item For all $M\in \MB$, there exists $\alpha_M\in \Phi$ such that $\vec{M}=\ker(\alpha_M)$.

\item For all $\alpha\in \Phi$, there exists an infinite number of hyperplanes $M\in \MB$ such that $\alpha=\alpha_M$.

\item\label{itStabilite des murs sous le groupe de weyl} If $M\in \MB$, we denote by $r_M$ the reflexion of hyperplane $M$ whose associated linear map is $r_{\alpha_M}$. We assume that the group $W$ generated by the $r_M$ for $M\in \MB$ stabilizes $\MB$. 
\end{enumerate}

The group $W$  is the Weyl group of $\A$. A point $x$ is said to be \textbf{special} if every real wall is parallel to a real wall containing $x$. We suppose that $0$ is special and thus $W\supset W^v$. 

If $\alpha\in \A^*$ and $k\in \R$, one sets $M(\alpha,k)=\{v\in \A| \alpha(v)+k=0\}$. Then for all $M\in \MB$, there exists $\alpha\in \Phi$ and $k_M\in \R$ such that $M=M(\alpha,k_M)$. If $\alpha\in \Phi$, one sets $\Lambda_\alpha=\{k_M|\ M\in \mathcal{M}\mathrm{\ and\ }\vec{M}=\ker(\alpha)\}$. Then $\Lambda_{w.\alpha}=\Lambda_\alpha$ for all $w\in W^v$ and $\alpha\in \Phi$. 

If $\alpha\in \Phi$, one denotes by  $\tilde{\Lambda}_\alpha$ the subgroup of $\R$ generated by $\Lambda_\alpha$.  By~(\ref{itStabilite des murs sous le groupe de weyl}), $\Lambda_\alpha=\Lambda_{\alpha}+2\tilde{\Lambda}_\alpha$ for all $\alpha\in \Phi$. In particular, $\Lambda_\alpha=-\Lambda_{\alpha}$ and when $\Lambda_\alpha$ is discrete, $\tilde{\Lambda}_\alpha=\Lambda_\alpha$ is isomorphic to $\Z$. 

One sets $Q^\vee= \bigoplus_{\alpha\in \Phi} \tilde{\Lambda}_\alpha \alpha^\vee$. This is a subgroup of $\A$ stable under the action of $W^v$. Then one has $W=W^v\ltimes Q^\vee$.

 For a first reading, the reader can consider the situation where the walls are the $\phi^{-1}(\{k\})$ for $\phi\in \Phi$ and $k\in \Z$. We then have $\Lambda_\alpha=\Z$ for all $\alpha\in \Phi$, and $Q^\vee=\bigoplus_{i\in I}\Z\alpha_i^\vee$.

\subsubsection{Filters}

\begin{definition}
A filter in a set $E$ is a nonempty set $F$ of nonempty subsets of $E$ such that, for all subsets $S$, $S'$ of $E$,  if $S$, $S'\in F$ then $S\cap S'\in F$ and, if $S'\subset S$, with $S'\in F$ then $S\in F$.
\end{definition}

If $F$ is a filter in a set $E$, and $E'$ is a subset of $E$, one says that $F$ contains $E'$ if every element of $F$ contains $E'$. If $E'$ is nonempty, the set $F_{E'}$ of subsets of $E$ containing $E'$ is a filter. By abuse of language, we will sometimes say that $E'$ is a filter by identifying $F_{E'}$ and $E'$. If $F$ is a filter in $E$, its closure $\overline F$ (resp. its convex envelope) is the filter of subsets of $E$ containing the closure (resp. the convex envelope) of some element of $F$. A filter $F$ is said to be contained in an other filter $F'$: $F\subset F'$ (resp. in a subset $Z$ in $E$: $F\subset Z$) if and only if any set in $F'$ (resp. if $Z$) is in $F$.

If $x\in \A$ and $\Omega$ is a subset of $\A$ containing $x$ in its closure, then the \textbf{germ} of $\Omega$ in $x$ is the filter $germ_x(\Omega)$ of subsets of $\A$ containing a neighborhood of $x$ in $\Omega$.

A \textbf{sector} in $\A$ is a set of the form $\mathfrak{s}=x+C^v$ with $C^v=\pm w.C_f^v$ for some $x\in \A$ and $w\in W^v$. A point $u$ such that $\mathfrak{s}=u+C^v$ is called a \textbf{base point of} $\mathfrak{s}$ and $C^v$ is its \textbf{direction}.  The intersection of two sectors of the same direction is  a sector of the same direction.

The \textbf{sector-germ} of a sector $\mathfrak{s}=x+C^v$ is the filter $\mathfrak{S}$ of subsets of $\A$ containing an $\A$-translate of $\mathfrak{s}$. It only depends on the direction $C^v$. We denote by $+\infty$ (resp. $-\infty$) the sector-germ of $C_f^v$ (resp. of $-C_f^v$).

A ray $\delta$ with base point $x$ and containing $y\neq x$ (or the interval $]x,y]=[x,y]\backslash\{x\}$ or $[x,y]$ or the line containing $x$ and $y$) is called \textbf{preordered} if $x\leq y$ or $y\leq x$ and \textbf{generic} if $y-x\in \pm\mathring \T$, the interior of $\pm \T$.

\subsubsection{Enclosure maps}\label{subsubEnclosure maps}

 Let $\Delta=\Phi\cup\Delta_{im}^+\cup\Delta_{im}^-$ be the set of all roots. For $\alpha\in \Delta$, and $k\in \R\cup\{+\infty\}$, let $D(\alpha,k)=\{v\in \A| \alpha(v)+k\geq 0\}$ (and $D(\alpha,+\infty)=\A\mathrm{}$ for all $\alpha\in \Delta$) and $D^\circ(\alpha,k)=\{v\in \A|\ \alpha(v)+k > 0\}$ (for $\alpha\in \Delta$ and $k\in \R\cup\{+\infty\}$). If $\alpha\in \Delta_{im}$, one sets $\Lambda_\alpha=\R$. Let $[\Phi,\Delta]$ be the set of sets $\mathcal{P}$ satisfying $\Phi\subset \mathcal{P}\subset \Delta$. 

If $X$ is a set, one denotes by $\mathscr{P}(X)$ the set of subsets of $X$. Let $\mathcal{L}$ be the set of families $(\Lambda'_\alpha)\in \mathscr{P}(\R)^\Delta$ such that for all $\alpha\in \Delta$, $\Lambda_\alpha\subset \Lambda'_\alpha$ and $\Lambda_{\alpha}'=-\Lambda_{-\alpha}'$.

Let $\mathscr{F}(\A)$ be the set of filters of $\A$. If $\mathcal{P}\in [\Phi,\Delta]$ and $\Lambda'\in \mathcal{L}$, one defines the map $\cl^\mathcal{P}_{\Lambda'}:\mathscr{F}(\A)\rightarrow \mathscr{F}(\A)$ as follows. If $U\in \mathscr{F}(\A)$, \[\cl^\mathcal{P}_{\Lambda'} (U)=\{V\in U|\ \exists (k_\alpha)\in \prod_{\alpha\in \mathcal{P}}(\Lambda'_\alpha\cup\{+\infty\})|\ V\supset \bigcap_{\alpha\in \mathcal{P}} D(\alpha,k_\alpha)\supset U\}.\] 

If $\Lambda'\in \mathcal{L}$, let $\cl^\#_{\Lambda'}:\mathscr{F}(\A)\rightarrow \mathscr{F}(\A)$  defined as follows. If $U\subset \A$, 
\[\cl^\#_{\Lambda'}(U)=\{V\in U|\ \exists n\in \N, (\beta_i)\in \Phi^n,\ (k_i)\in \prod_{i=1}^n \Lambda'_{\beta_i}|\ V\supset \bigcap_{i=1}^n D(\beta_i,k_i)\supset U\}.\]

Let $\mathcal{CL}^\infty=\{\cl^\mathcal{P}_{\Lambda'}|\mathcal{P}\in [\Phi,\Delta]\mathrm{\ and\ }\Lambda'\in \mathcal{L}\}$. An element of $\mathcal{CL}^\infty$ is called an \textbf{infinite enclosure map}. Let $\mathcal{CL}^\#=\{\cl^\#_{\Lambda'}|\ \Lambda'\in \mathcal{L}\}$. An element of $\mathcal{CL}^\#$ is called  a \textbf{finite} enclosure map. Although $\mathcal{CL}^\infty$ and $\mathcal{CL}^\#$ might not be disjoint (for example if $\A$ is associated to a reductive group over a local field), we define the set of \textbf{enclosure maps} $\mathcal{CL}=\mathcal{CL}^\infty\sqcup \mathcal{CL}^\#$ : in~\ref{subsubDefinition des faces,...}, the definition of the set of faces associated to an enclosure map $\cl$ depends on if $\cl$ is finite or not.

If $\cl\in \mathcal{CL}$, $\cl=\cl^\mathcal{P}_{\Lambda'}$ with $\mathcal{P}\in [\Phi,\Delta]\cup\{\#\}$ and $\Lambda'\in \mathcal{L}$, then for all $\alpha\in \Delta$, $\Lambda'_\alpha=\{k\in \R|\ \cl(D(\alpha,k))=D(\alpha,k)\}$. Therefore  $\cl^\#:=\cl^\#_{\Lambda'}$ is well defined. We do not  use exactly the same notation as Rousseau in \cite{rousseau2017almost} in which $\cl^\#$ means $\cl^\#_\Lambda$.

If $\Lambda'\in \mathcal{L}$, one sets $\mathcal{CL}_{\Lambda'}=\{\cl^\mathcal{P}_{\Lambda'}|\ \mathcal{P}\in [\Phi,\Delta]\}\sqcup \{ \cl^\#_{\Lambda'}\}$.

In order to simplify, the reader can consider the situation where $\Lambda_\alpha=\Lambda'_\alpha=\Z$ for all $\alpha\in \Phi$, $\mathcal{P}=\Delta$ and $\cl=\cl^\Delta_\Lambda$, which is the situation of \cite{gaussent2014spherical}, \cite{bardy2016iwahori} and \cite{hebertGK} for example.  

\medskip

An \textbf{apartment} is a root generating system equipped with a Weyl group $W$ (i.e with a set $\MB$ of real walls, see~\ref{subsubGroupe de Weyl}) and a family $\Lambda'\in \mathcal{L}$. Let $\A=(\mathcal{S},W,\Lambda')$ be an apartment.  A set of the form $M(\alpha,k)$, with $\alpha\in \Phi$ and $k\in \Lambda'_\alpha$  is called a \textbf{wall} of $\A$ and a set of the form $D(\alpha,k)$, with $\alpha\in \Phi$ and $k\in \Lambda'_\alpha$ is called a \textbf{half-apartment} of $\A$. A subset $X$ of $\A$ is said to be enclosed if there exist $k\in \N$, $\beta_1,\ldots,\beta_k\in \Phi$ and $(\lambda_1,\ldots,\lambda_k)\in\prod_{i=1}^k \Lambda'_{\beta_i}$ such that $X=\bigcap_{i=1}^k D(\beta_i,\lambda_i)$ (i.e $X=\cl^\#_{\Lambda'}(X)$). 
 As we shall see, if $\Lambda'\in \mathcal{L}$ is fixed, the definition of masures does not depend on the choice of an enclosure map in $\mathcal{CL}_{\Lambda'}$ and thus it will be more convenient to choose $\cl^\#_{\Lambda'}$, see Theorem~\ref{thmAxiomatique des masures} and Theorem~\ref{thmAxiomatique des masures forte}.

\begin{remark}
Here and in the following, we may replace $\Delta_{im}^+$ by any $W^v$-stable subset of $\bigoplus_{i\in I} \R_+ \alpha_i$ such that $\Delta_{im}^+ \cap \bigcup_{\alpha\in \Phi} \R\alpha$ is empty. We then set $\Delta^-_{im}=-\Delta^+_{im}$. This is useful to include the case of almost split Kac-Moody groups, see 6.11.3 of \cite{rousseau2017almost}.
\end{remark}

\subsection{Masure}

In this section, we define masures. They were introduced in \cite{gaussent2008kac} for symmetrizable split Kac-Moody groups over  ultrametric fields whose residue field contains $\C$, axiomatized in \cite{rousseau2011masures}, then developed and generalized to almost-split Kac-Moody groups over ultrametric fields in \cite{rousseau2016groupes} and \cite{rousseau2017almost}.

\subsubsection{Definitions of  faces, chimneys and related notions}\label{subsubDefinition des faces,...}
Let $\A=(\mathcal{S},W,\Lambda')$ be an apartment. We choose an enclosure map $\cl\in \mathcal{CL}_{\Lambda'}$.

A \textbf{local-face} is associated to a point $x$ and a vectorial face $F^v$ in $\A$; it is the filter $F^\ell(x,F^v)=germ_x(x+F^v)$ intersection of $x+F^v$ and the filter of neighborhoods of $x$ in $\A$.
A \textbf{face} $F$ in $\A$ is a filter associated to a point $x\in \A$ and a vectorial face $F^v\subset \A$. More precisely, if $\cl$ is infinite (resp. $\cl$ is finite), $\cl=\cl^\mathcal{P}_{\Lambda'}$  with $\mathcal{P}\in [\Phi,\Delta]$ (resp. $\cl=\cl^\#_{\Lambda'}$),  $F(x,F^v)$ is the filter made of the subsets containing an intersection  (resp. a finite intersection) of half-spaces $D(\alpha,\lambda_\alpha)$ or $D^\circ(\alpha,\lambda_\alpha)$, with $\lambda
_\alpha\in \Lambda'_\alpha\cup\{+\infty\}$ for all $\alpha\in \mathcal{P}$  (at most one $\lambda_\alpha\in \Lambda_\alpha$ for each $\alpha\in \mathcal{P}$)  (resp. $\Phi$).

There is an order on the faces: if $F\subset \overline{F'}$ one says that``$F$ is a face of $F'$'' or ``$F'$ contains $F$''. The dimension of a face $F$ is the smallest dimension of an affine space generated by some $S\in F$. Such an affine space is unique and is called its \textbf{support}. A face is said to be \textbf{spherical} if the direction of its support meets the open Tits cone $\mathring \T$ or its opposite $-\mathring \T$; then its pointwise stabilizer $W_F$ in $W^v$ is finite.

A \textbf{chamber} (or alcove) is a face of the form $F(x,C^v)$ where $x\in \A$ and $C^v$ is a vectorial chamber of $\A$.

A \textbf{panel} is a face of the form $F(x,F^v)$, where $x\in \A$ and $F^v$ is a vectorial face of $\A$ spanning a wall.

A \textbf{chimney} in $\A$ is associated to a face $F=F(x,F_0^v)$ and to a vectorial face $F^v$; it is the filter $\mathfrak{r}(F,F^v)=\mathrm{cl}(F+F^v)$. The face $F$ is the basis of the chimney and the vectorial face $F^v$ is its direction. A chimney is \textbf{splayed} if $F^v$ is spherical, and is \textbf{solid} if its support (as a filter, i.e., the smallest affine subspace of $\A$  containing $\mathfrak{r}$) has a finite pointwise stabilizer in $W^v$. A splayed chimney is therefore solid. 

A \textbf{shortening} of a chimney $\mathfrak{r}(F,F^v)$, with $F=F(x,F_0^v)$ is a chimney of the form $\mathfrak{r} (F(x+\xi,F_0^v),F^v)$ for some $\xi\in \overline{F^v}$. The \textbf{germ} of a chimney $\mathfrak{r}$ is the filter of subsets of $\A$ containing a shortening of $\mathfrak{r}$ (this definition of shortening is slightly different from the one of \cite{rousseau2011masures} 1.12 but follows \cite{rousseau2017almost} 3.6) and we obtain the same germs with these two definitions).

\subsubsection{Masure}\label{subsubDefinition des masures}

An \textbf{apartment of type }$\A$ is a set $A$ with a nonempty set $\mathrm{Isom}(\A,A)$ of bijections (called \textbf{Weyl-isomorphisms}) such that if $f_0\in \mathrm{Isom}(\A,A)$ then $f\in \mathrm{Isom}(\A,A)$ if and only if, there exists $w\in W$ satisfying $f=f_0\circ w$. We will say \textbf{isomorphism} instead of Weyl-isomorphism in the sequel. An isomorphism between two apartments $\phi:A\rightarrow A'$ is a bijection such that ($f\in \mathrm{Isom}(\mathbb{A},A)$ if, and only if, $\phi \circ f\in \mathrm{Isom}(\A,A')$). We extend all the notions that are preserved by $W$ to each apartment. Thus sectors, enclosures, faces and chimneys are well defined in any apartment of type $\A$.

\begin{definition}
A masure of type $(\A,\cl)$ is a set $\mathcal{I}$ endowed with a covering $\mathcal{A}$ of subsets called \textbf{apartments} such that: 

(MA1) Any $A\in \mathcal{A}$ admits a structure of apartment of type $\A$.

(MA2, $\cl$) If $F$ is a point, a germ of a preordered interval, a generic ray or a solid chimney in an apartment $A$ and if $A'$ is another apartment containing $F$, then $A\cap A'$ contains the enclosure $\mathrm{cl}_A(F)$ of $F$ and there exists an isomorphism from $A$ onto $A'$ fixing $\mathrm{cl}_A(F)$.

(MA3, $\cl$) If $\mathfrak{R}$ is the germ of a splayed chimney and if $F$ is a face or a germ of a solid chimney, then there exists an apartment containing $\mathfrak{R}$ and $F$.

(MA4, $\cl$) If two apartments $A$, $A'$ contain $\mathfrak{R}$ and $F$ as in (MA3), then there exists an isomorphism from $A$ to $A'$ fixing $\mathrm{cl}_A(\mathfrak{R}\cup F)$.

(MAO) If $x$, $y$ are two points contained in two apartments $A$ and $A'$, and if $x\leq_{A} y$ then the two segments $[x,y]_A$ and $[x,y]_{A'}$ are equal.
\end{definition}

In this definition, one says that an apartment contains a germ of a filter if it contains at least one element of this germ. One says that a map fixes a germ if it fixes at least one element of this germ.

The main example of masure is the masure associated to an almost-split Kac-Moody group over an ultrametric field, see \cite{rousseau2017almost}.

\subsubsection{Example: masure associated to a split Kac-Moody group over an ultrametric field}\label{subsecMasure associee} 
Let $A$ be a Kac-Moody matrix and $\mathcal{S}$ be a root generating system of type $A$.
We consider the group functor $\mathbf{G}$ associated to the root generating system $\mathcal{S}$ in \cite{tits1987uniqueness} and in Chapitre 8 of  \cite{remy2002groupes}. This functor is a functor from the category of rings to the category of groups satisfying axioms (KMG 1) to (KMG 9) of \cite{tits1987uniqueness}. When $R$ is a field, $\mathbf{G}(R)$ is uniquely determined by these axioms by Theorem 1' of \cite{tits1987uniqueness}. This functor contains a toric functor $\mathbf{T}$, from the category of rings to the category of commutative groups (denoted $\mathcal{T}$ in \cite{remy2002groupes}) and two functors $\mathbf{U^+}$ and $\mathbf{U^-}$ from the category of rings to the category of groups. 

 Let $\mathcal{K}$ be a field equipped with a non-trivial valuation $\omega:\mathcal{K}\rightarrow \R\cup\{+\infty\}$, $\mathcal{O}$ its ring of integers and $G=\mathbf{G}(\mathcal{K})$ (and $U^+=\mathbf{U^+}(\mathcal{K})$, ...). For all $\epsilon\in \{-,+\}$, and all $\alpha\in \Phi^\epsilon$, we have an isomorphism $x_\alpha$ from $\mathcal{K}$ to a group $U_\alpha$. For all $k\in \R$, one defines a subgroup $U_{\alpha,k}:=x_\alpha(\{u\in \mathcal{K}|\ \omega(u)\geq k\})$. Let $\I$ be the masure associated to $G$ constructed in \cite{rousseau2016groupes}. Then for all $\alpha\in \Phi$, $\Lambda_\alpha=\Lambda'_\alpha=\omega(\mathcal{K})\backslash\{+\infty\}$ and $\cl=\cl_{\Lambda}^\Delta$. If moreover $\mathcal{K}$ is local, one has (up to renormalization, see Lemma 1.3 of \cite{gaussent2014spherical}) $\Lambda_\alpha=\Z$ for all $\alpha\in \Phi$. 
  Moreover, we have: \begin{itemize}
\item[-] the fixer of $\A$ in $G$ is $H=\mathbf{T}(\mathcal{O})$ (by remark 3.2 of \cite{gaussent2008kac})

\item[-] the fixer of $\{0\}$ in $G$ is $K_s=\mathbf{G}(\mathcal{O})$ (by example 3.14 of \cite{gaussent2008kac}).

\item[-] for all $\alpha\in \Phi$ and $k\in \Z$, the fixer of $D(\alpha,k)$ in $G$ is $H.U_{\alpha,k}$ (by 4.2 7) of \cite{gaussent2008kac})

\item[-] for all $\epsilon\in \{-,+\}$, $H.U^\epsilon$ is the fixer of $\epsilon \infty$ (by 4.2 4) of \cite{gaussent2008kac}).

\end{itemize}

If moreover, $\mathcal{K}$ is local, with residue cardinal $q$, each panel is contained in $1+q$ chambers.

The group $G$ is reductive if and only if $W^v$ is finite. In this case, $\I$ is the usual Bruhat-Tits building of $G$ and one has $\T=\A$.

\subsection{Preliminary notions on masures}
In this subsection we recall notions on masures introduced in \cite{gaussent2008kac}, \cite{rousseau2011masures}, \cite{hebertGK} and \cite{hebert2016distances}.

\subsubsection{Tits preorder and Tits open preorder on $\I$}
As the Tits preorder $\leq$ and the Tits open preorder $\mathring{\leq}$ on $\A$ are invariant under the action of $W^v$, one can equip each apartment $A$ with preorders $\leq_A$ and $\mathring{\leq}_A$.  Let $A$ be an apartment of $\I$ and $x,y\in A$ such that $x\leq_A y$ (resp. $x\mathring{\leq}_A y$). Then by Proposition 5.4 of \cite{rousseau2011masures}, if $B$ is an apartment containing $x$ and $y$, then $x\leq_B y$ (resp. $x\mathring{\leq}_B y$). This defines a relation $\leq$ (resp $\mathring{\leq}$) on $\I$.  By Th\'eor\`eme 5.9 of \cite{rousseau2011masures}, this defines a  preorder $\leq$ (resp. $\mathring{\leq}$) on $\I$. It is invariant by isomorphisms of apartments: if $A,B$ are apartments, $\phi:A\rightarrow B$ is an isomorphism of apartments and $x,y\in A$ are such that $x\leq y$ (resp. $x\mathring{\leq } y$), then $\phi(x)\leq \phi(y)$ (resp. $\phi(x)\mathring{\leq} \phi(y)$). We call it the \textbf{Tits preorder on $\I$} (resp. the \textbf{Tits open preorder on $\I$}).

\subsubsection{Retractions centered at sector-germs}\label{subsecRetractions}
Let  $\s$ be a sector-germ of $\I$ and $A$ be an apartment containing it. Let $x\in \I$. By (MA3), there exists an apartment $A_x$ of $\I$ containing $x$ and $\s$. By (MA4), there exists an isomorphism of apartments $\phi:A_x\rightarrow A$ fixing $\s$. By \cite{rousseau2011masures} 2.6, $\phi(x)$ does not depend on the choices we made and thus we can set $\rho_{A,\mathfrak{s}}(x)=\phi(x)$.

The map $\rho_{A,\s}$ is a retraction from $\I$ onto $A$. It only depends on $\s$ and $A$ and we call it the \textbf{retraction onto $A$ centered at $\s$}. 

If $A$ and $B$ are two apartments, and $\phi:A\rightarrow B$ is an isomorphism of apartments fixing some set $X$, one writes $\phi:A\overset{X}{\rightarrow} B$. If $A$ and $B$ share a sector-germ $\q$, one denotes by $A\overset{A\cap B}{\rightarrow} B$ or by $A\overset{\q}{\rightarrow} B$ the unique isomorphism of apartments from $A$ to $B$ fixing $\q$ (and also $A\cap B$). We denote by $\I\overset{\q}{\rightarrow} A$ the retraction onto $A$ fixing $\q$. 
One denotes by $\rho_{+\infty}$ the retraction $\I\overset{+\infty}{\rightarrow } \A$ 
and by $\rho_{-\infty}$ the retraction $\I\overset{-\infty}{\rightarrow} \A$.

\subsubsection{Parallelism in $\I$ and building at infinity}\label{subsubParallelisme}
Let us explain briefly the notion of parallelism  in $\I$. This is done more completely in \cite{rousseau2011masures} Section 3.

Let us begin with rays. Let $\delta$ and $\delta'$ be two generic rays in $\I$. By (MA3) and \cite{rousseau2011masures} 2.2 3) there exists an apartment $A$ containing  sub-rays of $\delta$ and $\delta'$  and we say that $\delta$ and $\delta'$ are \textbf{parallel}, if these sub-rays are parallel in $A$. Parallelism is an equivalence relation and its equivalence classes are called \textbf{directions}.
Let $S$ be a sector of $\I$ and $A$ be an apartment containing $S$. One fixes the origin of $A$ in a base point of $S$. Let $\nu\in S$ and $\delta=\R_+\nu$. Then $\delta$ is a generic ray in $\I$. By Lemma 3.2 of \cite{hebertGK}, for all $x\in \I$, there exists a unique ray $x+\delta$ of direction $\delta$ and base point $x$. To obtain this ray, one can choose an apartment $A_x$ containing $x$ and a sub-ray $\delta'$ of  $\delta$, which is possible by (MA3) and \cite{rousseau2011masures} 2.2 3), and then we take the translate of $\delta'$ in $A_x$ having $x$ as a base point.

A \textbf{sector-face} $f$ of $\A$, is a set of the form $x+F^v$ for some vectorial face $F^v$ and some $x\in \A$. The germ $\F=germ_\infty(f)$ of this sector-face is the filter containing the elements of the form $q+f$, for some $q\in \overline{F^v}$. The sector-face $f$ is said to be spherical if $F^v\cap \mathring{\T}$ is nonempty. A \textbf{sector-panel} is a sector-face contained in a wall and spanning this one as an affine space. A sector-panel is spherical (see \cite{rousseau2011masures} 1).  We extend these notions to $\I$ thanks to the isomorphisms of apartments. Let us make a summary of the notion of parallelism for sector-faces. This is also more complete in \cite{rousseau2011masures}, 3.3.4)). 

If $f$ and $f'$ are two spherical sector-faces, there exists an apartment $B$ containing their germs $\F$ and $\F'$. One says that $f$ and $f'$ are parallel if there exists a vectorial face $F^v$ of $B$ such that  $\F=germ_\infty(x+F^v)$ and $\F'=germ_\infty(y+F^v)$ for some $x,y\in B$. Parallelism is an equivalence relation. The parallelism class of a sector-face germ $\F$ is denoted $\F^\infty$.  We denote by  $\I^\infty$ the set of directions of spherical faces of $\I$. 

By Proposition 4.7.1) of \cite{rousseau2011masures}, for all $x\in \I$ and all $\F^\infty\in \I^\infty$, there exists a unique sector-face $x+\F^\infty$ of direction $\F^\infty$ and with base point $x$. The existence can be obtained in the same way as for rays.

\subsubsection{Distance between apartments}\label{subDistance entre apparts}
Here we recall the notion of distance between apartments introduced in \cite{hebert2016distances}. It will often enable us to make inductions and to restrict our study to apartments sharing a sector. 
Let $\q$ and $\q'$ be two sector germs of $\I$ of the same sign $\epsilon$. By (MA4), there exists an apartment $B$ containing $\q$ and $\q'$. In $B$, there exists a minimal gallery between $\q$ and $\q'$ and the length of this gallery is called the distance between $\q$ and $\q'$. This does not depend on the choice of $B$. If $A'$ is an apartment of $\I$, the distance $d(A',\q)$ between $A'$ and $\q$ is the minimal possible distance between a sector-germ of $A'$ of sign $\epsilon$ and $\q$. If $A$ and $A'$ are apartments of $\I$ and $\epsilon\in \{-1,1\}$, the distance of sign $\epsilon$ between $A$ and $A'$ is the minimal possible distance between a sector-germ of sign $\epsilon$ of $A$ and a sector-germ of sign $\epsilon$ of $A'$. We denote it $d_\epsilon(A,A')$ or $d(A,A')$ if the sign is fixed. 

Let $\epsilon\in \{-,+\}$. Then $d_\epsilon$ is not a distance on the apartments of $\I$ because if $A$ is an apartment, all apartment $A'$ containing a sector of $A$ of sign $\epsilon$ (and there are many of them by (MA3)) satisfies $d_\epsilon(A,A')=0$.

\subsection{Notation}\label{subNotation}

 Let $X$ be a finite dimensional affine space. Let $C\subset X$ be a convex set and $A'$ be its support. The \textbf{relative interior} (resp. \textbf{relative frontier}) of $C$, denoted $\In_r (C)$ (resp. $\Fr_r(C)$) is the interior (resp. frontier) of $C$ seen as a subset of $A'$.  A set is said to be \textbf{relatively open} if it is open in its support.

If $X$ is an affine space and $U\subset X$, one denotes by $\conv(X)$ the convex hull of $X$. 
If $x,y\in \A$, we denote by $[x,y]$ the segment of $\A$ joining $x$ and $y$. If $A$ is an apartment and $x,y\in A$, we denote by $[x,y]_A$ the segment of $A$ joining $x$ and $y$.

If $X$ is a topological space and $a\in X$, one denotes by $\V_X(a)$ the set of open neighborhoods of $a$.

If $X$ is a subset of $\A$, one denotes by $\mathring{X}$ or by $\In (X)$ (depending on the readability) its interior. One denotes by $\Fr(X)$ the boundary (or frontier) of $X$: $\Fr(X)=\overline{X}\backslash \mathring{X}$.

If $X$ is a topological space,  $x\in X$ and $\Omega$ is a subset of $X$ containing $x$ in its closure, then the \textbf{germ} of $\Omega$ in $x$ is denoted $germ_x(\Omega)$.

We use the same notation as in \cite{rousseau2011masures} for segments and segment-germs in an affine space $X$. For example if $X=\R$ and $a,b\in \overline{\R}=\R\cup\{\pm\infty\}$, $[a,b]=\{x\in \overline{\R}|\ a\leq x\leq b\}$, $[a,b[=\{x\in \overline{\R}|\ a\leq x <b\}$, $[a,b)=germ_{a}([a,b])$ ... 

\section{General properties of the intersection of two apartments}\label{secProprietes generales des intersections d'apparts}

In this section, we study the intersection of two apartments, without assuming that their intersection contains a generic ray. 

In Subsection~\ref{subPreliminaires}, we extend results obtained for masure on which a group acts strongly transitively to our framework. 

In Subsection~\ref{subÉcriture de E comme union des Pi}, we write the intersection of two apartments as a finite union of enclosed subsets. 

In Subsection~\ref{subEnclosite d'une intersection convexe}, we use the results of Subsection~\ref{subÉcriture de E comme union des Pi} to prove that if the intersection of two apartments is convex, then it is enclosed. 

In Subsection~\ref{subExistence d'isomorphismes}, we study the existence of isomorphisms fixing subsets of an intersection of two apartments

\medskip

Let us sketch the proof of Theorem~\ref{ThmIntersection convexe intro}. The most difficult part is to prove that if $A$ and $B$ are apartments sharing a generic ray, then $A\cap B$ is convex. We first reduce our study to the case where $A\cap B$ has nonempty interior. We then parametrize the frontier of $A$ and $B$  by a map $\Fr:U\rightarrow \Fr (A\cap B)$, where $U$ is an open and convex set of $A$. The idea is then to prove that for ``almost'' all choices of $x,y\in U$, some map associated to $\Fr_{x,y}:t\in [0,1]\mapsto \Fr(tx+(1-t)y)$ is convex. An important step in this proof is the fact that $\Fr_{x,y}$ is piecewise affine and this relies on the decomposition of Subsection~\ref{subÉcriture de E comme union des Pi}.  The convexity of $A\cap B$ is obtained by using a density argument. We then conclude thanks to Subsection~\ref{subEnclosite d'une intersection convexe} and Subsection~\ref{subExistence d'isomorphismes}.

\subsection{Preliminaries}\label{subPreliminaires}
In this subsection, we extend some results of \cite{hebertGK} and \cite{hebert2016distances}, obtained for a masure on which a group acts strongly transitively to our framework.

\subsubsection{Splitting of apartments}

The following lemma generalizes Lemma 3.2 of \cite{hebert2016distances} to our frameworks:

\begin{lem}\label{lemIntersection de deux apparts contenant un demi-appart}
Let $A_1$ and $A_2$ be two distinct apartments such that $A_1\cap A_2$ contains a half-apartment. Then $A_1\cap A_2$ is a half-apartment.
\end{lem}

\begin{proof}
One identifies $A_1$ and $\A$. By the proof of Lemma 3.2 of \cite{hebert2016distances}, $D=A_1\cap A_2$ is a half-space of the form $D(\alpha,k)$ for some $\alpha\in \Phi$ and $k\in  \R$ (our terminology is not the same as in \cite{hebert2016distances} in which a half-apartment is a half-space of the form $D(\beta,\ell)$, with $\beta\in \Phi$ and $\ell\in \R$, whereas now, we ask moreover that $\ell\in \Lambda'_\beta$).  Let $F,F'$ be opposed sector-panels of $M(\alpha,k)$. Let $S$ be a sector of $D$ dominating $F$, $\s$ its germ and $\mathfrak{F}'$ be the germ of $F'$. By (MA4), one has $A_1\cap A_2\supset \cl(\mathfrak{F}',\s)$. But $\cl(\mathfrak{F}',\s)\supset\cl(D)\supset D =A_1\cap A_2$ and thus $k\in \Lambda'_\alpha$: $A_1\cap A_2$ is a half-apartment.
\end{proof}

As a consequence, one can use Lemma 3.6 and Proposition 3.7 of \cite{hebert2016distances} in our framework. We thus have the following proposition:

\begin{prop}\label{propDecoupages d'apparts}
Let $A$ be an apartment, $\q$ be a sector-germ of $\I$ such that $\q\nsubseteq A$ and $n=d(\q,A)$. \begin{enumerate}
\item\label{itemDecoupage en deux} One can write $A=D_1\cup D_2$, where $D_1$ and $D_2$ are opposite half-apartments of $A$ such that for all $i\in \{1,2\}$, there exists an apartment $A_i$ containing $D_i$ and such that $d(A_i,\q)=n-1$.

\item\label{itemDecoupage en parties encloses} There exist $k\in \N$, enclosed subsets $P_1,\ldots,P_k$ of $A$ such that for all $i\in \llbracket 1,k\rrbracket$, there exist an apartment $A_i$ containing $\q\cup P_i$ and  an isomorphism $\phi_i:A\overset{P_i}{\rightarrow} A_i$.

\end{enumerate}

\end{prop}

\begin{remark}\label{rqueSur les vrais murs}
The choice of the Weyl group $W$ (and thus of $Q^\vee$) imposes restrictions on the walls that can bound the intersection of two apartments. Let $A$ be an apartment and suppose that $A\cap \A=D(\alpha,k)$ for some $\alpha\in \Phi$ and $k\in \Lambda_\alpha'$. Then $k\in \frac{1}{2}\alpha(Q^\vee)$. Indeed, let $D=A\cap \A$, $D_1$ be the half-apartment of $\A$ opposed to $D$ and $D_2$ be the half-apartment of $A$ opposed to $D_1$.  By Proposition 2.9 2) of \cite{rousseau2011masures}  $B=D_1\cup D_2$ is an apartment of $\I$. Let $f:\A\overset{D}{\rightarrow} A$, $g:A\overset{D_2}{\rightarrow} B$ and $h:B\overset{D_1}{\rightarrow} \A$: these isomorphisms exist because two apartments sharing a half-apartment in particular share a sector, see~\ref{subsecRetractions}.  Let $s:\A\rightarrow \A$ making the following diagram commute: \[\xymatrix{ \A \ar[r]^{f} \ar[d]^{s} & A \ar[d]^{g} \\ \A \ar[r]^{h^{-1}} & B.}\]

 The map $s$ fixes $M(\alpha,k)$. Moreover, if $x\in \mathring{D}$, then $f(x)=x$, thus $g(f(x))\in \mathring{D}_1$ and hence $h^{-1}(g(f(x)))\in \mathring{D}_1$. We deduce $s\neq \Id$. The map $s$ is an isomorphism of apartments and thus $s\in W$. As $s$ fixes $M(\alpha,k)$, the vectorial part $\vec{s}$ of $s$ fixes $M(\alpha,0)$. As $W=W^v\ltimes Q^\vee$, one has $s=t\circ \vec{s}$, where $t$ is a translation of vector $q^\vee$ in $Q^\vee$. If $y\in M(\alpha,k)$, one has $\alpha(s(y))=k=\alpha(q^\vee)-k$  and therefore $k\in \frac{1}{2}\alpha(Q^\vee)$. This could enable to be more precise in Proposition~\ref{propDecoupages d'apparts}.
\end{remark}

\subsubsection{A characterization of the points of $\A$}\label{subsubCaracterisation des points de A}

The aim of this subsubsection is to extend Corollary 4.4 of \cite{hebertGK} to our framework.

\subsubsection*{Vectorial distance on $\I$} We recall the definition of the vectorial distance defined in Section 1.7 of \cite{gaussent2014spherical}. Let $x,y\in \I$ be such that $x\leq y$. Then there exists an apartment $A$ containing $x,y$ and an isomorphism $\phi:A\rightarrow\A$. One has $\phi(y)-\phi(x)\in \T$ and thus there exists $w\in W^v$ such that $\lambda=w.(\phi(y)-\phi(x))\in \overline{C^v_f}$. Then $\lambda$ does not depend on the choices we made, it is called the \textbf{vectorial distance between $x$ and $y$} and denoted $d^v(x,y)$. The vectorial distance is invariant under isomorphism of apartments: if $x,y$ are two points in an apartment $A$ such that $x\leq y$, if $B$ is an apartment and if $\phi:A\rightarrow B$ is an isomorphism of apartments, then $d^v(x,y)=d^v(\phi(x),\phi(y))$.

\subsubsection*{Image of a preordered segment by a retraction}

 In Theorem 6.2 of \cite{gaussent2008kac}, Gaussent and Rousseau give a very precise description of the image of a preordered segment by a retraction centered at a sector-germ. However they suppose that a group acts strongly transitively on $\I$. Without this assumption, they prove a simpler property of these images. We recall it here.

 Let $\lambda\in \overline{C^v_f}$. A \textbf{$\lambda$-path} $\pi$ in $\A$ is a map $\pi:[0,1]\rightarrow \A$ such that there exists $n\in \N$ and $0\leq t_1<\ldots< t_n\leq 1$ such that for all $i\in \llbracket 1,n-1\rrbracket$, $\pi$ is affine on $[t_i,t_{i+1}]$ and $\pi'(t)\in W^v.\lambda$ for all $t\in ]t_i,t_{i+1}[$.

\begin{lem}\label{lemImage d'un segment par une retraction}
 Let $A$ be an apartment of $\I$. Let $x,y\in A$ be such that $x\leq y$ and $\rho:\I\rightarrow \A$ be a retraction of $\I$ onto $\A$ centered at a sector-germ $\q$ of $\A$. Let $\tau:[0,1]\rightarrow A$ defined by $\tau(t)=(1-t)x+ty$ for all $t\in [0,1]$ and $\lambda=d^v(x,y)$. Then $\rho\circ \tau$ is a $\lambda$-path between $\rho(x)$ and $\rho(y)$.
 \end{lem}
 
 \begin{proof}
 We rewrite the proof of the beginning of Section 6 of \cite{gaussent2008kac}. Let $\phi:A\rightarrow \A$ be an isomorphism such that $\phi(y)-\phi(x)=\lambda$, which exists by definition of $d^v$. By the same reasoning as in the paragraph of \cite{gaussent2008kac} before Remark 4.6, there exist $n\in \N$, apartments $A_1,\ldots,A_n$ of $\I$ containing $\q$, $0 = t_1 <\ldots < t_n =1$  such that $\tau([t_i,t_{i+1}])\subset A_i$ for all $i\in \llbracket 1,n-1\rrbracket$.
 
Using Proposition 5.4 of \cite{rousseau2011masures}, for all $i\in \llbracket 1,n-1\rrbracket$, one chooses an isomorphism $\psi_i:A\overset{\tau([t_i,t_{i+1}])}{\rightarrow} A_i$. Let $\phi_i:A_i\overset{A_i\cap \A}{\rightarrow} \A$. For all $t\in [t_i,t_{i+1}]$, \[\rho(\tau(t))=\phi_i\circ \psi_i(\tau(t)).\] Moreover, $\phi_i\circ \psi_i:A\rightarrow \A$ and by (MA1), there exists $w_i\in W$ such that $\phi_i\circ \psi_i=w_i\circ \phi$. Therefore for all $t\in ]t_i,t_{i+1}[$, one has $(\rho\circ \tau)'(t)=w_i.\lambda$, which proves that $\rho\circ \tau$ is a $\lambda$-path.
 \end{proof}

\subsubsection*{The projection $y_\nu$} Let $\nu\in C^v_f$ and $\delta=\R^+\nu$. By  paragraph ``Definition of $y_\nu$ and $T_\nu$'' of \cite{hebertGK}, for all $x\in \I$, there exists $y_\nu(x)\in \A$ such that $x+\overline{\delta}\cap \A=y_\nu(x)+\overline{\delta}$, where $x+\overline{\delta}$ is the closure of $x+\delta$ (defined in~\ref{subsubParallelisme}) in any apartment containing it.

\subsubsection*{The $Q^\vee_\R$-order in $\A$} One sets $Q^\vee_{\R,+}=\sum_{\alpha\in \Phi^+}\R_+\alpha^\vee=\bigoplus_{i\in I}\R_+\alpha_i$. One has $Q^\vee_{\R,+}\subset \bigoplus_{i\in I}\R_+ \alpha_i^\vee$. If $x,y\in \A$, one denotes $x\leq _{Q^\vee} y$ if $y-x\in Q^\vee_{\R,+}$.

The following lemma is the writing of Proposition 3.12 d) of \cite{kac1994infinite} in our context. 

\begin{lem}\label{lem2.4 a de GR14}
Let $\lambda\in \overline{C^v_f}$ and $w\in W^v$. Then $w.\lambda\leq_{Q^\vee} \lambda$.
\end{lem}

If $x\in \A$ and $\lambda\in \overline{C^v_f}$, one defines $\pi_\lambda^a: [0,1]\rightarrow \A$ by $\pi^a_\lambda(t)=a+t\lambda$ for all $t\in [0,1]$.

\begin{lem}\label{lemLambda chemin de x à x+ lambda}
Let $\lambda\in \overline{C^v_f}$ and $a\in \A$. Then the unique $\lambda$-path from $a$ to $a+\lambda$ is $\pi_\lambda^a$.
\end{lem}

\begin{proof}
Let $\pi$ be a $\lambda$-path from $a$ to $a+\lambda$. One chooses a subdivision $0 = t_1<\ldots< t_n = 1$ of $[0,1]$ such that for all $i\in \llbracket 1,n-1\rrbracket$, there exists $w_i\in W^v$ such that $\pi_{|[t_i,t_{i+1}]}'(t)= w_i.\lambda$. By Lemma~\ref{lem2.4 a de GR14}, $w_i.\lambda \leq_{Q^\vee} \lambda$ for all $i\in \llbracket 1,n-1\rrbracket$.  Let $h:\bigoplus_{i\in I} \R\alpha_i^\vee\rightarrow \R$ defined by $h(\sum_{i\in I} u_i \alpha_i^\vee)=\sum_{i\in I} u_i$ for all $(u_i)\in \R^I$. Suppose that there exists $i\in \llbracket 1,n-1\rrbracket $ such that $w_i.\lambda\neq \lambda$. Then $h(w_i.\lambda -\lambda)<0$ and for all $j\in \llbracket 1,n-1\rrbracket$, $h(w_j.\lambda-\lambda)\leq 0$.  By integrating, we get that  $h(0)<0$: a contradiction. Therefore $\pi(t)=a+t\lambda=\pi^a_\lambda(t)$ for all $t\in [0,1]$, which is our assertion.
\end{proof}

The following proposition corresponds to Corollary 4.4 of \cite{hebertGK}.

\begin{prop}\label{propCaracterisation des points de l'appartement standard}
Let $x\in \I$ be such that $\rho_{+\infty}(x)=\rho_{-\infty}(x)$. Then $x\in \A$.
\end{prop}

\begin{proof}
Let $x\in \I$ such that $\rho_{+\infty}(x)=\rho_{-\infty}(x)$. Suppose that $x\in \I\backslash \A$. By Lemma 3.5 a) of \cite{hebertGK}, one has $x\leq y_\nu (x)$ and $d^v(x,y_\nu(x))=\lambda$, with $\lambda= y_\nu(x)-\rho_{+\infty}(x)  \in \R^*_+\nu$. Let $A$ be an apartment containing $x$ and $+\infty$, which exists by (MA3). Let $\tau:[0,1]\rightarrow A$ be defined by $\tau(t)=(1-t)x+ty_\nu(x)$ for all $t\in [0,1]$ (this does not depend on the choice of $A$ by Proposition 5.4 of \cite{rousseau2011masures}) and $\pi=\rho_{-\infty}\circ \tau$. Then by Lemma~\ref{lemImage d'un segment par une retraction}, $\pi$ is a $\lambda$-path from $\rho_{-\infty}(x)=\rho_{+\infty}(x)$ to $y_\nu(x)=\rho_{+\infty}(x)+\lambda$.

By Lemma~\ref{lemLambda chemin de x à x+ lambda}, $\pi(t)=\rho_{+\infty}(x)+t\lambda$ for all $t\in [0,1]$.  By Lemma 3.6 of \cite{hebertGK}, $\tau([0,1])\subset \A$. Thus $x=\tau(0)\in \A$: this is absurd. Therefore $x\in \A$, which is our assertion.
\end{proof}

\subsubsection{Topological considerations on apartments}

The following proposition generalizes Corollary 5.9 (ii) of \cite{hebert2016distances} to our framework.

\begin{prop}\label{propRetractions continues}
Let $\q$ be a sector-germ of $\I$ and $A$ be an apartment of $\I$. Let $\rho:\I\overset{\q}{\rightarrow} \A$. Then $\rho_{|A}:A\rightarrow \A$ is continuous (for the affine topologies on $A$ and $\A$).
\end{prop}

\begin{proof}
Using Proposition~\ref{propDecoupages d'apparts}~\ref{itemDecoupage en parties encloses}, one writes $A=\bigcup_{i=1}^n P_i$ where the $P_i$'s are closed subsets of $A$ such that for all $i\in \llbracket 1,n\rrbracket$, there exists an apartment $A_i$ containing $P_i$ and $\q$ and an isomorphism $\psi_i:A\overset{P_i}{\rightarrow} A_i$. For all $i\in \llbracket 1,n\rrbracket$, one denotes by $\phi_i$ the isomorphism $A_i\overset{\q}{\rightarrow} \A$. Then  $\rho_{|P_i}=\phi_i\circ \psi_{i|P_i}$ for all $i\in \llbracket 1,n\rrbracket$. 

Let $(x_k)\in A^\N$ be a converging sequence and $x=\lim x_k$. Then for all $k \in \N$, $\rho(x_k)\in \{\phi_i\circ \psi_i (x_k)|\ i\in \llbracket 1,n\rrbracket\}$ and thus $(\rho(x_n))$ is bounded. Let $(x_{\sigma(k)})$ be a subsequence of $(x_k)$ such that $(\rho(x_{\sigma(k)})$ converges. Maybe extracting a subsequence of $(x_{\sigma(k)})$, one can suppose that there exists $i\in \llbracket 1,n\rrbracket$ such  $x_{\sigma(k)}\in P_i$ for all $k\in \N$. One has $\big(\rho(x_{\sigma(k)})\big)=(\phi_i\circ \psi_i (x_{\sigma(k)}))$ and thus $\rho(x_{\sigma(k)})\rightarrow \phi_i\circ \psi_i(x)=\rho(x)$ (because $P_i$ is closed) and thus $(\rho(x_k))$ converges towards $\rho(x)$. Hence $\rho_{|A}$ is continuous.
\end{proof}

The following proposition generalizes Corollary 5.10 of \cite{hebert2016distances} to our context.

\begin{prop}\label{propIntersection fermee}
Let $A$ be an apartment. Then $A\cap \A$ is closed.
\end{prop}

\begin{proof}
By Proposition~\ref{propCaracterisation des points de l'appartement standard}, $A\cap \A=\{x\in A|\ \rho_{+\infty}(x)=\rho_{-\infty}(x)\}$, which is closed by Proposition~\ref{propRetractions continues}.
\end{proof}

\subsection{Decomposition of the intersection of two apartments into enclosed subsets}\label{subÉcriture de E comme union des Pi}
 
The aim of this subsection is to show that  $\A\cap A$ is a finite union of enclosed subsets of $\A$. 

 We first suppose that $A$ and $\A$ share a sector. One can suppose that $+\infty \subset A\cap \A$. By Proposition~\ref{propDecoupages d'apparts}, one has $A=\bigcup_{i=1}^k P_i$, for some $k\in \N$, where the $P_i$'s are enclosed and $P_i,-\infty$  is contained in some apartment $A_i$ for all $i\in \llbracket 1,k\rrbracket$.

\begin{lem}\label{lemUnion des Pi dont l'interieur est non vide}
Let $X$ be a finite dimensional affine space. Let $U\subset X$ be a set such that $U\subset \overline{\mathring{U}}$ and suppose that $U=\bigcup_{i=1}^n U_i$, where for all $i\in \llbracket 1,n\rrbracket$ the set $U_i$ is the intersection of $U$ and of a finite number of half-spaces. Let $J=\{j\in \llbracket 1,n\rrbracket | \mathring{U_j}\neq  \emptyset \}$. Then $U=\bigcup_{j\in J} U_j$.
\end{lem}

\begin{proof}
Let $j\in \llbracket 1,n\rrbracket$. Then $\Fr (U_j)\cap \mathring{U}$ is contained in a finite number of hyperplanes.  Therefore, if one chooses a Lebesgue measure on $X$, the set $\bigcup_{i\in \llbracket 1,n\rrbracket}\mathring{U}\cap \Fr (U_i)$ has measure $0$ and thus $\mathring{U}\backslash \bigcup_{i\in \llbracket 1,n\rrbracket}\Fr (U_i)$ is dense in $\mathring{U}$ and thus in $U$. Let $x\in U$. Let \[(x_k)\in (\mathring{U}\backslash \bigcup_{i\in \llbracket 1,n\rrbracket}\Fr (U_i))^\N\] be such that $(x_k)$ converges towards $x$. Extracting a sequence if necessary, one can suppose that there exists $i\in \llbracket 1,n\rrbracket$ such that $x_k\in U_i$ for all $k\in \N$. By definition of the frontier, $x_k\in \mathring{U_i}$ for all $k\in \N$. As $U_i$ is closed in $U$, $x\in U_i$ and the lemma follows.
\end{proof}

\begin{lem}\label{lemCondition d'inclusion des Pi}
Let $i\in \llbracket 1, k\rrbracket$ be such that $A\cap \A\cap P_i$ has nonempty interior in $\A$. Then $A\cap \A\supset P_i$.
\end{lem}

\begin{proof}
One chooses an apartment $A_i$ containing $P_i,-\infty$ and $\phi_i:A\overset{P_i}{\rightarrow} A_i$ . Let $\psi_i:A_i\overset{A_i\cap \A}\rightarrow \A$ ($\psi_i$ exists and is unique by Subsection~\ref{subsecRetractions}). Let $x\in P_i$. By definition of $\rho_{-\infty}$, one has $\rho_{-\infty}(x)=\psi_i(x)$ and thus $\rho_{-\infty}(x)=\psi_i\circ \phi_i(x)$. 

Let $f:A\overset{A\cap\A}{\rightarrow} \A$. One has $\rho_{+\infty}(x)=f(x)$ for all $x\in A$. By Proposition~\ref{propCaracterisation des points de l'appartement standard}, \[A\cap \A\cap P_i=\{x\in P_i|\rho_{+\infty}(x)=\rho_{-\infty}(x)\}=P_i\cap (f-\psi_i\circ \phi_i)^{-1}(\{0\}).\]

As $f-\psi_i\circ \phi_i$ is affine, $(f-\psi_i\circ \phi_i)^{-1}(\{0\})$ is an affine subspace of $A$ and as it has nonempty interior, $(f-\psi_i\circ \phi_i)^{-1}(\{0\})=A$. Therefore $P_i\subset \A\cap A$.

\end{proof}

We recall the definition of $x+\infty$, if $x\in \I$ (see~\ref{subsubParallelisme}). Let $x\in \I$ and $B$ be an apartment containing $x$ and $+\infty$. Let $S$ be a sector of $\A$, parallel to $C^v_f$ and such that $S\subset A\cap \A$. Then $x+\infty$ is the sector of $A$ based at $x$ and parallel to $S$. This does not depend on the choice of $A$.

\begin{lem}\label{lemIntersection adherence de son int quartier}
One has $A\cap \A=\overline{\In(A\cap \A)}$.
\end{lem}

\begin{proof}
By Proposition~\ref{propIntersection fermee}, $A\cap \A$ is closed and thus $\overline{\In(A\cap \A)}\subset A\cap \A$.

Let $x\in A\cap \A$. By (MA4), one has $x+\infty \subset A\cap \A$. The fact that there exists $(x_n)\in \In(x+\infty)^\N$ such that $x_n\rightarrow x$ proves the lemma.
\end{proof}

\begin{lem}\label{lemÉcriture de E comme reunion de Pi}
Let  $J=\{i\in \llbracket 1,k\rrbracket|\ \In_\A(P_i\cap A\cap \A)\neq \emptyset\}$. Then $A\cap \A=\bigcup_{j\in J} P_j$. 
\end{lem}

\begin{proof}
Let $U=A\cap\A$. Then by Lemma~\ref{lemIntersection adherence de son int quartier} and Lemma~\ref{lemUnion des Pi dont l'interieur est non vide}, $U=\bigcup_{j\in J} U\cap P_j$ and Lemma~\ref{lemCondition d'inclusion des Pi} completes the proof.
\end{proof}

We no more suppose that $A$ contains $+\infty$. We say that $\bigcup_{i=1}^k P_i$ is a  \textbf{decomposition of $A\cap \A$ into enclosed subsets} if:

 \begin{enumerate}
\item $k\in \N$ and for all $i\in \llbracket 1,k\rrbracket$, $P_i$ is enclosed

\item $A\cap \A=\bigcup_{i=1}^k P_i$

\item for all $i\in \llbracket 1,k\rrbracket$, there exists an isomorphism $\phi_i:\A\overset{P_i}{\rightarrow} A$.

\end{enumerate}

\begin{prop}\label{propÉcriture de l'intersection comme union des Pi, cas general}
Let $A$ be an apartment. Then there exists a decomposition $\bigcup_{i=1}^k P_i$ of $A\cap \A$ into enclosed subsets.

 As a consequence there exists a finite set $\mathcal{M}$ of walls such that $\Fr(A\cap \A)\subset \bigcup_{M\in \M} M$.

If moreover $A\cap \A$ is convex, one has $A\cap \A=\bigcup_{j\in J} P_j$, where $J=\{j\in \llbracket 1,k\rrbracket | \ \supp(P_j)=\supp(A\cap\A)\}$.
\end{prop} 

\begin{proof}
Let $n\in \N$ and $\mathcal{P}_n$: ``for all apartment $B$ such that $d(B,\A)\leq n$, there exists a decomposition  $\bigcup_{i=1}^\ell Q_i$ of $\A\cap B$ into enclosed subsets''. The property $\mathcal{P}_0$ is true by Lemma~\ref{lemÉcriture de E comme reunion de Pi}. Let $n\in \N$ and suppose that $\mathcal{P}_n$ is true. Suppose that there exists an apartment $B$ such that $d(B,\A)=n+1$. Using Proposition~\ref{propDecoupages d'apparts}, one writes $B=D_1\cup D_2$ where $D_1$, $D_2$ are opposite half-apartments such that for all $i\in \{1,2\}$, $D_i$ is contained in a apartment $B_i$ satisfying $d(B_i,\A)=n$. If $i\in \{1,2\}$, one writes $B_i\cap \A=\bigcup_{j=1}^{\ell_i} Q_j^i$, where $\ell_i\in \N$, the $Q_j^i$'s are enclosed and there exists an isomorphism $\psi_j^i:B_i\overset{Q_j^i}{\rightarrow}\A$. Then \[B\cap \A=\bigcup_{j=1}^{\ell_1} (D_1\cap Q_j^1) \cup \bigcup_{j=1}^{\ell_2} (D_2\cap Q_j^2).\] If $i\in \{1,2\}$, one denotes by $f^i$ the isomorphism $B\overset{D_i}{\rightarrow} B_i$. Then if $j\in \llbracket 1, \ell_i\rrbracket$, the isomorphism $\psi_j^i\circ f^i$ fixes $Q_j^i\cap D_i$ and thus $\mathcal{P}_{n+1}$ is true.

Therefore $A\cap \A=\bigcup_{i=1}^k P_i$ where the $P_i$'s are enclosed. One has $\Fr(A\cap \A)\subset \bigcup_{i=1}^k \Fr (P_i)$, which is contained in a finite union of walls.

Suppose that $A\cap \A$ is convex. Let  $X=\supp(A\cap \A)$. By Lemma~\ref{lemUnion des Pi dont l'interieur est non vide} applied with $U=A\cap \A$, \[A\cap \A=\bigcup_{i\in \llbracket1,k\rrbracket,\ \In_X(P_i)\neq \emptyset }P_i,\]

which completes the proof.
\end{proof}

\subsection{Encloseness of a convex intersection}\label{subEnclosite d'une intersection convexe}
In this subsection, we prove Proposition~\ref{propIntersection enclose quand convexe}: if $A$ is an apartment such that $A\cap \A$ is convex, then $A\cap \A$ is enclosed. For this we study the ``gauge'' of $A\cap \A$, which is a map parameterizing the frontier of $A\cap \A$.

\begin{lem}\label{lemSupport d'une partie enclose}
Let $A$ be a finite dimensional affine space, $k\in \Ne$ and  $D_1,\ldots,D_k$ be half-spaces of $A$ and $M_1,\ldots, M_k$ be their hyperplanes. Then their exists $J\subset \llbracket 1,k\rrbracket$ (maybe empty) such that $\mathrm{supp}(\bigcap_{i=1}^k D_i) =\bigcap_{j\in J} M_j$
\end{lem}

\begin{proof}
Let $d\in \N^*$ and $\ell\in \N$. Let $\mathcal{P}_{d,\ell}$:``for all affine space $X$ such that $\dim X\leq d$ and for all half-spaces $E_1,\ldots, E_\ell$ of $X$, there exists $J\subset \llbracket 1,\ell \rrbracket $ such that $\mathrm{supp}( \bigcap_{i=1}^\ell E_i)=\bigcap_{j\in J} H_j$ where for all $j\in J$, $H_j$ is the hyperplane of $E_j$''.

It is clear that for all $\ell\in \N$, $\mathcal{P}_{1,\ell}$ is true and that for all $d\in \N$, $\mathcal{P}_{d,0}$ and $\mathcal{P}_{d,1}$ is true. Let $d\in \N_{\geq 2}$ and $\ell\in \N$ and suppose that (for all $d'\leq d-1$ and $\ell'\in \N$, $\mathcal{P}_{d',\ell'}$ is true) and that (for all $\ell'\in \llbracket 0,\ell\rrbracket$, $\mathcal{P}_{d,\ell'}$ is true). 

Let $X$ be a $d$ dimensional affine space, $E_1,\ldots,E_{\ell+1}$ be half-spaces of $X$ and $H_1,\ldots, H_{\ell+1}$ be their hyperplanes. Let $L=\bigcap_{j=1}^{\ell} E_j$ and $S=\mathrm{supp}\ L$. Then $E_{\ell+1}\cap S$ is either $S$ or a half-space of $S$. In the first case, $E_{\ell+1}\supset S\supset L$, thus $\bigcap_{i=1}^{\ell+1} E_i=L$ and thus by $\mathcal{P}_{d,\ell}$, $\mathrm{supp}(\bigcap_{i=1}^{\ell+1} E_i)=\bigcap_{j\in J} H_j$  for some $J\subset \llbracket 1,\ell\rrbracket$.

Suppose that $E_{\ell+1}\cap S$ is a half-space of $S$. Then either $\mathring{E}_{\ell+1}\cap L\neq \emptyset$ or $\mathring{E}_{\ell+1}\cap L=\emptyset$. In the first case, one chooses $x\in \mathring{E}_{\ell+1}\cap L$ and  a sequence $(x_n)\in (\In_r (L))^\N$ converging towards $x$. Then for $n\gg 0$, $x_n\in  \mathring{E}_{\ell+1}\cap \In_r(L)$. Consequently,  $L\cap E_{\ell+1}$ has nonempty interior in $S$. Thus $\mathrm{supp}(\bigcap_{i=1}^{\ell+1} E_i)=S$ and  by $\mathcal{P}_{d,\ell}$, $\mathrm{supp}(\bigcap_{i=1}^{\ell+1} E_i)=\bigcap_{j\in J} H_j$  for some $J\subset \llbracket 1,\ell\rrbracket$.

Suppose now that $\mathring{E}_{\ell+1} \cap L$ is empty. Then $L\cap E_{\ell+1}\subset H_{\ell+1}$, where $H_{\ell+1}$ is the hyperplane of $E_{\ell+1}$. Therefore $\bigcap_{i=1}^{\ell+1} E_i= \bigcap_{i=1}^{\ell+1} (E_i\cap H_{\ell+1})$  and thus by $\mathcal{P}_{d-1,\ell+1}$, $\mathrm{supp}(\bigcap_{i=1}^{\ell+1} E_i)=\bigcap_{j\in J} H_j$  for some $J\subset \llbracket 1,\ell+1\rrbracket$.

\end{proof}

\begin{lem}\label{lemSupport d'une intersection convexe}
Let $A$ be an apartment such that $A\cap \A$ is convex. Then $\supp (A\cap \A)$ is enclosed.
\end{lem}

\begin{proof}
Using Proposition~\ref{propÉcriture de l'intersection comme union des Pi, cas general},  one writes $A\cap \A=\bigcup_{i=1}^k P_i$, where the $P_i$'s are enclosed and $\supp(P_i) =\supp (A\cap \A)$ for all $i\in \llbracket 1,k\rrbracket$. By Lemma~\ref{lemSupport d'une partie enclose}, if $i\in \llbracket 1,k\rrbracket$, then $\supp (P_i)$ is a finite intersection of walls, which proves the lemma.
\end{proof}

\subsubsection*{Gauge of a convex}

Let $A$ be a finite dimensional affine space. Let $C$ be a closed and convex  subset of $A$ with nonempty interior. One chooses $x\in \mathring{C}$ and one fixes the origin of $A$ in $x$. Let $j_{C,x}:A\rightarrow \R_+\cup\{+\infty\}$ defined by \[j_{C,x}(s)=\inf \{t\in\R^*_+|s\in tC\}.\] The map $j_{C,x}$ is called the \textbf{gauge} of $C$ based at $x$. In the sequel, we will fix some $x\in \mathring{C}$ and we will denote $j_C$ instead of $j_{C,x}$. Then by Theorem 1.2.5 of \cite{hiriart2012fundamentals} and discussion at the end of Section 1.2 of loc cit, $j_C(A)\subset \R_+$ and $j_C$ is continuous.

The following lemma is easy to prove: 

\begin{lem}\label{lemDescription d'un convexe par sa jauge}
Let $C$ be a convex closed set with nonempty interior. Fix the origin of $A$ in a point of  $\mathring{C}$. Then $C=\{x\in A| j_C(x)\leq 1\}$ and $\mathring{C}=\{x\in A| j_C(x) <1\}$.
\end{lem}

\begin{lem}\label{lemDescription de la frontiere par jauge}
Let $C$ be a convex closed set with nonempty interior. Fix the origin of $A$ in $\mathring{C}$. Let $U=U_C=\{s\in A|\ j_C(s)\neq 0\}$. Let $\Fr=\Fr_C:U\rightarrow \Fr (C)$ defined by $\Fr(s)=\frac{s}{j_C(s)}$ for all $s\in U$. Then $\Fr$ is well defined, continuous and surjective.
\end{lem}

\begin{proof}
If $s\in U$, then $j_C(\Fr(s))=\frac{j_C(s)}{j_C(s)}=1$ and thus $\Fr$ takes it values in $\Fr (C)$ by Lemma~\ref{lemDescription d'un convexe par sa jauge}. The continuity of $\Fr$ is a consequence of the one of $j_C$. 

Let $f\in \Fr(C)$. Then $\Fr(f)=f$ and thus $\Fr$ is surjective. 

\end{proof}

Let $A$ be an apartment such that $A\cap \A$ is convex and nonempty. Let $X$ be the support of $A\cap \A$ in $\A$. By Lemma~\ref{lemSupport d'une intersection convexe}, if $A\cap \A=X$, then  $A\cap \A$ is enclosed. One now supposes that $A\cap \A\neq X$. One chooses $x_0\in \In_X(A\cap \A)$ and consider it as the origin of $\A$. One defines $U=U_{A\cap \A}$ and $\Fr: U\rightarrow \Fr_r(A\cap \A)$ as in Lemma~\ref{lemDescription de la frontiere par jauge}. The set  $U$ is open and nonempty. Using Proposition~\ref{propÉcriture de l'intersection comme union des Pi, cas general},  one writes $A\cap \A=\bigcup_{i=1}^r P_i$, where $r\in \N$, the $P_i$'s are enclosed and $\supp( P_i) =X$ for all $i\in \llbracket 1,r\rrbracket$.  Let $M_1,\ldots, M_k$ be distinct walls not containing $X$ such that $\Fr_r(A\cap \A)\subset \bigcup_{i=1}^k M_i$, which exists because the $P_i$'s are intersections of half-spaces of $X$ and $A\cap \A\neq X$. Let $\M=\{M_i\cap X|i\in \llbracket 1,k\rrbracket\}$. If $M\in \M$, one sets $U_M=\Fr^{-1}(M)$.

\begin{lem}\label{lemDensite de U'}
Let $U'=\{x\in U|\exists (M,V)\in \M\times \V_U(x) |\Fr(V)\subset M\}$. Then $U'$ is dense in $U$. 
\end{lem}

\begin{proof}
 Let $M\in \M$. By Lemma~\ref{lemDescription de la frontiere par jauge}, $U_M$ is closed in $U$. Let $V'\subset U$ be nonempty and open. Then $V'=\bigcup_{M\in \M} U_M\cap V'$. As $\M$ is finite, we can apply Baire's Theorem, and there exists $M\in \M$ such that $V'\cap U_M$ has nonempty interior and hence $U'$ is dense in $U$.
\end{proof}

\begin{lem}\label{lemIndependance mur associe à un point}
Let $x\in U'$ and $V\in \V_U(x)$ be such that $\Fr(V)\subset M$ for some $M\in \M$. The wall $M$ is unique and does not depend on $V$. 	
\end{lem}

\begin{proof}
Suppose that $\Fr(V)\subset M\cap M'$, where $M,M'$ are hyperplanes of $X$. Let $\alpha,\alpha'\in \Phi$, $k,k'\in \R$ be such that $M=\alpha^{-1}(\{k\})$ and $M'=\alpha'^{-1}(\{k'\})$. By definition of $U$, for all $y\in V$, $\Fr(y)=\lambda(y) y$ for some $\lambda(y)\in \R^*_+$. Suppose that $k=0$. Then $\alpha(y)=0$ for all $y\in V$, which is absurd because $\alpha\neq 0$. By the same reasoning, $k'\neq 0$. 

 If $y\in V\backslash \big(\alpha^{-1}(\{0\})\cup \alpha'^{-1}(\{0\})\big)$, $\Fr(y)=\lambda(y)y$ for some $\lambda(y)\in \R^*_+$ and thus $\Fr(y)=\frac{k}{\alpha(y)}y=\frac{k'}{\alpha'(y)}y$. As $V\backslash \big(\alpha^{-1}(\{0\})\cup \alpha'^{-1}(\{0\})\big)$ is dense in $V$, $k\alpha'(y)=k'\alpha(y)$ for all $y\in V$ and thus $M$ and $M'$ are parallel. Therefore $M=M'$. It remains to show that $M$ does not depend on $V$. Let $V_1 \in \V_U(x)$ be such that $\Fr(V_1)\subset M_1$ for some $M_1\in \M$. By the uniqueness we just proved applied to $V\cap V_1$, one has $M=M_1$, which completes the proof.
\end{proof}

If $x\in U'$, one denotes by $M_x$ the wall defined by Lemma~\ref{lemIndependance mur associe à un point}.

\begin{lem}\label{lemInclusion dans un demi-apart}
Let $x\in U'$ and $D_1$, $D_2$ be the two half-spaces of $X$ defined by $M_x$. Then either $A\cap \A\subset D_1$ or $A\cap \A\subset D_2$.
\end{lem}

\begin{proof}
Let $V\in \V_U(x)$ be such that $\Fr(V)\subset M_x$. Let us prove that $\Fr(V)=\R^*_+V\cap M_x$.  As $\Fr(y)\in \R^*_+ y$ for all $y\in V$, $\Fr(V)\subset \R^*_+ V\cap M_x$.  Let $f$ be a linear form on $X$ such that $M_x=f^{-1}(\{k\})$ for some $k\in \R$. If $k=0$, then $f(v)=0$ for all $v\in V$, and thus $f=0$: this is absurd. Hence $k\neq 0$.  

Let $a\in \R^*_+ V\cap M_x$. One has $a=\lambda \Fr(v)$, for some $\lambda\in \R^*_+$ and $v\in V$. Moreover $f(\Fr(v))=k=f(a)$ and as $k\neq 0$, $a=\Fr(v)\in \Fr(V)$. Thus $\Fr(V)= \R^*_+ V\cap M_x$  and $\Fr(V)$ is an open set of $M_x$. Suppose there exists $(x_1,x_2)\in (\mathring{D_1}\cap A\cap \A)\times (\mathring{D}_2 \cap A\cap \A)$. Then $\conv(x_1,x_2,\Fr(V))\subset A\cap \A$ is an open neighborhood of $\Fr(V)$ in $X$. This is absurd because $\Fr$ takes it values in $\Fr_r(A\cap \A)$. Thus the lemma is proved.
\end{proof}

If $x\in U'$, one denotes by $D_x$ the half-space delimited by $M_x$ and containing $A\cap \A$.

\begin{prop}\label{propIntersection enclose quand convexe}
Let $A$ be an apartment such that $A\cap \A$ is convex. Then $A\cap \A$ is enclosed.
\end{prop}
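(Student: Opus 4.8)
The plan is to identify $A\cap\A$ with a finite intersection of half-apartments assembled from the data built above. If $A\cap\A=X$ we are already done by Lemma~\ref{lemSupport d'une intersection convexe}, so I assume $A\cap\A\neq X$ and keep the notation $x_0\in\In_X(A\cap\A)$ (taken as origin), the gauge $j:=j_{A\cap\A}$, the surjection $\Fr$, the dense set $U'\subset U$, and the half-spaces $D_x$ and walls $M_x$ for $x\in U'$. The decisive finiteness remark is that by Lemma~\ref{lemIndépendance mur associé à un point} each $M_x$ lies in the finite set $\M$, so the family $\{D_x\mid x\in U'\}$ is finite. I would then set $C=X\cap\bigcap_{x\in U'}D_x$. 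Since each $D_x$ is the trace on $X$ of a half-apartment of $\A$ (its boundary $M_x$ being the trace of a wall of $\A$) and $X$ is enclosed by Lemma~\ref{lemSupport d'une intersection convexe}, the set $C$ is a finite intersection of half-apartments of $\A$, hence enclosed. It therefore suffices to prove $A\cap\A=C$.

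The inclusion $A\cap\A\subset C$ is immediate: each $D_x$ contains $A\cap\A$ by Lemma~\ref{lemInclusion dans un demi-apart}, and $A\cap\A\subset X$. The reverse inclusion is the heart of the argument, and I would establish it by contradiction. Suppose $p\in C\setminus(A\cap\A)$. As $A\cap\A$ is closed (Proposition~\ref{propIntersection fermée}) and equals $\{j\leq 1\}$ by Lemma~\ref{lemDescription d'un convexe par sa jauge}, we have $j(p)>1$, so $p\in U$. By density (Lemma~\ref{lemDensité de U'}) pick $p_n\in U'$ with $p_n\to p$; then $j(p_n)\to j(p)>1$ by continuity of $j$. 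Because $\{M_{p_n}\}\subset\M$ is finite, I pass to a subsequence along which $M_{p_n}$ is a fixed wall $M$; since $M$ does not contain $A\cap\A$, the associated half-space $D_{p_n}$ is then the fixed half-space $D_M$ bounded by $M$ and containing $A\cap\A$, and $x_0$ (a relative interior point of $A\cap\A$) lies in its interior.

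To reach the contradiction I would write $D_M=\{\psi\geq 0\}$ with $\psi$ affine, $M=\{\psi=0\}$ and $b:=\psi(x_0)>0$, and decompose $\psi=\ell+b$ with $\ell$ the linear part based at $x_0$. For each $n$, $\Fr(p_n)=p_n/j(p_n)$ lies on $M_{p_n}=M$, so $\psi(p_n/j(p_n))=0$, i.e. $\ell(p_n)=-b\,j(p_n)$. Letting $n\to\infty$ gives $\ell(p)=-b\,j(p)$, whence $\psi(p)=\ell(p)+b=b\,(1-j(p))<0$ because $j(p)>1$; thus $p\notin D_M$, contradicting $p\in C\subset D_{p_n}=D_M$. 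This proves $C\subset A\cap\A$ and finishes the proof. The main obstacle is precisely this reverse inclusion: a priori the walls of $\M$ only cover the relative frontier, and one has to see that the finitely many supporting half-spaces they provide already cut out the convex set. The two points that make it go through are the density of $U'$, which lets me approximate $p$ by points whose local frontier behaviour pins down a single wall, and the continuity of the gauge, which carries the tangency equality $\psi(\Fr(p_n))=0$ to the limit; the finiteness of $\M$ is what permits the passage to a fixed wall.
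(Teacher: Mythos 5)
Your proof is correct and takes essentially the same route as the paper: both establish the identity $A\cap\A=X\cap\bigcap_{x\in U'}D_x$ and then deduce enclosedness from the finiteness of $\M$ together with the enclosedness of $X$ (Lemma~\ref{lemSupport d'une intersection convexe}). The only difference is in the proof of the inclusion $\bigcap_{x\in U'}D_x\subset A\cap\A$: the paper first treats points of $U'$ (by the same gauge computation you use, phrased there as $x\in[0,\Fr(x)]$), then reaches interior points by density of $U'$ and closedness of $A\cap\A$, and finally passes to closures, whereas you compress these steps into a single contradiction argument via a constant-wall subsequence and the continuity of the gauge.
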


\begin{proof}

If $u\in U'$, then $A\cap \A\subset D_u$ and thus $A\cap \A\subset \bigcap_{u\in U'} D_u$. 

Let $x\in U'\cap \bigcap_{u\in U'} D_u$. One has $0\in A\cap \A$ and thus $0\in D_x$. Moreover $\Fr(x)\in M_x\cap A\cap \A$ and thus $x\in [0,\Fr(x)]\subset A\cap \A$. Therefore \[U'\cap \bigcap_{x\in U'} D_x\subset A\cap \A.\]

Let $x\in \In_X (\bigcap_{u\in U'}D_u)$. If $x\notin U$, then $x\in A\cap \A$. Suppose $x\in U$. Then by Lemma~\ref{lemDensite de U'}, there exists $(x_n)\in (U'\cap \In_X (\bigcap_{u\in U'}D_u) )^\N$ such that  $x_n\rightarrow x$. But then for all $n\in \N$, $x_n\in A\cap \A$ and by Proposition~\ref{propIntersection fermee}, $x\in A\cap \A$. As a consequence, $A\cap \A \supset \In_X (\bigcap_{u\in U'}D_u)$. As $A\cap \A$ is closed, \[A\cap \A \supset \overline{\In_X (\bigcap_{u\in U'}D_u)}=\bigcap_{u\in U'}D_u\] because $\bigcap_{u\in U'}D_u$ is closed, convex with nonempty interior in $X$. Thus we have proved $A\cap \A=\bigcap_{u\in U'}D_u$. 

Let $M_1',\ldots,M_k'$ be walls of $\A$ such that for all $x\in U'$, there exists $i(x)\in \llbracket 1,k\rrbracket$ such that $M_{i(x)}'\cap X=M_x$. One sets $M'_x=M'_{i(x)}$ for all $x\in U'$ and one denotes by $D_x'$ the half-apartment of $\A$ delimited by $M_x'$ and containing $D_x$. Then $X\cap \bigcap_{x\in U'} D_x'=A\cap \A$. Lemma~\ref{lemSupport d'une intersection convexe} completes the proof. 

\end{proof}

\subsection{Existence of isomorphisms of apartments fixing a convex set}\label{subExistence d'isomorphismes}

Let $A$ be an apartment and $P\subset \A\cap A$. In this section, we study the existence of isomorphisms of apartments $\A\overset{P}{\rightarrow}A$. We give a sufficient condition of existence of such an isomorphism in Proposition~\ref{propIsomorphisms fixant une partie convexe}. The existence of an isomorphism $A\overset{A\cap \A}{\rightarrow} \A$ when $A$ and $\A$ share a generic ray will be a particular case of this Proposition, see Theorem~\ref{thmIntersection enclose fort}. In the affine case, this will be a first step to prove that for all apartment $A$, there exists an isomorphism $A\overset{A\cap \A}{\rightarrow} \A$.

\begin{lem}\label{lemIso fixant le cone engendre}
Let $A$ be an apartment of $\I$ and $\phi:\A\rightarrow A$ be an isomorphism of apartments. Let $P\subset \A\cap A$ be a nonempty relatively open convex set, $Z=\supp(P)$ and suppose that $\phi$ fixes $P$. Then $\phi$ fixes $P+(\T\cap \vec{Z})\cap A$, where $\T$ is the Tits cone.
\end{lem}

\begin{proof}
Let $x\in P+(\T\cap \vec{Z})\cap A$. Write $x=p+t$, where $p\in P$ and $t\in \T$. Asssume $t\neq 0$. Let $L=p+\R t$.  Then $L$ is a preordered line in $\I$ and $\phi$ fixes $L\cap P$. Moreover, $p\leq x$ and thus by Proposition 5.4 of \cite{rousseau2011masures}, there exists an isomorphism $\psi:\A\overset{[p,x]}{\rightarrow} A$. In particular, $\phi^{-1}\circ\psi :\A\rightarrow \A$ fixes $L\cap P$. But then $\phi^{-1}\circ \psi_{|L}$ is an affine isomorphism fixing a nonempty open set of $L$: this is the identity. Therefore $\phi^{-1}\circ \psi(x)=x=\phi^{-1}(x)$, which shows the lemma.
 \end{proof}
 
\begin{lem}\label{lemIsomorphisms fixant un sous ouvert}
Let $A$ be an apartment of $\I$. Let $U\subset \A\cap A$ be a nonempty relatively open set and $X=\supp(U)$. Then there exists a nonempty open subset $V$ of $U$ (in $X$) such that there exists an isomorphism $\phi:\A\overset{V}{\rightarrow} A$.
\end{lem} 

\begin{proof}
Let $\bigcup_{i=1}^k P_i$ be a decomposition into enclosed subsets of $A\cap \A$. Let $i\in \llbracket 1,k\rrbracket$ be such that $P_i\cap U$ has nonempty interior in $X$ and $\phi:\A\overset{P_i}{\rightarrow} A$. Then $\phi$ fixes a nonempty open set of $U$, which proves the lemma.
  \end{proof}
  
  \begin{lem}\label{lemEns des points fixes par un iso ferme}
  Let $A$ be an apartment of $\I$ and $\phi:\A\rightarrow A$ be an isomorphism. Let $F=\{z\in A| \phi(z)=z\}$. Then $F$ is closed in $\A$.
  \end{lem}
  
  \begin{proof}
By Proposition~\ref{propRetractions continues}, $\rho_{+\infty}\circ \phi:\A\rightarrow \A$ and $\rho_{-\infty}\circ \phi:\A\rightarrow \A$ are continuous. Let $(z_n)\in F^\N$ be such that $(z_n)$ converges in $\A$ and $z=\lim z_n$.

For all $n\in \N$, one has \[\rho_{+\infty}(\phi(z_n))=z_n=\rho_{-\infty}(\phi(z_n))\rightarrow \rho_{+\infty}(\phi(z))=z=\rho_{-\infty}(\phi(z)).\] By Proposition~\ref{propCaracterisation des points de l'appartement standard}, $z=\phi(z)$, which proves the lemma.
  \end{proof}
 
 \begin{prop}\label{propIsomorphisms fixant une partie convexe}
 Let $A$ be an apartment of $\I$ and $P\subset \A\cap A$ be a convex set. Let $X=\supp(P)$ and suppose that $\T\cap \vec{X}$ has nonempty interior in $\vec{X}$. Then there exists an isomorphism of apartments $\phi:\A\overset{P}{\rightarrow} A$.
 \end{prop}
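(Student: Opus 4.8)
The plan is to reduce to the case where $P$ is relatively open in $X$, extract from the decomposition results a single isomorphism fixing a small relatively open piece of $P$, and then \emph{propagate} this fixing to all of $P$ along directions of the Tits cone. First I would replace $P$ by its relative interior $U=\In_X(P)$, which is a nonempty relatively open convex subset of $X$ with $\supp(U)=X$ and which satisfies $P\subset\overline U$. By Lemma~\ref{lemEns des points fixés par un iso fermé} the fixed-point set $F=\{z\in\A\mid \phi(z)=z\}$ of any isomorphism $\phi:\A\to A$ is closed in $\A$; hence it suffices to produce an isomorphism fixing $U$, as it will then automatically fix $\overline U\supseteq P$, giving $\phi:\A\overset{P}{\rightarrow}A$.

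Next I would apply Lemma~\ref{lemIsomorphisms fixant un sous ouvert} to $U$ to obtain a nonempty relatively open $V\subset U$ and an isomorphism $\phi:\A\overset{V}{\rightarrow}A$, and shrink $V$ to a relatively open convex ball $V'\subset V$ with $\supp(V')=X$. Here lies the essential subtlety, and the main obstacle: an apartment isomorphism is affine only for the \emph{internal} affine structures of $\A$ and of $A$, and these structures need not agree on $\A\cap A$ except along preordered segments (by Proposition 5.4 of \cite{rousseau2011masures}). Consequently, fixing the relatively open set $V'$ does \textbf{not} force $\phi$ to fix its whole affine span $X$; one can only enlarge the fixed set along directions lying in the Tits cone. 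This is exactly the content of Lemma~\ref{lemIso fixant le cone engendré}, which moreover holds verbatim with $\T$ replaced by $-\T$ (running the same preordered-segment argument with $x\le p$ in place of $p\le x$). Writing $C=\T\cap\vec X$, the hypothesis that $\T\cap\vec X$ has nonempty interior in $\vec X$ gives $\In_{\vec X}(C)\neq\emptyset$, so that $C$ and $-C$ each span $\vec X$ and $\In(C)-\In(C)=\vec X$.

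The heart of the proof is then a connectedness argument. I would let $R$ be the set of $q\in U$ admitting a relatively open convex $W\subset U$ with $q\in W$, $\supp(W)=X$, and $\phi$ fixing $W$; this set is relatively open (its witnessing neighborhoods lie in $R$) and contains $V'$, hence is nonempty. The key claim is that $R$ is also closed in $U$: given $q'\in R$ with witness $W$ and a point $q\in U$ close to $q'$, I would write $q-q'=c_1-c_2$ with $c_1,c_2\in\In(C)$ small; since $0\in C$ one has $W\subset W+C$, so by Lemma~\ref{lemIso fixant le cone engendré} the relatively open convex set $W_1:=(W+\In(C))\cap U$ (a neighborhood of $q'+c_1$) is fixed by $\phi$, and then by the $-\T$ version of the same lemma the relatively open convex set $(W_1-\In(C))\cap U$, a neighborhood of $q=(q'+c_1)-c_2$, is fixed. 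For $c_1,c_2$ small all intermediate points stay inside $U$, so this is legitimate; hence $q\in R$. Thus every point of $U$ near $R$ lies in $R$, so $R$ is closed, and being a nonempty clopen subset of the connected set $U$ it equals $U$. Therefore $\phi$ fixes $U$, and by the first paragraph it fixes $P$.

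The delicate point to get right is precisely this local up/down threading: one must move out of the fixed region in a $+C$ direction and back in a $-C$ direction by amounts small enough that the intermediate relatively open sets remain inside the (possibly bounded) convex set $U$, which is why one cannot simply connect $V'$ to an arbitrary point of $U$ in one or two long steps and why the nonempty interior of $\T\cap\vec X$ is indispensable.
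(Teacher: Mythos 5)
Your proof is correct, and although it is assembled from the same three ingredients as the paper's --- Lemma~\ref{lemIsomorphisms fixant un sous ouvert} to get an isomorphism fixing some relatively open piece, Lemma~\ref{lemIso fixant le cone engendré} to propagate the fixed locus along the Tits cone, and Lemma~\ref{lemEns des points fixés par un iso fermé} to pass to closures --- the global mechanism is genuinely different. The paper argues by contradiction using only the \emph{upward} cone: it picks a non-fixed point $y\in \In_r(P)$, lets $z$ be the critical point of the segment $[x,y]$ (the supremum of the part near which $\phi$ fixes a neighborhood), takes a point $z_1$ just below $z$ where $\phi$ fixes a ball, invokes Lemma~\ref{lemIsomorphisms fixant un sous ouvert} a \emph{second} time to produce another isomorphism $\psi$ fixing a small ball centered at a point $z_2$ near $z$ at which $\phi$ fixes nothing, and then intersects the two upward cones $z_1+\T\cap X$ and $z_2+\T\cap X$ inside $\In_r(P)$: both $\phi$ and $\psi$ fix this open set by Lemma~\ref{lemIso fixant le cone engendré}, so the affine map $\phi^{-1}\circ\psi$ fixes an open subset of $X$, hence all of $X$, forcing $\phi(z_2)=\psi(z_2)=z_2$, a contradiction. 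You avoid the second isomorphism and the supremum entirely by a clopen/connectedness argument on $\In_X(P)$, at the cost of needing \emph{two-sided} propagation: up along $\In(\T\cap\vec{X})$, then back down along $-\In(\T\cap\vec{X})$. That requires the $-\T$ analogue of Lemma~\ref{lemIso fixant le cone engendré}, which the paper never states, but your justification of it is right: Proposition 5.4 of \cite{rousseau2011masures} applies to any preordered pair, so the lemma's proof is insensitive to replacing $p\leq x$ by $x\leq p$. The trade seems fair: the paper's rigidity trick ($\phi^{-1}\circ\psi$ affine and fixing an open set) lets it quote its lemmas verbatim, while your route is structurally cleaner --- the paper's one-dimensional supremum argument is really a special case of your connectedness argument --- at the price of one extra (easy) variant lemma. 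When writing it up, do record the uniform scaling estimate behind ``write $q-q'=c_1-c_2$ with $c_1,c_2\in\In(\T\cap\vec{X})$ small'' (fix a ball $B(c_0,r)\subset\In(\T\cap\vec{X})$ and use that this interior is a cone); that estimate is exactly where the hypothesis that $\T\cap\vec{X}$ has nonempty interior in $\vec{X}$ does its work, as you note.
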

 
 \begin{proof} (see Figure~\ref{figIso fixant une partie})
 Let $V\subset P$ be a nonempty open set of $X$ such that there exists an isomorphism $\phi:\A\overset{V}{\rightarrow} A$ (such a $V$ exists by Lemma~\ref{lemIsomorphisms fixant un sous ouvert}). Let us show that $\phi$ fixes $\In_r(P)$.

Let $x\in V$. One fixes the origin of $\A$ in $x$ and thus $X$ is a vector space. Let $(e_j)_{j\in J}$ be a basis of $\A$ such that for some subset $J'\subset J$, $(e_j)_{j\in J'}$ is a basis of $X$ and $(x+\T)\cap X\supset \bigoplus_{j\in J'}\R^*_+ e_j$. For all $y\in X$, $y=\sum_{j\in J'}y_j e_j$ with $y_j\in \R$ for all $j\in J'$, one sets $|y|=\max_{j\in J'}|y_j|$. If $a\in A$ and $r>0$, one sets $B(a,r)=\{y\in X|\ |y-a|<r\}$.
 
   Suppose that $\phi$ does not fix $\In_r(P)$. Let $y\in \In_r(P)$ be such that $\phi(y)\neq y$. Let \[s=\sup\{t\in [0,1]|\exists U\in \V_X([0,ty])|\ \phi\mathrm{\ fixes\ }U\}.\] Set $z=sy$. Then by Lemma~\ref{lemEns des points fixes par un iso ferme}, $\phi(z)=z$.
   
By definition of $z$, for all $r>0$, $\phi$ does not fix $B(z,r)$.    Let $r>0$ be such that $B(z,5r)\subset \In_r P$. Let $z_1\in B(z,r)\cap [0,z[$ and $r_1>0$  be such that $\phi$ fixes $B(z_1,r_1)$ and $z_2'\in B(z,r)$ such that $\phi(z_2')\neq z_2'$. Let $r_2'\in ]0,r[$ be such that for all $a\in B(z_2',r_2')$, $\phi(z)\neq z$. Let $z_2\in B(z_2',r_2')$ be such that for some $r_2\in ]0,r_2'[$, $B(z_2,r_2)\subset B(z_2',r_2')$ and such that there exists an isomorphism $\psi:\A\overset{B(z_2,r_2)}{\rightarrow} A$ (such $z_2$ and $r_2$ exists by Lemma~\ref{lemIsomorphisms fixant un sous ouvert}). Then $|z_1-z_2| < 3r$.

Let us prove that $(z_1+\T\cap X)\cap (z_2+\T\cap X)\cap \In_r( P)$ contains a nonempty open set $U\subset X$. One identifies $X$ and $\R^{J'}$ thanks to the basis $(e_j)_{j\in J'}$. One has $z_2-z_1\in ]-3,3[^{J'}$ and thus \[(z_1+\T)\cap (z_2+\T)=(z_1+\T)\cap (z_1+z_2-z_1+\T)\supset z_1+ ]3,4[^{J'}.\] As $P\supset B(z_1,4r)$, the set $(z_1+\T\cap X)\cap (z_2+\T\cap X)\cap \In_r( P)$ contains a nonempty open set $U\subset X$.
   
   By Lemma~\ref{lemIso fixant le cone engendre}, $\phi$ and $\psi$ fix $U$. Therefore, $\phi^{-1}\circ \psi$ fixes $U$ and as it is an isomorphism of affine space of $A$, $\phi^{-1}\circ \psi$ fixes $X$. Therefore $\phi^{-1}\circ \psi(z_2)=\phi^{-1}(z_2)=z_2$ and thus $\phi(z_2)=z_2$: this is absurd. Hence $\phi$ fixes $\In_r(P)$. By Lemma~\ref{lemEns des points fixes par un iso ferme}, $\phi$ fixes $\overline{\mathrm{\In_r}(P)}=\overline{P}$ and thus $\phi$ fixes $P$, which shows the proposition. 
 \end{proof}

\begin{figure}
 \centering
\includegraphics[scale=0.3]{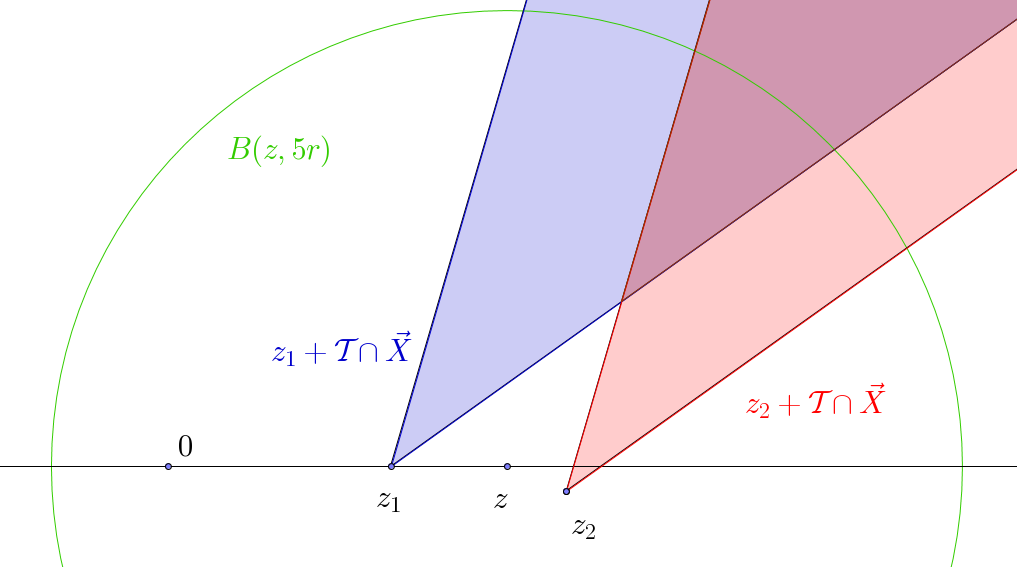}
\caption{Proof of Proposition~\ref{propIsomorphisms fixant une partie convexe}}\label{figIso fixant une partie}
\end{figure}

\section{Intersection of two apartments sharing a generic ray}\label{secIntersection de deux apparts contenant une demi-droite generique}

 The aim of this section is to prove Theorem~\ref{thmIntersection enclose fort}: let $A$ and $B$ be two apartments sharing a generic ray. Then  $A\cap B$ is enclosed and there exists an isomorphism $\phi:A\overset{A\cap B}{\rightarrow} B$.  

We first reduce our study to the case where $A\cap B$ has nonempty interior by the following lemma:

\begin{lem}\label{lemReduction au cas d'interieur non vide}
Suppose that for all apartments $A,B$ such that $A\cap B$ contains a generic ray and has nonempty interior, the set $A \cap B$ is convex. Then if $A_1$ and $A_2$ are two apartments containing a generic ray, the set $A_1\cap A_2$ is enclosed and there exists an isomorphism $\phi:A_1\overset{A_1\cap A_2}{\rightarrow} A_2$.   
\end{lem}

\begin{proof}
 Let us prove that $A_1\cap A_2$ is convex. Let $\delta$ be the direction of a generic ray shared by $A_1$ and $A_2$. Let $x_1,x_2\in A_1\cap A_2$ and $\F^\infty$ be the vectorial face direction containing $\delta$. Let $\F'^\infty$ be the vectorial face direction of $A_1$ opposite to $\F^\infty$. Let $C_1$ be a chamber of $A_1$ containing $x_1$ and  $C_2$ be a chamber of $A_2$ containing  $x_2$. Set $\rr_1=\rr(C_1,\F'^\infty)\subset A_1$, $\rr_2=\rr(C_2,\F^\infty)\subset A_2$, $\RR_1=germ(\rr_1)$ and $\RR_2=germ(\rr_2)$. By (MA3) there exists an apartment $A_3$ containing $\RR_1$ and $\RR_2$.

 Let us prove that $A_3$ contains $x_1$ and $x_2$. One identifies $A_1$ and $\A$. Let $F^v=0+\mathfrak{F}^\infty$ and $F'^v=0+\mathfrak{F}'^\infty$. As $A_3\supset \RR_1$, there exists $f'\in F'^v$ such that $A_3\supset x_1+ f'+F'^v$. Moreover $A_3\supset \mathfrak{F}^\infty$ and thus it contains $x_1+f'+\mathfrak{F}^\infty$. By Proposition 4.7.1 of \cite{rousseau2011masures} $x_1+f'+\mathfrak{F}^\infty=x_1+f'+F^v$ and thus $A_3\ni x_1$. As $A_3\supset \RR_2$, there exists $f\in F^v$ such that $A_3\supset x_2+f$. As $A_3\supset \mathfrak{F}'^\infty$, \[A_3\supset x_2+f+\mathfrak{F}'=x_2+f+F'^v\] by Proposition 4.7.1 of \cite{rousseau2011masures}. Thus $A_3\ni x_2$.
 
If $i\in \{1,2\}$, each element of $\RR_i$ has nonempty interior in $A_i$ and thus $A_i\cap A_3$ has nonempty interior.  By hypothesis, $A_1\cap A_3$ and $A_2\cap A_3$ are convex. By Proposition~\ref{propIsomorphisms fixant une partie convexe}, there exist $\phi:A_1\overset{A_1\cap A_3}{\rightarrow} A_3$ and $\psi:A_2\overset{A_2\cap A_3}{\rightarrow} A_3$. Therefore $[x_1,x_2]_{A_1}=[x_1,x_2]_{A_3                                         }=[x_1,x_2]_{A_2}$ and thus $A_1\cap A_2$ is convex. 

The existence of an isomorphism $A_1\overset{A_1\cap A_2}{\rightarrow }A_2$ is a consequence of Proposition~\ref{propIsomorphisms fixant une partie convexe} because  the direction $X$ of $A_1\cap A_2$ meets  $\mathring{\T}$ and thus $\vec{X}\cap \T$ spans $\T$.

 The fact that $A_1\cap A_2$ is enclosed is a consequence of  Proposition~\ref{propIntersection enclose quand convexe}.
\end{proof}

\subsection{Definition of the frontier maps}\label{subsubDefinition des applications frontieres}
The aim of~\ref{subsubDefinition des applications frontieres} to~\ref{subsubConvexite sous FrC} is to prove that if $A$ and $B$ are two apartments containing a generic ray and such that $A\cap B$ has nonempty interior, then $A\cap B$ is convex.  There is no loss of generality in assuming that $B=\A$ and that the direction $\R_+\nu$ of $\delta$ is contained in $\pm \overline{C^v_f}$. As the roles of $C^v_f$ and $-C^v_f$ are similar, one supposes that $\R_+\nu \subset \overline{C^v_f}$ and that $A\neq \A$. These hypothesis run until the end of~\ref{subsubConvexite sous FrC}.

In this subsection, we parametrize $\Fr(A\cap \A)$ by a map and describe $A\cap \A$ using the values of this map.

\begin{lem}\label{lemÉlements grands dans la demi-droite generique}
Let $V$ be a bounded subset of $\A$. Then there exists $a\in \R$ such that for all $u\in [a,+\infty[$ and $v\in V$, $v\leq u\nu$. 
\end{lem}

\begin{proof}
Let $a\in \R^*_+$ and $v\in V$, then $a\nu-v=a(\nu-\frac{1}{a}v)$. As $\nu\in \mathring{\T}$ and $V$ is bounded, there exists $b>0$ such that for all $a>b$, $\nu-\frac{1}{a}v \in \mathring{\T}$, which proves the lemma because $\mathring{\T}$ is a cone.
\end{proof}

\begin{lem}\label{lemIntersection contenant des demi-droites}
Let $y\in A\cap \A$. Then $A\cap \A$ contains $y+\R_+\nu$. 
\end{lem}

\begin{proof}
Let $x\in \A$ such that $A\cap \A \supset x+\R_+\nu$. The ray $x+\R_+\nu$ is generic and by (MA4), if $y\in \A$, $A\cap \A$ contains the convex hull of $y$ and $x+[a,+\infty[\nu$, for some $a\gg 0$. In particular it contains $y+\R_+\nu$.
\end{proof}

Let $U=\{y\in \A|y+\R\nu \cap A\neq \emptyset\}=(A\cap \A)+\R\nu$.   

\begin{lem}\label{lemConvexite de U}
The set $U$ is convex.
\end{lem}

\begin{proof}
Let $u,v\in U$. Let $u'\in u+\R_+\nu\cap A$. By Lemma~\ref{lemÉlements grands dans la demi-droite generique} and Lemma~\ref{lemIntersection contenant des demi-droites}, there exists $v'\in v+\R_+\nu$ such that $u'\leq v'$. By consequence 2) of Proposition 5.4 of \cite{rousseau2011masures}, $[u',v']\subset A\cap \A$. By definition of $U$, $[u',v']+\R\nu\subset U$ and in particular $[u,v]\subset U$, which is the desired conclusion.
\end{proof}

There are two possibilities: either   there exists $y\in \A$ such that $y+\R\nu\subset A$ or  for all $y\in \A$, $y+\R\nu\nsubseteq A$. The first case is the easiest and we treat it in the next lemma. 

\begin{lem}\label{lemCas où l'intersection contient une droite generique}
Suppose that for some $y\in \A$, $y-\R_+\nu \subset A\cap \A$. Then $A\cap \A=U$. In particular, $A\cap \A$ is convex.
\end{lem} 

\begin{proof}
By Lemma~\ref{lemIntersection contenant des demi-droites}, $A\cap \A=(A\cap \A)+\R_+\nu$. By symmetry and by hypothesis on $A\cap \A$, one has $(A\cap \A)+\R_-\nu =A\cap \A$. Therefore $A\cap \A= (A\cap \A)+\R\nu= U$.
\end{proof}

\subsubsection*{Definition of the frontier} Let $u\in U$. Then by Lemma~\ref{lemIntersection contenant des demi-droites}, $u+\R\nu \cap A$ is of the form $a+\R^*_+\nu$ or $a+\R_+\nu$ for some $a\in \A$. As $A\cap \A$ is closed (by Proposition~\ref{propIntersection fermee}), the first case cannot occur. One sets $\Fr_\nu(u)=a\in \A\cap A$. One fixes $\nu$ until the end of~\ref{subsubConvexite sous FrC} and one writes $\Fr$ instead of $\Fr_\nu$.

\begin{lem}\label{lemDescription explicite de l'intersection de deux apparts}
The map $\Fr$ takes it values in  $\Fr(\A\cap A)$ and $\A\cap A=\bigcup_{x\in U}\mathrm{Fr}(x)+\R_+\nu$.
\end{lem}

\begin{proof}
Let $u\in U$. Then $\Fr(u)+\R_+\nu=(u+\R\nu)\cap A$. Thus $\Fr(u)\notin \In(A\cap \A)$. By Proposition~\ref{propIntersection fermee}, $\Fr(u)\in \Fr(A\cap \A)$ and hence $\Fr(U)\subset \Fr(A\cap \A)$.

Let $u\in A\cap \A$. One has $u\in A\cap (u+\R\nu)=\Fr(u)+\R_+\nu$ and we deduce that $\A\cap A\subset \bigcup_{x\in U}\mathrm{Fr}(x)+\R_+\nu$. The reverse inclusion is a consequence of Lemma~\ref{lemIntersection contenant des demi-droites}, which finishes the proof.
\end{proof}

Let us sketch the proof of the convexity of $A\cap \A$ (which is Lemma~\ref{lemConvexite de l'intersection cas nonvide}).  If $x,y\in \mathring{U}$, one defines $\Fr_{x,y}:[0,1]\rightarrow \Fr(A\cap \A)$ by $\Fr_{x,y}(t)=\Fr((1-t)x+ty)$ for all $t\in [0,1]$. For all $t\in [0,1]$,  there exists a unique $f_{x,y}(t)\in\R$ such that $\Fr_{x,y}(t)=(1-t)x+ty+f_{x,y}(t)\nu$.  We prove that for ``almost'' all $x,y\in \mathring{U}$, $f_{x,y}$ is convex. Let $x,y\in \mathring{U}$. We first prove that $f_{x,y}$ is continuous and piecewise affine. This enables to reduce the study of the convexity of $f_{x,y}$ to the study of $f_{x,y}$ at the points where the slope changes. Let $\mathcal{M}$ be a finite set of walls such that $\Fr(\mathring{U})\subset \bigcup_{M\in \mathcal{M}}M$, which exists by Proposition~\ref{propÉcriture de l'intersection comme union des Pi, cas general}. Using order-convexity, we prove that if $\{x,y\}$ is such that for each point $u\in ]0,1[$ at which the slope changes, $\Fr_{x,y}(u)$ is contained in exactly two walls of $\mathcal{M}$, then $f_{x,y}$ is convex. We then prove that there are ``enough'' such pairs and conclude by an argument of density.

 \subsection{Continuity of the frontier}
In this subsection, we prove that $\Fr$ is continuous on $\mathring{U}$, using order-convexity. 

Let $\lambda:U\rightarrow \R$ such that for all $x\in U$, $\Fr(x)=x+\lambda(x)\nu$. We prove the continuity of $\Fr_{|\mathring{U}}$ by proving the continuity of $\lambda_{|\mathring{U}}$. For this, we begin by dominating $\lambda([x,y])$ if $x,y\in \mathring{U}$ (see Lemma~\ref{lemBorne de lambda sur un segment}) by a number depending on $x$ and $y$. We use it to prove that if $n\in \N$ and  $a_1,\ldots,a_n\in \mathring{U}$, then $\lambda\big(\conv(\{a_1,\ldots,a_n\})\big)$ is dominated and then deduce that $\Fr_{|\mathring{U}}$ is continuous (which is Lemma~\ref{lemContinuite de la frontiere}).

\begin{lem}\label{lemBorne de lambda sur un segment}
Let $x,y\in U$, $M=\max\{\lambda(x),\lambda(y)\}$ and $k\in \R_+$ be such that $x+k\nu \geq y$. Then for all $u\in [x,y]$, $\lambda(u)\leq k+M$.
\end{lem}

\begin{proof}
By Lemma~\ref{lemIntersection contenant des demi-droites}, $x+M\nu$ and $y+M\nu$ are in $A$. By hypothesis, $x+k\nu +M\nu\geq y+M\nu$. Let $t\in [0,1]$ and $u=tx+(1-t)y$. By order-convexity $t(x+k\nu+M\nu)+(1-t)(y+M\nu)\in A$. Therefore $\lambda(u)\leq M+tk\leq M+k$, which is our assertion.
\end{proof}

\begin{lem}\label{lemCaratheodory}
Let $d\in \N$, $X$ be a $d$ dimensional affine space and $P\subset X$. One sets $\conv_0(P)=P$ and for all $k\in \N$, \[\conv_{k+1}(P)=\{(1-t)p+tp'| t\in [0,1]\mathrm{\ and\ }(p,p')\in \conv_k(P)^2\}.\] Then $\conv_d(P)=\conv(P)$.
\end{lem}

\begin{proof}
By induction, \[\conv_k(P)=\{\sum_{i=1}^{2^k} \lambda_i p_i|(\lambda_i)\in [0,1]^{2^k},\ \sum_{i=1}^{2^k}\lambda_i=1\mathrm{\ and\ }(p_i)\in P^{2^k}\}.\] This is thus a consequence of Carath\'eodory's Theorem. 
\end{proof}

\begin{lem}\label{lemBorne de lambda sur une preenveloppe convexe}
Let $P$ be a bounded subset of $\mathring{U}$ such that $\sup\big(\lambda(P)\big)<+\infty$.  Then $\sup\big(\lambda(\conv_1(P))\big)<+\infty$.
\end{lem}

\begin{proof}
Let $M=\sup_{x\in P}\lambda(x)$ and $k\in \R_+$ such that for all $x,x'\in P$,  $x'+k\nu \geq x$, which exists by Lemma~\ref{lemÉlements grands dans la demi-droite generique}. Let $u\in \conv_1(P)$ and $x,x'\in P$ such that $u\in [x,x']$. By Lemma~\ref{lemBorne de lambda sur un segment}, $\lambda(u)\leq k+M$ and the lemma follows.

\end{proof}

\begin{lem}\label{lemBorne de lambda sur un voisinage d'un point}
Let $x\in \mathring{U}$. Then there exists $V\in \V_{\mathring{U}}(x)$ such that $V$ is convex and $\sup\big(\lambda(V)\big)<+\infty$.
\end{lem}

\begin{proof}
Let $n\in \N$ and $a_1,\ldots,a_n\in \mathring{U}$ such that $V=\conv(a_1,\ldots,a_n)$ contains $x$ in its interior. Let $M\in \R_+$ such that for all $y,y'\in V$, one has $y+M\nu\geq y'$, which is possible by Lemma~\ref{lemÉlements grands dans la demi-droite generique}. One sets $P=\{a_1,\ldots,a_n\}$ and for all $k\in \N$, $P_k=\conv_k(P)$. By induction using Lemma~\ref{lemBorne de lambda sur une preenveloppe convexe}, $\sup\big(\lambda(P_k)\big)<+\infty$ for all $k\in \N$ and we conclude with Lemma~\ref{lemCaratheodory}.
\end{proof}

\begin{lem}\label{lemInegalite de continuite sur les ouverts convexes}
Let $V\subset \mathring{U}$ be open, convex, bounded and such $\sup\big(\lambda(V)\big)\leq M$ for some $M\in \R_+$. Let $k\in \R_+$ such that for all $x,x'\in V$, $x+k\nu\geq x'$. Let $a\in V$ and $u\in \A$ such that $a+u\in V$. Then for all $t\in [0,1]$, $\lambda(a+tu)\leq (1-t)\lambda(a)+t(M+k)$. 
\end{lem}

\begin{proof}
By Lemma~\ref{lemIntersection contenant des demi-droites},  $a+u+(M+k)\nu\in A$. Moreover $a+u+(M+k)\nu\geq a+M\nu$, $a+M\nu\geq a+\lambda(a)\nu=\Fr(a)$ and thus $a+u+(M+k)\nu \geq \Fr(a)$. 

Let $t\in [0,1]$. Then by order-convexity, \[(1-t)(a+\lambda(a)\nu)+t(a+u+(M+k)\nu=a+tu+\big((1-t)\lambda(a)+t(M+k)\big)\nu \in A.\]

Therefore $\lambda(a+tu)\leq (1-t)\lambda(a)+t(M+k)$, which is our assertion.
\end{proof}

\begin{lem}\label{lemContinuite de la frontiere}
The map $\Fr$ is continuous on $\mathring{U}$.
\end{lem}

\begin{proof}
Let $x\in \mathring{U}$ and $V\in \V_{\mathring{U}}(x)$ be convex, open, bounded and such that $\sup \big(\lambda(V)\big)\leq M$  for some $M\in \R_+$, which exists by Lemma~\ref{lemBorne de lambda sur un voisinage d'un point}. Let $k\in \R_+$ such that for all $v,v'\in V$, $v+k\nu \geq v'$. Let $|\ |$ be a norm on $\A$ and $r>0$ such that $B(x,r)\subset V$, where $B(x,r)=\{u\in \A|\ |x-u|\leq r\}$. Let $S=\{u\in \A|\ |u-x|=r\}$. Let $N=M+k$.

Let $y\in S$ and $t\in [0,1]$. By applying Lemma~\ref{lemInegalite de continuite sur les ouverts convexes} with $a=x$ and $u=y-x$, we get that \[\lambda((1-t)x+ty)\leq \lambda(x)+tN.\] By aplying Lemma~\ref{lemInegalite de continuite sur les ouverts convexes} with $a=(1-t)x+ty$ and $u=x-y$, we obtain that \[\lambda(x)=\lambda\big((1-t)x+ty+t(x-y)\big)\leq \lambda\big((1-t)x+ty\big)+tN.\] Therefore for all $t\in [0,1]$ and $y\in S$, \[\lambda(x)-tN\leq \lambda\big((1-t)x+ty\big) \leq \lambda(x)+tN.\]

Let $(x_n)\in B(x,r)^\N$ such that $x_n\rightarrow x$. Let $n\in \N$. One sets $t_n=\frac{|x_n-x|}{r}$. If $t_n=0$, one chooses $y_n\in S$. It $t_n\neq 0$, one sets $y_n=x+\frac{1}{t_n}(x_n-x)\in S$. Then $x_n=t_ny_n+(1-t_n)x$ and thus $|\lambda(x_n)-\lambda(x)|\leq t_n N \rightarrow 0$. Consequently $\lambda_{|\mathring{U}}$ is continuous and we deduce that $\Fr_{|\mathring{U}}$ is continuous.
\end{proof}

\subsection{Piecewise affineness of $\Fr_{x,y}$}

We now study the map $\Fr$. We begin by proving that there exists a finite set $\HH$ of hyperplanes of $\A$ such  $\Fr$ is affine on each connected component of $\mathring{U}\backslash \bigcup_{H\in \HH}H$.

Let $\M$ be finite set of walls such that $\Fr(A\cap \A)$ is contained in $\bigcup_{M\in \mathcal{M}}M$, whose existence is provided by Proposition~\ref{propÉcriture de l'intersection comme union des Pi, cas general}. Let $r=|\mathcal{M}|$. Let $\{\beta_1,\ldots,\beta_r\}\in \Phi^r$ and $(\ell_1,\ldots,\ell_r)\in \prod_{i=1}^r \Lambda'_{\beta_i}$ be such that $\mathcal{M}=\{M_i|\ i\in \llbracket 1,r\rrbracket\}$ where $M_i=\beta_i^{-1}(\{\ell_i\})$ for all $i\in \llbracket 1,r\rrbracket$.

Let  $i,j\in \llbracket 1,r\rrbracket$ be such that $i\neq j$. If $\beta_i(\nu)\beta_j(\nu)\neq 0$ and $M_i$ and $M_j$ are not parallel, one sets $ H_{i,j}=\{x\in \A|\frac{\ell_i-\beta_i(x)}{\beta_i(\nu)}=\frac{\ell_j-\beta_j(x)}{\beta_j(\nu)}\}$ (this definition will appear naturally in the proof of the next lemma). Then $H_{i,j}$ is a hyperplane of $\A$. Indeed, otherwise $H_{i,j}=\A$. Hence  $\frac{\beta_j(x)}{\beta_j(\nu)}-\frac{\beta_i(x)}{\beta_i(\nu)}=\frac{\ell_j}{\beta_j(\nu)}-\frac{\ell_i}{\beta_i(\nu)}$, for all $x\in \A$. Therefore $\frac{\beta_j(x)}{\beta_j(\nu)}-\frac{\beta_i(x)}{\beta_i(\nu)}=0$ for all $x\in \A$ and thus $M_i$ and $M_j$ are parallel: a contradiction.  Let $\HH=\{H_{i,j}|i\neq j\mathrm,\ \beta_i(\nu)\beta_j(\nu)\neq 0\mathrm{\ and\ }M_i \nparallel M_j \}\cup \{M_i|\beta_i(\nu)=0\}$.
 
 \medskip
 
  Even if the elements of $\HH$ can be walls of $\A$, we will only consider them as hyperplanes of $\A$. To avoid confusion between elements of $\M$ and elements of $\HH$, we will try to use the letter M (resp. H) in the name of objects related to $\M$ (resp. $\HH$).

\begin{lem}\label{lemFrontiere sans pb sauf hyperplans}
Let $M_\cap=\bigcup_{M\neq M'\in \M}M\cap M'$.  Then $\Fr^{-1}(M_\cap)\subset \bigcup_{H\in \mathcal{H}}H$.
\end{lem}

\begin{proof}
Let $x\in \Fr^{-1}(M_\cap)$. One has $\Fr(x)=x+\lambda\nu$, for some $\lambda\in \R$. There exists $i,j\in \llbracket 1,r\rrbracket$ such that: \begin{itemize}

\item $i\neq j$,

\item $\beta_i(\Fr(x))=\ell_i$ and $\beta_j(\Fr(x))=\ell_j$,

\item $M_i$ and $M_j$ are not parallel.

\end{itemize}  Therefore if $\beta_i(\nu)\beta_j(\nu)\neq 0$, then $\lambda=\frac{\ell_i-\beta_i(x)}{\beta_i(\nu)}=\frac{\ell_j-\beta_j(x)}{\beta_j(\nu)}$ and thus $x\in H_{i,j}$. If $\beta_i(\nu)\beta_j(\nu)=0$, then $x\in M_i\cup M_j$, which proves the lemma.
\end{proof} 

\begin{lem}\label{lemIntersection adherence de son int}
One has $A\cap \A=\overline{\In(A\cap \A)}$.
\end{lem}

\begin{proof}
By Proposition~\ref{propIntersection fermee}, $A\cap \A$ is closed and thus $\overline{\In(A\cap \A)}\subset A\cap \A$.

Let $x\in A\cap \A$. Let $V$ be an open bounded set contained in $A\cap \A$. By Lemma~\ref{lemÉlements grands dans la demi-droite generique} applied to $x-V$, there exists $a>0$ such that for all $v\in V$, one has $v+a\nu \geq x$. One has  $V+a\nu\subset A\cap \A$ and by order convexity (Cons\'equence 2 of Proposition 5.4 in \cite{rousseau2011masures}), $\conv(V+a\nu, x)\subset A\cap \A$. As $\conv(V+a\nu,x)$ is a convex set with nonempty interior, there exists $(x_n)\in \In(\conv(V+a\nu,x))^\N$ such that $x_n\rightarrow x$, and the lemma follows.
\end{proof}

Let $f_1,\ldots,f_s$ be affine forms on $\A$ such that $\HH=\{ f_i^{-1}(\{0\})|i\in \llbracket 1,s\rrbracket \}$ for some $s\in \N$. Let $R=(R_i)\in \{\leq, \geq,<,> \}^{s}$.  
One sets \[P_R=\mathring{U}\cap\{x\in \A|\  (f_i(x)\ R_i \ 0) \  \forall i\in \llbracket 1,s\rrbracket\}.\]
 If  $R=(R_i)\in \{\leq,\geq\}^s$,
  one defines $R'=(R'_i)\in \{<,>\}^{s}$ 
 by $R'_i=`` < $'' if $R_i=``\leq$'' and $R'_i=``>$'' otherwise (one replaces large inequalities by strict inequalities). If $R\in \{\leq,\geq \}^s$, then $\In({P_R})=P_{R'}$.

Let $X=\{R\in \{\leq ,\geq\}^s| \mathring{P_R}\neq \emptyset\}$. By Lemma~\ref{lemIntersection adherence de son int} and Lemma~\ref{lemUnion des Pi dont l'interieur est non vide},  $\mathring{U}=\bigcup_{R\in X} P_R$ and for all $R\in X$, $\mathring{P_R}\subset \A\backslash \bigcup_{H\in \HH}H$.

\begin{lem}\label{lemFrontiere affine pm}
Let $R\in X$. Then there exists $M\in \mathcal{M}$ such that $\Fr(P_R)\subset M$.
\end{lem}

\begin{proof}
Let $x\in \mathring{P}_{R}$. Let $M\in \mathcal{M}$ be such that $\Fr(x)\in M$.  Let us show that $\Fr(P_R)\subset M$. By continuity of $\Fr$ (by Lemma~\ref{lemContinuite de la frontiere}), it suffices to prove that $\Fr(\mathring{P}_R)\subset M$. By connectedness of $\mathring{P}_R$ it suffices to prove that $\Fr^{-1}(M)\cap \mathring{P}_R$ is open and closed. As $\Fr$ is continuous, $\Fr^{-1}(M)\cap \mathring{P}_R$ is closed (in $\mathring{P}_R$). 

Suppose that $\Fr^{-1}(M)\cap \mathring{P}_R$ is not open. Then there exists $y\in \mathring{P}_R$ such that $\Fr(y)\in M$ and a sequence $(y_n)\in (\mathring{P}_R)^\N$ such that  $y_n\rightarrow y$ and such that $\Fr(y_n)\notin M$ for all $n\in \N$. For all $n\in \N$, $\Fr(y_n)\in \bigcup_{M'\in \M} M'$, and thus, maybe extracting a subsequence, one can suppose that for some $M'\in \M$, $y_n\in M'$ for all $n\in \N$.

As $\Fr$ is continuous (by Lemma~\ref{lemContinuite de la frontiere}), $\Fr(y)\in M'$. Thus $\Fr(y)\in M\cap M'$ and by Lemma~\ref{lemFrontiere sans pb sauf hyperplans}, $y\in \bigcup_{H\in \HH}H$, which is absurd by choice of $y$. Therefore, $\Fr^{-1}(M)\cap \mathring{P}_R$ is open, which completes the proof of the lemma.
\end{proof}

\begin{lem}\label{lemFrontiere restreinte aux Pr}
Let $R\in X$ and $M\in \M$ be such that $\Fr(P_R)\subset M$. Then $\nu\notin \vec{M}$ and there exists a (unique) affine morphism $\psi:\A\rightarrow M$ such that $\Fr_{|P_R}=\psi_{|P_R}$. Moreover $\psi$ induces an isomorphism $\overline{\psi}:\A/\R\nu \rightarrow M$.
\end{lem}

\begin{proof}
If $y\in \mathring{U}$, then $\Fr(y)=y+k(y)\nu$ for some $k(y) \in \R$. Let $\alpha\in \Phi$ be such that $M=\alpha^{-1}(\{u\})$ for some $u\in -\Lambda_\alpha'$. For all $y\in P_R$, one has $\alpha(\Fr(y))=\alpha(y)+k(y)\alpha(\nu)=u$ and $\alpha(\nu)\neq 0$ because $\alpha$ is not constant on $P_{R}$. Consequently $\nu\notin \vec{M}$ and  $\Fr(y)=y+\frac{u-\alpha(y)}{\alpha(\nu)}\nu$. One defines $\psi:\A\rightarrow M$ by $\psi(y)=y+\frac{u-\alpha(y)}{\alpha(\nu)}\nu$ for all $y\in \A$ and $\psi$ has the desired properties.
\end{proof}

\subsection{Local convexity of $\Fr_{x,y}$}

Let $M\in \mathcal{M}$ and $\vec{M}$ be its direction. Let $\T_M=\mathring{\T}\cap \vec{M}$ and $D_M$ be the half-apartment containing a shortening of $\R_+\nu$ and whose wall is $M$.

\begin{lem}\label{lemInclusion dans un demi appart sur le cone}
Let $a\in \Fr(\mathring{U})$ and suppose that there exists $\mathcal{K}\in \V_{\mathring{U}}(a)$ such that  $\Fr(\mathcal{K})\subset M$ for some $M\in \mathcal{M}$. Then $\Fr\big((a\pm \mathring{\T}_M)\cap \mathring{U}\big)\subset D_M$.
\end{lem}

\begin{proof} 
Let $u\in \mathring{U}\cap (a-\mathring{\T}_M)$, $u\neq a$. Suppose $\Fr(u)\notin D_M$. Then $\Fr(u)=u-k\nu$, with $k\geq 0$. Then $\Fr(u)\leq u\mathring{\leq} a$ (which means that $a-u\in \mathring{\T}$). Therefore for some $\mathcal{K}'\in \V_M(a)$ such that $\mathcal{K}'\subset \mathcal{K}$, one has $\Fr(u)\mathring{\leq} u'$ for all $u'\in \mathcal{K}'$. As a consequence $\A\cap A\supset \conv(\mathcal{K}',\Fr(u))$ and thus $\Fr(u')\notin M$ for all $u'\in \mathcal{K}'$. This is absurd and hence $\Fr(u)\in D_M$.

Let $v\in \mathring{U}\cap(a+\mathring{\T}_M) $, $v\neq a$ and suppose that $\Fr(v)\notin D_M$. Then for $v'\in [\Fr(v),v[$ near enough from $v$, one has $a\leq v'$. Therefore, $[a,v']\subset \A\cap A$. Thus for all $t\in ]a,v[$, $\Fr(t)\notin D_M$, a contradiction. Therefore $\Fr(v)\in D_M$ and the lemma follows.
\end{proof}

The following lemma is crucial to prove the local convexity of $\Fr_{x,y}$  for good choices of $x$ and $y$. This is mainly here that we use  that $A\cap \A$ have nonempty interior.

Let $H_\cap=\bigcup_{H\neq H'\in \HH}  H\cap H'$.

\begin{lem}\label{lemConcavite locale}
Let $x\in \mathring{U}\cap(\bigcup_{H\in \HH}H )\backslash H_\cap$ and $H\in \HH$ be such that $x\in H$. Let $C_1$ and $C_2$ be the half-spaces defined by $H$. Then there exists $V\in \V_{\mathring{U}}(x)$ satisfying the following conditions:

\begin{enumerate}
\item\label{itemDecoupage de V} For $i\in \{1,2\}$, let $V_i=V\cap \mathring{C_i}$. Then $V_i\subset \mathring{P_{R_i}}$ for some $R_i\in X$.

\item\label{itemConvexite locale}  Let $M$ be a wall containing $\Fr(P_{R_1})$. Then  $\Fr(V)\subset D_{M}$.
\end{enumerate}
\end{lem}

\begin{proof} (see Figure~\ref{figConcavite locale})
The set $\mathring{U}\backslash \bigcup_{H\in \HH \backslash\{H\}}H$ is open in $\mathring{U}$. Let $V'\in \V_{\mathring{U}}(x)$ be such that 
 $V'\cap \bigcup_{H'\in \HH\backslash\{H\}}H'=\emptyset$ and such that $V'$ is convex.
  Let $i\in\{1,2\}$ and $V_i'=V'\cap \mathring{C_i}$. Then $V_i'\subset \mathring{U}\backslash \bigcup_{H\in \mathcal{H}} H$. Moreover $V_i'$ is connected. As the connected components of $\mathring{U}\backslash \bigcup_{H\in \mathcal{H}} H$ are the $\mathring{P}_{R}$'s for $R\in X$, we deduce that $V'$ satisfies~\ref{itemDecoupage de V}.

Let $\psi:\A\rightarrow M$ be the affine morphism such that $\psi_{|P_{R_1}}=\Fr_{|P_{R_1}}$ and $\overline{\psi}:\A/\R\nu\rightarrow M$ be the induced isomorphism, which exist by Lemma~\ref{lemFrontiere restreinte aux Pr}. Let $\pi:\A\rightarrow \A/\R\nu$ be the canonical projection. As $C_1$ is invariant under translation by $\nu$ (by definition of the elements of $\HH$), the set $\psi(C_1)=\overline{\psi}(\pi(C_1))$ is a half-space $D$ of $M$. Let $V''=V'\cap C_1$. Then $\psi(V'')=\overline{\psi}(C_1)\cap \overline{\psi}(\pi(V'))\in \V_{D}(\Fr(x))$. 

Let $g:\vec{M}\rightarrow \R$ be a linear form such that $D=g^{-1}([b,+\infty[)$, for some $b\in \R$. Let $\epsilon \in \{-1,1\}$ be such that $g(u)>0$  for some $u\in \epsilon\T_{M}$. Let $\eta>0$.  Then $\Fr(x+\eta u)\in x+\eta u+\R\nu$ and thus $\Fr(x+\eta u)=\Fr(x)+\eta u+k\nu$ for some $k\in \R$. If $\eta$ is small enough that  $x+\eta u\in V''$, then  $k\nu=\Fr(x+\eta u)-(\Fr(x)+\eta u)\in \vec{M}$ and hence $k=0$ (by Lemma~\ref{lemFrontiere restreinte aux Pr}). Let $\mathcal{K}=\psi(V'')+\R\nu$ and $a=\Fr(x)+\eta u$. Then $\mathcal{K}\in \V_{\mathring{U}}(a)$ and for all $v\in \mathcal{K}$, $\Fr(v)\in M$. By Lemma~\ref{lemInclusion dans un demi appart sur le cone}, \[\Fr(\mathring{U}\cap (a-\epsilon\T_M))=\Fr(\mathring{U}\cap (a-\epsilon\T_M+\R\nu))\subset D_M.\] Moreover, $a-\epsilon\T_M+\R\nu \in \V_{\mathring{U}}(x)$ and thus if one sets $V=V'\cap (a-\epsilon\T_M+\R\nu)$,  $V$ satisfies~\ref{itemDecoupage de V} and~\ref{itemConvexite locale}.

\end{proof}

\begin{figure}
\centering
\includegraphics[height=4cm]{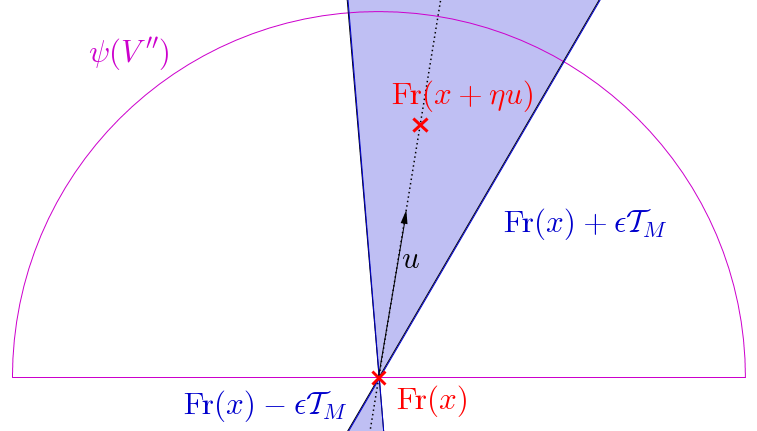}
\hspace{0.5cm}
\includegraphics[height=4cm]{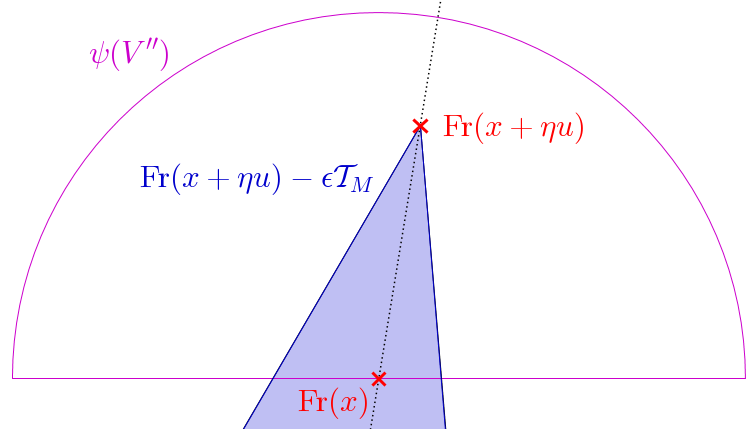}
\caption{Proof of Lemma~\ref{lemConcavite locale} when $\dim H=2$ (the illustration is made in $M$)}\label{figConcavite locale}
\end{figure}

\subsection{Convexity of $A\cap \A$}\label{subsubConvexite sous FrC}

Let $\vec{\HH}=\bigcup_{H\in {\HH}}\vec{H}$ be the set of directions of the hyperplanes of $\HH$.

\begin{lem}\label{lemConvexite pour les bonnes paires}
 Let $x,y\in \mathring{U}\cap A\cap \A$ be such that $y-x\notin \vec{\HH}$ and such that the line spanned by $[x,y]$ does not meet any point of $H_\cap$. Then $[x,y]\subset \mathring{U}\cap A\cap\A$.
\end{lem}

\begin{proof}
Let $\pi:[0,1]\rightarrow \A$ defined by $\pi(t)=tx+(1-t)y$ for all $t\in [0,1]$. Set $g=\Fr \circ \pi$. Let $f_1,\ldots,f_s$ be affine forms on $\A$ such that $\HH=\{f_i^{-1}(\{0\}|\ i\in \llbracket 1,s\rrbracket\}$. As $y-x\notin \vec{\HH}$, for all $i\in \llbracket 1,s\rrbracket$, the map $f_i\circ g$ is strictly monotonic and $\pi^{-1}(\bigcup_{H\in \HH} H)$ is finite. Therefore, there exist $k\in \N$ and open intervals $T_1\ldots,T_k$ such that: \begin{itemize}

\item $[0,1]=\bigcup_{i=1}^k \overline{T_i}$,

\item $T_1< \ldots<T_k$, 

\item for all $i\in \llbracket 1,k\rrbracket$, there exist $R_i\in X$ such that $\pi(T_i)\subset \mathring{P}_{R_i}$.

\end{itemize} For all $t\in [0,1]$, one has $g(t)=\pi(t)+f(t)\nu$ for some $f(t)\in \R$. By Lemma~\ref{lemFrontiere restreinte aux Pr} this equation uniquely determines $f(t)$ for all $t\in [0,1]$. By Lemma~\ref{lemContinuite de la frontiere}, $f$ is continuous and by Lemma~\ref{lemFrontiere restreinte aux Pr}, $f$ is affine on each $T_i$.

Let us prove that $f$ is convex. Let $i\in \llbracket 1,k-1\rrbracket$. One writes $T_i=]a,b[$. Then for $\epsilon>0$ small enough, one has $f(b+\epsilon)=f(b)+\epsilon c_+$ and $f(b-\epsilon)=f(b)-\epsilon c_-$. To prove the convexity of $f$, it suffices to prove that $c_-<c_+$. Let $M$ be a wall containing $\Fr(P_{R_i})$. As $\pi(b)\in \mathring{U}\cap \bigcup_{H\in \HH}H \backslash H_\cap $, we can apply Lemma~\ref{lemConcavite locale} and there exists $V\in \V_{[0,1]}(b)$ such that $g(V)\subset D_M$. Let $h:\A\rightarrow \R$ be a linear map such that $D_M=h^{-1}([a,+\infty[)$. For $\epsilon>0$ small enough, one has $h(g(b+\epsilon))\geq a$ and $h(g(b-\epsilon))= a$. 

For $\epsilon>0$ small enough, one has \[\begin{aligned} h(g(b+\epsilon)) &=h(\pi(b)+\epsilon(y-x)+(f(b)+\epsilon c_+)\nu)\\ &= h(g(b)+\epsilon(y-x+c_+\nu))\\ &= a+\epsilon (h(y-x)+c_+h(\nu))\geq a ,\end{aligned}\]

and similarly, $h(g(b-\epsilon))=a-\epsilon(h(y-x)+c_-h(\nu))=a$. 

Therefore $h(y-x)+c_+h(\nu) \geq 0$, $h(y-x)+c_-h(\nu)=0$ and hence $(c_+-c_-)h(\nu)\geq 0$. As $D_M$ contains a shortening of $\R_+\nu$, $h(\nu)\geq 0$ and by Lemma~\ref{lemFrontiere restreinte aux Pr}, $h(\nu)>0$. Consequently, $c_-\leq c_+$ and, as $i\in \llbracket 1,k-1\rrbracket$ was arbitrary,  $f$ is convex.

For all $t\in [0,1]$, $f(t)\leq (1-t)f(0)+tf(1)$. Therefore \[(1-t)g(0)+tg(1)=\pi(t)+((1-t)f(0)+tf(1))\nu\in \pi(t)+f(t)\nu+\R_+\nu=g(t)+\R_+\nu.\]

By definition of $\Fr$, if $t\in [0,1]$, then $(1-t)g(0)+tg(1)\in A\cap \A$. Moreover, there exist $\lambda,\mu \geq 0$ such that $x= g(0)+\lambda\nu$ and $y=g(1)+\mu\nu$. Then \[\pi(t)=(1-t)x+ty=(1-t)g(0)+tg(1)+((1-t)\lambda+t\mu)\nu\in A\cap \A\] and hence $[x,y]\subset A\cap \A$.

\end{proof}

\begin{lem}\label{lemDensite des bonnes paires de points}
Let $x,y\in \In(\A\cap A)$ and $\vec{\HH}=\bigcup_{H\in \HH}\vec{H}$. Then there exists $(x_n),(y_n)\in \In(A\cap \A)^\N$ satisfying the following conditions:

 \begin{enumerate}
\item $x_n\rightarrow x$ and $y_n\rightarrow y$

\item for all $n \in \N$, $y_n-x_n \notin \vec{\HH}$

\item the line spanned by $[x_n,y_n]$ does not meet any point of $H_\cap$.
\end{enumerate}

\end{lem}

\begin{proof}
Let $(x_n)\in (\In(A\cap\A)\backslash H_\cap)^\N$ be such that $x_n\rightarrow x$. Let $|\ |$ be a norm on $\A$. Let $n\in  \N$. Let $F$ be the set of points $z\in \A$ such that the line spanned by $[x_n,z]$ meets $H_\cap$. Then $F$ is a finite union of hyperplanes of $\A$ (because $H_\cap$ is a finite union of spaces of dimension at most $\dim \A -2$). Therefore $\A\backslash (F\cup x_n+\vec{\HH})$ is dense in $\A$ and one can choose $y_n\in \A\backslash (F\cup x_n+\vec{\HH})$ such that $|y_n-y|\leq \frac{1}{n+1}$. Then $(x_n)$ and $(y_n)$ satisfy the conditions of the lemma.
\end{proof}

\begin{lem}\label{lemConvexite de l'intersection cas nonvide}
The set $A\cap \A$ is convex.
\end{lem}

\begin{proof}
Let $x,y\in \In (A\cap \A)$. Let $(x_n),(y_n)$ be as in Lemma~\ref{lemDensite des bonnes paires de points}. Let $t\in [0,1]$. As $\In (A\cap \A)\subset \mathring{U}$,  one has  $tx_n+(1-t)y_n \in A\cap \A$ for all $n\in \N$, by Lemma~\ref{lemConvexite pour les bonnes paires}. As $A\cap \A$ is closed (by Proposition~\ref{propIntersection fermee}), $tx+(1-t)y\in A\cap\A$. Therefore $\In (A\cap \A)$ is convex. Consequently $A\cap \A=\overline{\In(A\cap \A)}$ (by Lemma~\ref{lemIntersection adherence de son int}) is convex.
\end{proof}

We thus have proved the following theorem:

\begin{thrm}\label{thmIntersection enclose fort}
Let $A$ and $B$ be two apartments sharing a generic ray. Then $A\cap B$ is enclosed and there exists an isomorphism $\phi:A\overset{A\cap B}{\rightarrow} B$.
\end{thrm}

\begin{proof}
By Lemma~\ref{lemConvexite de l'intersection cas nonvide} and Lemma~\ref{lemReduction au cas d'interieur non vide},  $A\cap B$ is convex. By Proposition~\ref{propIntersection enclose quand convexe}, $A\cap B$ is enclosed and by Proposition~\ref{propIsomorphisms fixant une partie convexe}, there exists an isomorphism $\phi:A\overset{A\cap B}{\rightarrow}B$.
\end{proof}

\subsection{A partial reciprocal}

One says that a group $G$ of automorphisms of $\I$ acts strongly transitively on $\I$ if the isomorphisms involved in (MA2) and (MA4) are induced by elements of $G$. For example if $G$ is a quasi-split Kac-Moody group over an ultrametric field $\mathcal{K}$, it acts strongly transitively on the associated masure  $\I(G,\mathcal{K})$. 

We now prove a kind of weak reciprocal of Theorem~\ref{thmIntersection enclose fort} when some group $G$ acts strongly transitively on $\I$ and when $\I$ is thick, which means that each panel is contained in at least three chambers. This implies some restrictions on $\Lambda'$ by Lemma~\ref{lemDemi-apparts comme intersections d'apparts} below and Remark~\ref{rqueSur les vrais murs}.

\begin{lem}\label{lemFrontiere et ensemble minimal definissant une partie enclose}
Let $P$ be an enclosed subset of $\A$ and suppose that $\mathring{P}\neq \emptyset$. One fixes the origin of $\A$ in some point of $\mathring{P}$. Let $j_P$ be the gauge of $P$ defined in Section~\ref{subEnclosite d'une intersection convexe}. Let $U=\{x\in \A| j_P(x)\neq 0\}$. One defines $\Fr:U\rightarrow P$ as in Lemma~\ref{lemDescription de la frontiere par jauge}. One writes $P=\bigcap_{i=1}^k D_i$, where the $D_i$'s are half-apartments of $\A$. Let $j\in \llbracket 1,k\rrbracket$, $M_j$ be the wall of $D_j$ and suppose that for all open subset $V$ of $U$, $\Fr(V)\nsubseteq M_j$. Then $P=\bigcap_{i\in \llbracket 1,k\rrbracket \backslash \{j\}} D_i$.
\end{lem}

\begin{proof}
Suppose that $ P\nsubseteq \bigcap_{i\in \llbracket 1,k\rrbracket \backslash \{j\}} D_i $. Let $V$ be a nonempty open and bounded subset contained in $ \bigcap_{i\in \llbracket 1,k\rrbracket \backslash \{j\}} D_i\backslash P$. Let $n\in \Ne$ be such that $\frac{1}{n}V\subset P$. Let $v\in V$. Then $[\frac{1}{n}v,v]\cap \Fr(P)=\{\Fr(v)\}$. Moreover for all $i\in \llbracket 1,k\rrbracket \backslash \{j\}$, $[\frac{1}{n}v,v]\subset \mathring{D}_i$. As $\Fr(P)\subset \bigcup_{i\in \llbracket 1,k\rrbracket} M_i$, we deduce that $\Fr(v)\in M_j$: this is absurd and thus $P=\bigcap_{i\in \llbracket 1,k\rrbracket \backslash \{j\}} D_i$.

\end{proof}

\begin{lem}\label{lemDemi-apparts comme intersections d'apparts}
Suppose that $\I$ is thick. Let $D$ be a half-apartment of $\A$. Then there exists an apartment $A$ of $\A$ such that $D=A\cap \A$.
\end{lem}

\begin{proof}
Let $F$ be a panel of the wall of $D$. As $\I$ is thick, there exists a chamber $C$ dominating $F$ and such that $C\nsubseteq \A$. By Proposition 2.9 1) of \cite{rousseau2011masures}, there exists an apartment $A$ containing $D$ and $C$. The set $\A\cap A$ is a half-apartment by Lemma~\ref{lemIntersection de deux apparts contenant un demi-appart} and thus $\A\cap A=D$, which proves the lemma.
\end{proof}

\begin{prop}\label{propIntersection de deux apparts}
Suppose that $\I$ is thick and that some group $G$ acts strongly transitively on $\I$. Let $P$ be an enclosed subset of $\A$ containing a generic ray and having nonempty interior. Then there exists an apartment $A$ such that $A\cap \A=P$.
\end{prop}

\begin{proof}
One writes $P=D_1\cap \ldots \cap D_k$, where the $D_i$'s are half-apartments of $\A$. One supposes that $k$ is minimal for this writing, which means that for all $i\in \llbracket 1, n\rrbracket$, $P\neq \bigcap_{j\in \llbracket 1,k\rrbracket\backslash  \{i\}} D_j$. For all $i\in \llbracket 1,n\rrbracket$, one chooses an apartment $A_i$ such that $\A\cap A_i =D_i$. Let $\phi_i:\A\overset{D_i}{\rightarrow} A_i$ and $g_i\in G$ inducing $\phi_i$.

Let $g=g_1 \ldots g_k$ and $A=g.\A$.  Then $A\cap \A\supset D_1\cap \ldots \cap D_k$ and $g$ fixes $D_1\cap \ldots \cap D_k$. Let us show that $A\cap \A=\{x\in \A| g.x=x\}$. By Theorem~\ref{thmIntersection enclose fort}, there exists $\phi:\A\overset{A\cap \A}{\rightarrow} A$. Moreover $g_{|\A}^{-1}\circ \phi:\A \rightarrow \A$ fixes $D_1\cap \ldots \cap D_k$, which has nonempty interior and thus $g_{|\A}^{-1}\circ \phi=\Id_{\A}$, which proves that $A\cap \A=\{x\in \A| g.x=x\}$.

 Suppose that $A\cap \A \supsetneq D_1\cap \ldots \cap D_k$. Let $i\in \llbracket 1,k\rrbracket$ be such that there exists $a\in A\cap \A\backslash D_i$.

One fixes the origin of $\A$ in some point of $\mathring{P}$, one sets $U=\{x\in \A|\ j_P(x)\neq 0\}$ and one defines $\Fr:U\rightarrow \Fr(P)$ as in Lemma~\ref{lemDescription de la frontiere par jauge}. By minimality of $k$ and Lemma~\ref{lemFrontiere et ensemble minimal definissant une partie enclose}, there exists a nonempty open set $V$ of $U$ such that $\Fr(V)\subset M_i$.

By the same reasoning as in the proof of Lemma~\ref{lemInclusion dans un demi-apart}, $\Fr(V)\cap M_i$ is open in $M_i$.  Consequently, there exists $v\in \Fr(V)$ such that $v\notin \bigcup_{j\in \llbracket 1,k\rrbracket \backslash \{i\}} M_j$. Let $V'\in \V_U(v)$ be such that $V'\cap \bigcup_{j\in \llbracket 1,k\rrbracket \backslash \{i\}} M_j=\emptyset$ and such that $V'$ is convex. Then $V'\subset \bigcap_{j\in \llbracket 1,k\rrbracket\backslash \{i\}} \mathring{D_j}$. Let  $V''=\Fr(V)\cap V'$. By Theorem~\ref{thmIntersection enclose fort}, $[a,v]\subset A\cap \A$ and hence $g$ fixes $[a,v]$. Moreover for $u\in [a,v]$ near $v$, one has $u\in \bigcap_{j\in \llbracket 1,k\rrbracket\backslash \{i\}} D_j$. Then $g.u=g_1\ldots g_i. (g_{i+1}\ldots g_k.u)=g_1\ldots g_i.u$. Moreover, $g_i.u=g_{i-1}^{-1}.\ldots.g_1^{-1}.u=u$. Therefore $u\in D_i$, which is absurd by choice of $u$.
\end{proof}

\begin{remark}\label{rqueSur les intersections d'apparts}
\begin{enumerate}

\item In the proof above, the fact that $P$ contains a generic ray is only used to prove that $A\cap \A$ is convex and that there exists an isomorphism $\phi:A\overset{A\cap \A}{\rightarrow} \A$. When $G$ is an affine Kac-Moody group and $\I$ is its masure, we will see that these properties are true without assuming that $A\cap \A$ contains a generic ray. Therefore, for any enclosed subset $P$ of $\A$ having nonempty interior, there exists an apartment $A$ such that $A\cap \A=P$

\item Let $\mathbb{T}$ be a discrete homogeneous tree with valence $3$ and $x$ be a vertex of $\mathbb{T}$. Then there exists no pair $(A,A')$ of apartments such that $A\cap A'=\{x\}$. Indeed, let $A$ be an apartment containing $x$ and $C_1,C_2$ be the alcoves of $A$ dominating $x$. Let $A'$ be an apartment containing $x$. If $A'$ does not contain $C_1$, it contains $C_2$ and thus $A\cap A'\neq \{x\}$. Therefore the hypothesis ``$P$ has nonempty interior'' is necessary in Proposition~\ref{propIntersection de deux apparts}.

\end{enumerate}

\end{remark}

\section{Axiomatic of masures}\label{secAxiomatique des masures}

\subsection{Axiomatic of masures in the general case}
The aim of this section is to give an other axiomatic of masure than the one of \cite{rousseau2011masures} and \cite{rousseau2017almost}. For this, we mainly use Theorem~\ref{thmIntersection enclose fort}.

We fix an apartment $\A=(\mathcal{S},W,\Lambda')$.  A \textbf{construction} of type $\A$ is a set endowed with a covering of subsets called apartments and satisfying (MA1).

\medskip

Let $\cl\in \mathcal{CL}_{\Lambda'}$. Let (MA i)=(MA1). 

Let (MA ii) : if two apartments $A,A'$ contain a generic ray, then $A\cap A'$ is enclosed and there exists an isomorphism $\phi:A\overset{A\cap A'}{\rightarrow} A'$.

Let (MA iii, $\cl$): if $\RR$ is the germ of a splayed chimney and if $F$ is a face or a germ of a chimney, then there exists an apartment containing $\RR$ and $F$.

\medskip

It is easy to see that the axiom (MA ii) implies (MA4, cl) for all $\cl\in \mathcal{CL}_{\Lambda'}$. If $\cl \in \mathcal{CL}_{\Lambda'}$, then  (MA iii, $\cl$) is equivalent to (MA3, $\cl$) because each chimney is contained in a solid chimney.

\medskip

Let $\I$ be a construction of type $\A$ and $\cl\in \mathcal{CL}_{\Lambda'}$. One says that $\I$ is a \textbf{masure of type }$(1,\cl)$ if it satisfies the axioms of \cite{rousseau2011masures}:  (MA2, $\cl$), (MA3, $\cl$), (MA4, $\cl$) and (MAO). One says that $\I$ is a \textbf{masure of type }$(2,\cl)$ if it satisfies  (MA ii) and (MA iii, $\cl$).

The aim of the next two subsections is to prove the following theorem: 

\begin{thrm}\label{thmAxiomatique des masures}
Let $\I$ be a construction of type $\A$ and $\cl\in \mathcal{CL}_{\Lambda'}$. Then $\I$ is a masure of type $(1,\cl)$ if and only if $\I$ is a masure of type $(1,\cl^\#)$ if and only if $\I$ is a masure of type $(2,\cl)$ if and only if $\I$ is a masure of type $(2,\cl^\#)$.
\end{thrm}

Let us introduce some other axioms and definitions. An \textbf{extended chimney} of $\A$ is associated to a local face $F^l=F^\ell(x,F_0^v)$ (its \textbf{basis}) and a vectorial face (its \textbf{direction}) $F^v$, this is the filter $\mathfrak{r}_{e}(F^\ell,F^v)=F^\ell+F^v$. Similarly to classical chimneys, we define shortenings and germs of extended chimney. We use the same vocabulary for extended chimneys as for classical: splayed, solid, full, ... We use the isomorphisms of apartments to extend these notions in constructions. Actually each classical chimney is of the form $\cl(\mathfrak{r}_e)$ for some extended chimney $\mathfrak{r}_e$.

\medskip

Let $\cl\in \mathcal{CL}_{\Lambda'}$. Let (MA2', $\cl$): if $F$ is a point, a germ of a preordered interval or a splayed chimney in an apartment $A$ and if $A'$ is another apartment containing $F$ then $A\cap A'$ contains the enclosure $\cl_A(F)$ of $F$ and there exists an isomorphism from $A$ onto $A'$ fixing $\cl_A(F)$.

Let (MA2'', $\cl$): if $F$ is a solid chimney in an apartment $A$ and if $A'$ is an other apartment containing $F$ then $A\cap A'$ contains the enclosure $\cl_A(F)$ of $F$ and there exists an isomorphism from $A$ onto $A'$ fixing $\cl_A(F)$.

The axiom (MA2, $\cl$) is a consequence of (MA2', $\cl$), (MA2'', $\cl$) and (MA ii).

\medskip

Let (MA iii'): if $\RR$ is the germ of a splayed extended chimney and if $F$ is a local face or a germ of an extended chimney, then there exists an apartment containing $\RR$ and $F$.
\medskip 

Let  $\I$ be a construction. Then $\I$ is said to be a  \textbf{masure of type $3$} if it satisfies (MA ii) and (MA iii').

In order to prove Theorem~\ref{thmAxiomatique des masures}, we will in fact prove the following stronger theorem:

\begin{thrm}\label{thmAxiomatique des masures forte}
Let $\cl\in \mathcal{CL}_{\Lambda'}$ and $\I$ be a construction of type $\A$. Then $\I$ is a masure of type $(1,\cl)$ if and only $\I$ is a masure of type $(2,\cl)$ if and only if $\I$ is a masure of type $3$.
\end{thrm}

The proof of this theorem will be divided in two steps. In the first step, we prove that (MAO) is a consequence of variants of (MA1), (MA2), (MA3) and (MA4) (see Proposition~\ref{propSuppression de MAO} for a precise statement). This uses paths but not Theorem~\ref{thmIntersection enclose fort}. In the second step, we prove the equivalence of the three definitions. One implication relies on Theorem~\ref{thmIntersection enclose fort}.

\subsubsection{Dependency of (MAO)}
The aim of this subsection is to prove the following proposition:

\begin{prop}\label{propSuppression de MAO}
Let $\I$ be a construction of type $\A$ satisfying (MA2'), (MA iii') and (MA4). Then $\I$ satisfies (MAO).
\end{prop}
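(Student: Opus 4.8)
The plan is to compare the two segments by retracting them onto $A$ and appealing to the uniqueness of $\lambda$-paths. First I would record that the retraction formalism of~\ref{subsecRétractions} is available under our hypotheses: a sector-germ is the germ of a splayed extended chimney and a point is a local face, so (MA iii') produces, for each $z\in\I$ and each sector-germ $\s$ of an apartment, an apartment containing $z$ and $\s$, while (MA4) supplies an isomorphism onto that apartment fixing $\s$; this yields well-defined retractions $\rho_{A,\s}:\I\to A$ (and the analogous ones onto $A'$). Choosing $f\in\mathrm{Isom}(\A,A)$ and composing with an element of $W^v$, I may arrange the identification of $A$ with $\A$ so that $\lambda:=y-x=d^v_A(x,y)\in\overline{C^v_f}$. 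Write $\tau(t)=x+t\lambda$ for the parametrization of $[x,y]_A$, let $\tau'$ parametrize $[x,y]_{A'}$, and let $\s_+,\s_-$ be the images under $f$ of the two opposite sector-germs $+\infty,-\infty$.

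The first substantial step is to prove $x\leq_{A'}y$ and $d^v_{A'}(x,y)=\lambda$. Fixing a sector-germ $\q$ of $A'$ and applying Lemma~\ref{lemImage d'un segment par une rétraction} (after identifying $A'$ with $\A$) to the preordered segment $[x,y]_A$, the image $\rho_{A',\q}\circ\tau$ is a $\lambda$-path in $A'$; since $x,y\in A'$ its endpoints are $x$ and $y$, so the $A'$-displacement $v:=(y-x)_{A'}$ is a finite convex combination of elements of $W^v.\lambda$, that is $v\in\conv(W^v.\lambda)$. As $\conv(W^v.\lambda)\subset\T$, this gives $x\leq_{A'}y$. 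Let $\mu:=d^v_{A'}(x,y)$ be the dominant representative of $v$; since $\conv(W^v.\lambda)$ is $W^v$-stable, $\mu\in\conv(W^v.\lambda)$, and because each $w.\lambda\leq_{Q^\vee}\lambda$ by Lemma~\ref{lem2.4 a de GR14} while the sublevel set $\{z\mid z\leq_{Q^\vee}\lambda\}$ is convex, every element of $\conv(W^v.\lambda)$ is $\leq_{Q^\vee}\lambda$; hence $\mu\leq_{Q^\vee}\lambda$. The argument is now symmetric: retracting $[x,y]_{A'}$ onto $A$ (legitimate since $x\leq_{A'}y$) exhibits $\lambda$ as a convex combination of elements of $W^v.\mu$, whence $\lambda\leq_{Q^\vee}\mu$. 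By antisymmetry of $\leq_{Q^\vee}$ we conclude $\lambda=\mu$.

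With this equality in hand, I would finish as follows. Retracting $[x,y]_{A'}$ onto $A$ via $\s_+$, Lemma~\ref{lemImage d'un segment par une rétraction} shows $\rho_{A,\s_+}\circ\tau'$ is a $\lambda$-path from $x$ to $x+\lambda$, so Lemma~\ref{lemLambda chemin de x à x+ lambda} forces $\rho_{A,\s_+}\circ\tau'=\pi^x_\lambda=\tau$; the same holds with $\s_-$. Therefore $\rho_{A,\s_+}(\tau'(t))=\tau(t)=\rho_{A,\s_-}(\tau'(t))$ for every $t\in[0,1]$, and the characterization of the points of $A$ (Proposition~\ref{propCaractérisation des points de l'appartement standard}, transported to $A$ by $f$) yields $\tau'(t)\in A$. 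Since $\rho_{A,\s_+}$ fixes $A$ pointwise, $\tau'(t)=\rho_{A,\s_+}(\tau'(t))=\tau(t)$, so $[x,y]_{A'}=[x,y]_A$, which is exactly (MAO).

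The hard part is the equality $d^v_A(x,y)=d^v_{A'}(x,y)$ of the second paragraph: a priori a retraction folds the straight segment into a genuinely broken $\lambda$-path, and nothing obvious prevents its endpoints from having a strictly smaller dominant vectorial distance. The device that resolves this is that the displacement of any $\lambda$-path lies in $\conv(W^v.\lambda)$, all of whose points are $\leq_{Q^\vee}\lambda$ (Lemma~\ref{lem2.4 a de GR14}); applying this in both directions and using antisymmetry of $\leq_{Q^\vee}$ pins the vectorial distance down, after which the uniqueness of $\lambda$-paths (Lemma~\ref{lemLambda chemin de x à x+ lambda}) does the rest. A final point to verify, and where the remaining hypothesis (MA2') enters, is that the preliminary results invoked above (the retraction formalism and Proposition~\ref{propCaractérisation des points de l'appartement standard}, whose proofs rest on isomorphisms fixing preordered subsegments) indeed hold under (MA2'), (MA iii') and (MA4) alone, rather than under the full axioms of a masure.
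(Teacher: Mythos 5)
Your overall strategy is the right one, and it is essentially the paper's: retract the segment of the second apartment onto the first, pin down the displacement by a two-way $\leq_{Q^\vee}$ comparison (Lemma~\ref{lem2.4 a de GR14} plus antisymmetry), invoke uniqueness of $\lambda$-paths (Lemma~\ref{lemLambda chemin de x à x+ lambda}), and conclude that the segment already lies in the first apartment. The problem is that, as written, the argument is circular, and the circularity is exactly what you defer in your last sentence as ``a final point to verify''. Lemma~\ref{lemImage d'un segment par une rétraction} is proved in the paper by choosing isomorphisms fixing whole subsegments via Proposition~5.4 of \cite{rousseau2011masures}, and that proposition is proved there using (MAO) --- the very axiom you are trying to derive; moreover its statement presupposes that $\leq$ and $d^v$ are well defined on $\I$, which under (MA2'), (MA iii') and (MA4) alone is not yet known (the paper stresses that Théorème~5.9 of \cite{rousseau2011masures} uses (MAO)). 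The same objection applies to Proposition~\ref{propCaractérisation des points de l'appartement standard}: its proof uses Lemma~\ref{lemImage d'un segment par une rétraction} together with results of \cite{hebertGK} (the map $y_\nu$, Lemmas~3.5 and 3.6 there) that are established only for masures satisfying the full axioms. So both pillars of your second and third paragraphs are unavailable in the setting of the proposition.

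This verification is not a routine check; it is the main technical content of the paper's proof. Note that (MA2') only provides isomorphisms fixing enclosures of \emph{germs} of preordered intervals, not of whole segments; upgrading from germs to segments is done in Lemma~\ref{lemProposition 2.7 de rou11} by a compactness argument using (MA2') and (MA iii'), which then yields the weak-axiom retraction lemma, Lemma~\ref{lemImage d'un segment par une rétraction 2} --- necessarily stated relative to a fixed apartment and a chosen isomorphism, precisely because $d^v$ is not yet known to be well defined. With that substitution, your second paragraph becomes the paper's Lemma~\ref{lemImage d'un segment ordonné par une rétraction}. For the conclusion, the paper also avoids your double-retraction device: instead of showing that the images under $\rho_{+\infty}$ and $\rho_{-\infty}$ coincide and appealing to Proposition~\ref{propCaractérisation des points de l'appartement standard}, it uses a single retraction $\rho_{-\infty}$ together with Lemma~\ref{lem3.6 de HébertGk} (a re-proof, under the weak axioms, of Lemma~3.6 of \cite{hebertGK}), which says that a preordered segment with endpoint in $\A$ whose $\rho_{-\infty}$-image is straight with direction in $\overline{C^v_f}$ lies entirely in $\A$. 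Your plan can be repaired by proving and substituting these weak-axiom lemmas; repairing it along your own route, i.e.\ re-proving Proposition~\ref{propCaractérisation des points de l'appartement standard} under (MA2'), (MA iii') and (MA4), would be substantially harder, since it would require redoing the relevant parts of \cite{hebertGK} as well.
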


We now fix a construction $\I$ of type $\A$ satisfying (MA2'), (MA iii') and (MA4). To prove proposition above, the key step  is to prove that if $B$ is an apartment and if $x,y\in \A\cap B$ are such that $x\leq_\A y$, then the image by $\rho_{-\infty}$ of the segment of $B$ joining $x$ to $y$ is a $(y-x)^{++}$-path, where if $u\in \T$, $u^{++}$ is the unique element of $W^v.u\cap \overline{C^v_f}$.

Let $a,b\in \A$. An $(a,b)$\textbf{-path of }$\A$ is a continuous piecewise linear map $[0,1]\rightarrow \A$ such that for all $t\in [0,1[$, $\pi'(t)^+\in W^v.(b-a)$. When $a\leq b$, the $(a,b)$-paths are the $(b-a)^{++}$-paths defined in~\ref{subsubCaracterisation des points de A}.

Let $A$ be an apartment an $\pi:[0,1]\rightarrow A$ be a map. Let $a,b\in A$. One says that $\pi$ is an $(a,b)$\textbf{-path} of $A$  if there exists $\Upsilon:A\rightarrow \A$ such that $\Upsilon\circ \pi$ is a $(\Upsilon(a),\Upsilon(b))$-path of $\A$.

\begin{lem}\label{lemImage d'un lambda seg par un iso}
Let $A$ be an apartment and  $a,b\in A$. Let $\pi:[0,1]\rightarrow A$ be an $(a,b)$-path in $A$ and $f:A\rightarrow B$ be an isomorphism of apartments. Then $f\circ \pi$ is a $(f(a),f(b))$-path.
\end{lem}

\begin{proof}
Let $\Upsilon:A\rightarrow \A$ be an isomorphism such that $\Upsilon\circ \pi$ is a $(\Upsilon(a),\Upsilon(b))$-path in $\A$. Then $\Upsilon'=\Upsilon\circ f^{-1}:B\rightarrow \A$ is an isomorphism, $\Upsilon'\circ f\circ \pi$ is a $(\Upsilon'(f(a)),\Upsilon'(f(b)))$-path in $\A$ and we get the lemma.
\end{proof}

The following lemma slightly improves Proposition 2.7 1) of \cite{rousseau2011masures}. We recall that if $A$ is an affine space and $x,y\in A$, $[x,y)$ means the germ $germ_x([x,y])$, $(x,y]$ means $germ_y([x,y])$, ..., see~\ref{subNotation}.
 
\begin{lem}\label{lemProposition 2.7 de rou11}
Let $\RR$ be the germ of a splayed extended chimney, $A$ be an apartment of $\I$ and $x^-,x^+\in A$ be such that $x^-\leq _A x^+$. Then there exists a subdivision $z_1=x^-,\ldots, z_n=x^+$ of $[x^-,x^+]_A$ such that for all $i\in \llbracket 1,n-1\rrbracket$ there exists an apartment $A_i$ containing $[z_i,z_{i+1}]_A\cup \RR$ such that there exists an isomorphism $\phi_i:A\overset{[z_i,z_{i+1}]_{A_i}}{\rightarrow} A_i$. 
\end{lem}

\begin{proof}
Let $u\in [x^-,x^+]$. By (MA iii'), applied to $(x^-,u]$ and $[u,x+)$ there exist  apartments $A_u^-$ and $A_u^+$ containing $\RR\cup (x^-,u]$ and $\RR\cup [u,x^+)$ and by (MA2'), there exist isomorphisms $\phi_u^+:A\overset{(x^-,u]}{\rightarrow }A_u^-$ and $\phi_u^-:A\overset{[u,x^+)}{\rightarrow} A_u^+$. For all $u\in [x^-,x^+]$ and $\epsilon\in \{-,+\}$, one chooses a convex set 
$V_u^\epsilon\in [u,x^\epsilon)$  such that $V_u^\epsilon\subset A\cap A_u^\epsilon$   and $V_u^\epsilon$ is fixed by $\phi_u^\epsilon$. If $u\in [x^-,x^+]$, one sets $V_u=\In_{[x^-,x^+]_A} (V_u^+\cup V_u^-)$. By compactness of $[x^-,x^+]$, there exists a finite set $K$ and a map $\epsilon: K\rightarrow \{-,+\}$ such that $[x^-,x^+]=\bigcup_{k\in K} V_k^{\epsilon(k)}$ and the lemma follows.
\end{proof}

Let $\q$ be a sector-germ. Then $\q$ is an extended chimney. Let $A$ be an apartment containing $\q$. The axioms (MA2'), (MA iii') and (MA4) enable one to define a retraction $\rho:\I\overset{\q}{\rightarrow}\A$ as in 2.6 of \cite{rousseau2011masures}. 

\begin{lem}\label{lemImage d'un segment par une retraction 2}
Let $A$ and $B$ be two apartments, $\q$ be a sector-germ of $B$ and $\rho:\I\overset{\q}{\rightarrow} B$. Let $x,y\in A$ be such that $x\leq_A y$. Let $\tau:[0,1]\rightarrow A$ mapping each $t\in [0,1]$ on $(1-t)x+_Aty$ and $f:A\rightarrow B$ be an isomorphism. Then $\rho\circ \tau$ is a $(f(x),f(y))$-path of $B$.

\begin{proof}
By Lemma~\ref{lemProposition 2.7 de rou11}, there exist $k\in \N$ and $t_1=0<\ldots <t_k=1$ such that for all $i\in \llbracket 1,k-1\rrbracket$, there exists an apartment $A_i$ containing $\tau([t_i,t_{i+1})])\cup \q$ such that there exists an isomorphism $\phi_i:A\overset{\tau([t_i,t_{i+1}])}{\rightarrow} A_i$.

If $i\in \llbracket 1,k-1\rrbracket$, one denotes by $\psi_i$ the isomorphism $A_i\overset{\q}{\rightarrow} B$. Then for $t\in [t_i,t_{i+1}]$, one has $\rho(\tau(t))=\psi_i\circ \phi_i(\tau(t))$. Let $\Upsilon: B \rightarrow \A$ be an isomorphism. By (MA1), for all $i\in \llbracket 1,k\rrbracket$, there exists $w_i\in W$ such that $\Upsilon\circ \psi_i\circ \phi_i=w_i\circ \Upsilon\circ f$.

Let $i\in \llbracket 1,k-1\rrbracket $ and $t\in [t_i,t_{i+1}]$. Then \[ \Upsilon\circ \rho\circ \tau(t)=\Upsilon\circ \psi_i\circ \phi_i\circ \tau(t)=(1-t) w_i\circ \Upsilon\circ f(x)+t w_i \circ \Upsilon \circ f(y) .\]

Therefore $\rho\circ \tau$ is a $(f(x),f(y))$-path in $B$.
\end{proof}
\end{lem}

\begin{lem}\label{lemArrivee d'un Lambda chemin}
Let $\lambda\in \overline{C^v_f}$ and $\pi:[0,1]\rightarrow \A$ be a $\lambda$-path. Then $\pi(1)-\pi(0)\leq_{Q^\vee} \lambda$.
\end{lem}

\begin{proof}
By definition, there exists $k\in \N$, $(t_i)\in [0,1]^k$ and $(w_i)\in (W^v)^k$ such that $\sum_{i=1}^k t_i =1$ and $\pi(1)-\pi(0)=\sum_{i=1}^k t_i.w_i.\lambda$. Therefore $\pi(1)-\pi(0)-\lambda=\sum_{i=1}^k t_i(w_i.\lambda-\lambda)$ and thus $\pi(1)-\pi(0)-\lambda\leq _{Q^\vee} 0$ by Lemma~\ref{lem2.4 a de GR14}.
\end{proof}

\begin{lem}\label{lemImage d'un segment ordonne par une retraction}
Let $x,y\in \A$  be such that $x\leq _{\A}y$ and $B$ be an apartment containing $x,y$. Let $\tau_B:[0,1]\rightarrow B$ defined by $\tau_B(t)=(1-t)x+_B ty$. Let $\mathfrak{s}$ be a sector-germ of $\A$  and $\rho_{\mathfrak{s}}:\I\overset {\mathfrak{s}}{\rightarrow}\A$. Then $x\leq_B y$ and  $\pi_\A:= \rho_{\mathfrak{s}}\circ \tau_B$ is an $(x,y)$-path of $\A$.
\end{lem}

\begin{proof}
Maybe changing the choice of $\overline{C^v_f}$, one can suppose that $y-x\in \overline{C^v_f}$. Let $\q$ be a sector-germ of $B$, $\rho_B:\I\overset{\q}{\rightarrow} B$ and $\tau_\A:[0,1]\rightarrow \A$ defined by $\tau_\A(t)=(1-t)x+ty$. Let $\phi:\A\rightarrow B$. By Lemma~\ref{lemImage d'un segment par une retraction 2}, $\pi_B:=\rho_B\circ \tau_\A$ is a $(\phi(x),\phi(y))$-path of $B$ from $x$ to $y$. Therefore $x\leq _B y$.  Let $\psi=\phi^{-1}:B\rightarrow \A$. Composing $\phi$ by some $w\in W^v$ if necessary, one can suppose that $\psi(y)-\psi(x)\in \overline{C^v_f}$. 

By Lemma~\ref{lemImage d'un segment par une retraction 2}, $\pi_\A$ is a $(\psi(x),\psi(y))$-path of $\A$. By Lemma~\ref{lemArrivee d'un Lambda chemin}, we deduce that $y-x\leq_{Q^\vee}\psi(y)-\psi(x)$. 

By Lemma~\ref{lemImage d'un lambda seg par un iso}, $\psi\circ \pi_B$ is an $(x,y)$-path of $\A$ from $\psi(x)$ to $\psi(y)$. By Lemma~\ref{lemArrivee d'un Lambda chemin}, we deduce that $\psi(y)-\psi(x)\leq_{Q^\vee} y-x$. Therefore $x-y=\psi(x)-\psi(y)$ and $\pi_\A$ is an $(x,y)$-path of $\A$.
\end{proof}

If $x,y\in \I$, one says that $x\leq y$ if there exists an apartment $A$ containing $x,y$ and such that $x\leq_A y$. By  Lemma~\ref{lemImage d'un segment ordonne par une retraction}, this does not depend on the choice of $A$: if $x\leq y$ then for all apartment $B$ containing $x,y$, one has $x\leq_B y$. However, one does not know yet that $\leq $ is a preorder: the proof of Th\'eor\`eme 5.9 of \cite{rousseau2011masures} uses (MAO).

\medskip

The following lemma is Lemma~3.6 of \cite{hebertGK}: 

\begin{lem}\label{lem3.6 de HebertGk}
Let $\tau:[0,1]\rightarrow \I$ be a segment such that $\tau(0)\leq \tau(1)$, such that $\tau(1)\in \A$ and such that there exists $\nu\in\overline{C^v_f}$ such that  $(\rho_{-\infty}\circ \tau)' =\nu$. Then $\tau([0,1])\subset \A$ and thus $\rho_{-\infty}\circ \tau =\tau$.
\end{lem}

\begin{proof}
Let $A$ be an apartment such that $\tau$ is a segment of $A$. Then $\tau$ is increasing for $\leq_A$ and thus $\tau$ is increasing for $\leq$. Let $x,y\in A$ be such that $\tau(t)=(1-t)x+ty$ for all $t\in [0,1]$. Let us first prove that $\tau$ is increasing for $\leq$. It suffices to prove that $x\leq y$. By (MA iii'), there exists $u\in ]0,1]$ such that there exists an apartment $A$ containing $\tau([0,u])$ and $-\infty$. Let $\phi:A\overset{-\infty}{\rightarrow} \A$. One has \[\phi(\tau(u))=\rho_{-\infty}(\tau(u))=\rho_{-\infty}(\tau(0))+u\nu=\phi(\tau(0))+u\nu,\] thus $\phi(\tau(u))\geq \phi(\tau(0))$ and hence $\tau(u)\geq \tau(0)$. As $\tau$ is a segment of $A$, it suffices to prove that there exists $u>0$ such that $\tau(u)\geq \tau(0)$.  Therefore $\tau$ is increasing for $\leq$.

Suppose that $\tau([0,1])\nsubseteq \A$. Let $u=\sup \{t\in [0,1]|\tau(t)\notin \A\}$. Let us prove that $\tau(u)\in \A$. If $u=1$, this is our hypothesis. Suppose $u<1$. Then by (MA2') applied to $\big]\tau(u),\tau(1)\big)$, $\A$ contains $\cl_A\big(]\tau(u),\tau(1))\big)$ and thus $\A$ contains $\tau(u)$.

By (MA iii'), there exists an apartment $B$ containing $\tau((0,u])\cup -\infty$ and by (MA4), there exists an isomorphism $\phi:B\overset{\tau(u)-\overline{C^v_f}}{\rightarrow} \A$. For all $t\in [0,u]$, near enough from $u$, one has $\phi(\tau(t))=\rho_{-\infty}(\tau(t))$. By hypothesis, for all $t\in [0,u]$, $\rho_{-\infty}(\tau(t)) \in \tau(u)-\overline{C^v_f}$. Therefore for $t$ near enough from $u$,  $\phi(\tau(t))=\tau(t)\in \A$: this is absurd by choice of $u$ and thus $\tau([0,1])\subset \A$.
\end{proof}

We can now prove Proposition~\ref{propSuppression de MAO}: $\I$ satisfies (MAO).

\begin{proof}
Let $x,y\in \A$ be  such that $x\leq_\A y$ and $B$ be an apartment containing $\{x,y\}$. We suppose that $y-x\in \overline{C^v_f}$. Let $\pi_\A:[0,1]\rightarrow \A$ mapping each $t\in [0,1]$ on $\rho_{-\infty}((1-t)x+_B ty)$. By Lemma~\ref{lemImage d'un segment ordonne par une retraction},  $\pi_\A$ is an $(x,y)$-path from $x$ to $y$. By Lemma~\ref{lemLambda chemin de x à x+ lambda}, $\pi_\A(t)=x+t(y-x)$ for all $t\in [0,1]$.  Then by Lemma~\ref{lem3.6 de HebertGk}, $\pi_\A(t)=(1-t)x+_B ty$ for all $t\in [0,1]$. In particular $[x,y]=[x,y]_B$ and thus $\I$ satisfies (MAO).
\end{proof}

\subsubsection{Equivalence of the axiomatics}

As each chimney or face contains an extended chimney or a local face of the same type, if $\cl\in \mathcal{CL}_{\Lambda'}$, (MA iii, $\cl$) implies (MA iii').  Therefore a masure of type $(2,\cl)$ is also a masure of type $3$. 

If $A$ is an apartment and $F$ is a filter of $A$, then $\cl_A(F)\subset \cl_A^\#(F)$. Therefore for all $\cl\in \mathcal{CL}_{\Lambda'}$,  (MA2', $\cl^\#$) implies (MA2', $\cl$) and (MA iii, $\cl^\#$) implies (MA iii, $\cl$).

\begin{lem}\label{lemType 1 implique Type 2}
Let $\cl\in \mathcal{CL}_{\Lambda'}$ and $\I$ be a masure of type $(1,\cl)$. Then $\I$ is a masure of type $(2,\cl)$.
\end{lem}

\begin{proof}
By Theorem~\ref{thmIntersection enclose fort}, $\I$ satisfies (MA ii). By cons\'equence 2.2 3) of \cite{rousseau2011masures}, $\I$ satisfies (MA iii, $\cl$).
\end{proof}

By abuse of notation if $\I$ is a masure of any type and if $\q$, $\q'$ are adjacent sectors of $\I$, we denote by $\q\cap \q'$ the maximal face of $\overline{\q}\cap \overline{\q'}$. This has a meaning by Section 3 of \cite{rousseau2011masures} for masures of type $1$ and by (MA ii) for masures of type $2$ and $3$.

\begin{lem}\label{lemType 3 donne (MA2 face)}
Let  $\I$ be a masure of type $3$. Let $A$ be an apartment. Let $\mathcal{X}$ be a filter of $A$ such that for all sector-germ $\s$ of $\I$, there exists an apartment containing $\mathcal{X}$ and $\s$. Then if $B$ is an apartment containing $\mathcal{X}$, $B$ contains $\cl^\#(\mathcal{X})$ and there exists an isomorphism $\phi:A\overset{\cl^\#(\mathcal{X})}{\rightarrow} B$.
\end{lem}

\begin{proof}
Let $\q$ and $\q'$ be sector-germs of $A$ and $B$ of the same sign. By (MA iii'), there exists an apartment $C$ containing $\q$ and $\q'$. Let $\q_1=\q,\ldots,\q_n=\q'$ be a gallery of sector-germs  from $\q$ to $\q'$ in $C$. One sets $A_1=A$ and $A_{n+1}=B$. By hypothesis, for all $i\in \llbracket 2,n\rrbracket$ there exists an apartment $A_i$ containing $\q_i$ and $\mathcal{X}$.  For all $i\in \llbracket 1,n-1\rrbracket$, $\q_i\cap \q_{i+1}$ is a splayed chimney and $A_i\cap A_{i+1}\supset \q_i\cap \q_{i+1}$. Therefore $A_i\cap A_{i+1}$ is enclosed and there exists $\phi_i:A_i\overset{A_i\cap A_{i+1}}{\rightarrow} A_{i+1}$. The set $A_n\cap A_{n+1}$ is also enclosed and there exists $\phi_n:A_n\overset{A_n\cap A_{n+1}}{\rightarrow} A_{n+1}$.

If $i\in \llbracket 1,n+1\rrbracket$, one sets $\psi_i=\phi_{i-1}\circ \ldots \circ \phi_1$. Then $\psi_i$ fixes $A_1\cap \ldots \cap A_i$.

Let $i\in \llbracket 1,n\rrbracket$ and suppose that $A_1\cap \ldots \cap A_{i}$ is enclosed in $A$. The isomorphism $\psi_i$ fixes $A_1\cap \ldots \cap A_{i}$ and thus we deduce that $A_1\cap \ldots \cap A_{i}=\psi_i(A_1\cap \ldots \cap A_{i})$ is enclosed in $A_i$. Moreover, $A_i\cap A_{i+1}$ is enclosed in $A_i$ and thus $A_1\cap\ldots\cap A_{i+1}$ is enclosed in $A_i$. Consequently $A_1\cap \ldots \cap A_{i+1}=\psi^{-1}_i(A_1\cap \ldots \cap A_{i+1})$ is enclosed in $A$. Let $X=A_1\cap \ldots \cap A_{n+1}$.  By induction, $X$ is enclosed in $A$ and $\phi:=\psi_n$ fixes $X$. As $X\supset \mathcal{X}$, we deduce that $X\in \cl^\#(\mathcal{X})$ and we get the lemma.

\end{proof}

\begin{lem}\label{lemÉquivalence des types 2 et 3}
Let $\I$ be a masure of type $3$. Then for all $\cl\in \mathcal{CL}_{\Lambda'}$, $\I$ satisfies (MA iii, $\cl$).
\end{lem}

\begin{proof}
Each face is contained in the finite enclosure of a local face and each chimney is  contained in the finite enclosure of an extended chimney. Thus by Lemma~\ref{lemType 3 donne (MA2 face)}, applied when $\mathcal{X}$ is a local face and a germ of a chimney, $\I$ satisfies (MA iii, $\cl^\#$). Consequently  for all $\cl\in \mathcal{CL}_{\Lambda'}$, $\I$ satisfies (MA iii, $\cl$), hence (MA3, $\cl$) and the lemma is proved.
\end{proof}

\begin{lem}\label{lemType3 implique (MA2')}
Let $\I$ be a masure of type $3$ and $\cl\in \mathcal{CL}_{\Lambda'}$. Then $\I$ satisfies (MA2', $\cl$).
\end{lem}

\begin{proof}
If $A$ is an apartment and $F$ is a filter of $A$, then $\cl(F)\subset \cl^\#(F)$. Therefore it suffices to prove that $\I$ satisfies (MA2', $\cl^\#$). We conclude the proof by applying Lemma~\ref{lemType 3 donne (MA2 face)} applied when $\mathcal{X}$ is a point, a germ of a preordered segment. 
\end{proof}

Using Proposition~\ref{propSuppression de MAO}, we deduce that  a masure of type $2$ or $3$ satisfies (MAO), as (MA4) is a consequence of (MA ii).

\begin{lem}\label{lemFixer une cheminee par son germe}
Let  $\I$ be a masure of type $3$. Let $\rr$ be a chimney of $\A$, $\rr=\rr(F^\ell,F^v)$, where $F^\ell$ (resp. $F^v$) is a local face (resp. vectorial face) of $A$. Let $\RR^\#=germ_\infty (\cl^\#(F^\ell,F^v))$. Let $A$ be an apartment containing $\rr$ and $\RR^\#$ and such that there exists $\phi:\A\overset{\RR^\#}{\rightarrow} A$. Then $\phi:\A\overset{\rr}{\rightarrow} A$.
\end{lem}

\begin{proof}
One can suppose that $F^v\subset \overline{C^v_f}$. Let $U\in \RR^\#$ such that $U$ is enclosed, $U\subset A\cap \A$ and such that $U$ is fixed by $\phi$. One writes $U=\bigcap_{i=1}^k D(\beta_i,k_i)$, with $\beta_1,\ldots,\beta_k\in \Phi$ and $(k_1,\ldots,k_r)\in \prod_{i=1}^r \Lambda_{\beta_i}'$.

 Let $\xi\in F^v$ be such that $U\in \cl (F^\ell+F^v+\xi)$. Let $J=\{i\in \llbracket 1,k\rrbracket|\ \beta_i(\xi)\neq 0\}$. For all $i\in \llbracket 1,r\rrbracket$,  one has $D(\beta_i,k_i)\supset n \xi$ for $n\gg 0$. Thus $\beta_i(\xi)>0$ for all $i\in J$. One has $U-\xi=\bigcap_{i=1}^k D(\beta_i,k_i+\beta_i(\xi))$. Let $\lambda\in [1,+\infty[$ be  such that for all $i\in J$, there exists $\tilde{k_i}\in \Lambda_{\beta_i}'$ such that  $k_i+\beta_i(\xi)\leq \tilde{k_i}\leq k_i+\lambda \beta_i(\xi)$.  Let $\tilde{U}=\bigcap_{i=1}^k D(\beta_i,\tilde{k_i})$. Then $ U-\xi\subset \tilde{U} \subset U-\lambda \xi $. Therefore, $\tilde{U}\in \rr$. Let $V'\in \rr$ be such that $V'\subset A\cap \A$ and such that $V'+F^v\subset V'$. Then $V:=\tilde{U}\cap V'\in \rr$. Let $v\in V$ and $\delta\subset F^v$ be the ray based at $0$ and containing $\xi$. By the proof of Proposition 5.4 of \cite{rousseau2011masures} (which uses only (MA1), (MA2'), (MA3), (MA4) and (MAO)), there exists $g_v:\A\overset{v+\delta}{\rightarrow} A$. As $V\subset U-\lambda\xi$, there exists a shortening $\delta'$ of $v+\delta$ contained in $U$.  Then $g_v^{-1}\circ \phi:\A\rightarrow \A$ fixes $\delta'$. Consequently, $g_v^{-1}\circ \phi$ fixes the support of $\delta'$ and thus $\phi$ fixes $v$: $\phi$ fixes $V$. Therefore $\phi$ fixes $\rr$ and the lemma follows.
\end{proof}

\begin{lem}\label{lemType 3 implique MA2''}
Let $\I$ be a masure  of type $3$ and $\cl\in \mathcal{CL}_{\Lambda'}$. Then  $\I$ satisfies (MA2'', $\cl$).
\end{lem}

\begin{proof}
Let $\rr=\cl(F^l,F^v)$ be a solid chimney of an apartment $A$ and $A'$ be an apartment containing $\rr$. One supposes that $A=\A$. Let $\rr^\#=\cl^\#(F^l,F^v)$ (resp. $\rr_e=F^l+F^v$) and  $\RR^\#$ (resp. $\RR_e$) be the germ of $\rr^\#$ (resp. $\rr_e$). By Lemma~\ref{lemType 3 donne (MA2 face)} applied with $\mathcal{X}=\mathfrak{R}_e$, there exists $\phi:A\overset{\mathfrak{R}^\#}{\rightarrow} A'$.   By Lemma~\ref{lemFixer une cheminee par son germe}, $\phi$ fixes $\rr$ and thus $\I$ satisfies (MA2'', $\cl$).

\end{proof}

We can now prove Theorem~\ref{thmAxiomatique des masures forte}: let $\cl\in \mathcal{CL}_{\Lambda'}$. By Lemma~\ref{lemType 1 implique Type 2}, a masure of type $(1,\cl)$ is also a masure of type $(2,\cl)$  and thus it is a masure of type $3$. By Lemma~\ref{lemÉquivalence des types 2 et 3}, Lemma~\ref{lemType3 implique (MA2')} and Lemma~\ref{lemType 3 implique MA2''}, a masure of type $3$  is  a masure of type $(1,\cl)$ which concludes the proof of the theorem.

\subsection{Friendly pairs in $\I$}

Let  $\A=(\A,W,\Lambda')$ be an apartment. Let $\I$ be a masure of type $\A$. We now use the finite enclosure $\cl=\cl^\#_{\Lambda'}$, which makes sense by Theorem~\ref{thmAxiomatique des masures}.  A family $(F_j)_{j\in J}$ of filters in $\I$ is said to be \textbf{friendly} if there exists an apartment containing $\bigcup_{j\in J} F_j$. In this section we obtain friendliness results for pairs of faces, improving results of Section 5 of \cite{rousseau2011masures}. We will use it to give a very simple axiomatic of masures in the affine case. These kinds of results  also  have an interest on their own: the definitions of the Iwahori-Hecke algebra of \cite{bardy2016iwahori} and of the  parahoric Hecke algebras of \cite{abdellatif2017completed} rely on the existence of apartments containing pairs of faces.

If $x\in \I$, $\epsilon\in \{-,+\}$ and $A$ is an apartment, one denotes by $\FF_x$ (resp. $\FF^\epsilon$, $\FF^\epsilon(A)$, $\CC_x$, $\ldots$) the set of faces of $\I$ based at $x$ (resp. and of sign $\epsilon$, and contained in $A$, the set of chambers of $\I$ based at $x$, $\ldots$). If $\X$ is a filter, one denotes by $\AC(\X)$ the set of apartments containing $\X$.

\begin{lem}\label{lemSeparation des chambres restreintes par des murs}
Let $A$ be an apartment of $\I$, $a\in A$ and $C_1,C_2\in \CC_a(A)$. Let $\mathcal{D}_a$ be the set of half-apartments of $A$  whose wall contains $a$. Suppose that $C_1\neq C_2$. Then there exists $D\in \mathcal{D}_a$ such that $D\supset C_1$ and $D\nsupseteq C_2$. 
\end{lem}

\begin{proof}
Let $C_1^v$ and $C_2^v$ be vectorial chambers of $A$ such that $C_1=F(a,C_1^v)$ and $C_2=F(a,C^v_2)$. Suppose that for all $D\in \mathcal{D}_a$ such that $D\supset C_1$, one has $D\supset C_2$. Let $X\in C_1$. There exists half-apartments $D_1,\ldots,D_k$ and $\Omega\in \mathcal{V}_A(a)$ such that $X\supset\bigcap_{i=1}^k D_i^\circ \supset \Omega\cap (a+C_1^v)$. 

Let $J=\{j\in \llbracket 1,k\rrbracket | \ D_j\notin \mathcal{D}_a\}$. For all $j\in J$, one chooses $\Omega_j\in \mathcal{V}_A(a)$ such that $D_j^\circ\supset \Omega_j$. If $j\in \llbracket 1,k\rrbracket \backslash J$, $D_j\supset C_1$, thus $D_j\supset C_2$ and hence $D_j^\circ \supset C_2$. Therefore, there exists $\Omega_j\in \mathcal{V}_A(a)$  such that $D_j^\circ\supset \Omega_j \cap (x+C^v_2)$. Hence \[X\supset \bigcap_{j=1}^k D_j^\circ\supset (\bigcap_{j=1}^k \Omega_j)\cap (x+C^v_2),\] thus $X\in C_2$ and $C_1\supset C_2$.

Let $D\in \mathcal{D}_a$ such that $D\supset C_2$. Suppose that $D\nsupseteq C_1$. Let $D'$ be the half-apartment opposite $D$. Then $D'\supset C_1$ and therefore $D'\supset C_2$: this is absurd. Therefore for all $D\in \mathcal{D}_a$ such that $D\supset C_2$, one has  $D\supset C_1$. By the same reasoning we just did, we deduce that $C_2\supset C_1$ and thus $C_1=C_2$. This is absurd and the lemma is proved.

\end{proof}

The following proposition improves Proposition 5.1 of \cite{rousseau2011masures}. It is the analogue of axiom (I1) of buildings (see the introduction).

\begin{prop}\label{propPaires amicales dans I}
Let $\{x,y\}$ be a friendly pair in $\I$. \begin{enumerate}

\item\label{itemPaire droite chambre} Let $A\in \AC(\{x,y\})$ and $\delta$ be a ray of $A$ based at $x$ and containing $y$ (if $y\neq x$, $\delta$ is unique) and $F_x\in \FF_x$. Then $(\delta,F_x)$ is friendly. Moreover, there exists  $A'\in \AC(\delta\cup F_x)$ such that there exists an isomorphism $\phi:A\overset{\delta}{\rightarrow}A'$.

\item\label{itemPaire de faces} Let $(F_x,F_y)\in \FF_x\times \FF_y$. Then $(F_x,F_y)$ is friendly.
\end{enumerate}

\end{prop}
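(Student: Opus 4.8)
The plan is to prove part~\ref{itemPaire droite chambre} by a direct construction and then to bootstrap part~\ref{itemPaire de faces} from it. The first thing I would isolate is a reduction for~\ref{itemPaire droite chambre} that completely avoids controlling the direction of $\delta$ (which may lie outside $\T\cup(-\T)$, so that the end of $\delta$ need not belong to any sector-germ). Suppose I can produce an apartment $A'$ and an isomorphism $\phi\colon A\to A'$ with $F_x\subset A'$ and $\phi(x)=x$, $\phi(y)=y$. Since an isomorphism of apartments is affine, fixing the two distinct points $x,y$ forces $\phi$ to fix pointwise the whole affine line they span; in particular $\phi$ fixes $\delta$, and as $\delta\subset A$ we get $\delta=\phi(\delta)\subset A'$. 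Hence $A'\in\AC(\delta\cup F_x)$ and $\phi\colon A\overset{\delta}{\to}A'$, which is the desired conclusion. So everything reduces to building such an $A'$ fixing $x,y$ and containing $F_x$.

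To build $A'$ I would mimic the construction in the proof of Lemma~\ref{lemRéduction au cas d'intérieur non vide}, but with an \emph{auxiliary} generic direction chosen from $F_x$ itself. Pick an apartment $A_x\ni F_x$ and a chamber $C_x$ of $A_x$ admitting $F_x$ as a face, with vectorial direction $C^v_x$ (a vectorial chamber, hence generic). Set $\RR_x=germ_\infty(\rr(C_x,C^v_x))$, a splayed (in fact full) chimney germ whose germ at infinity is the sector-germ of direction $C^v_x$; and, choosing the chamber $C_y=F(y,-C^v_x)$ of $A$, set $\RR_y=germ_\infty(\rr(C_y,-C^v_x))$. By (MA iii') there is an apartment $A'$ containing $\RR_x$ and $\RR_y$. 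Exactly as in Lemma~\ref{lemRéduction au cas d'intérieur non vide}, applying the parallelism statement Proposition~4.7.1 of~\cite{rousseau2011masures} to the opposite directions $C^v_x$ and $-C^v_x$ (each present in $A'$ through one of the two germs) yields $x\in A'$ and $y\in A'$. Now $\RR_x$ contains a generic ray, so $A_x\cap A'$ contains one and Theorem~\ref{thmIntersection enclose fort} makes $A_x\cap A'$ enclosed and convex; being convex, closed and containing both $x$ and the sector-germ $\RR_x$ of direction $C^v_x$, it contains the closed cone $x+\overline{C^v_x}$, hence an element of the filter $C_x$, so $C_x\subset A'$ and a fortiori $F_x\subset A'$. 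Finally $A\cap A'\supset\RR_y$ contains a generic ray, so Theorem~\ref{thmIntersection enclose fort} provides $\phi\colon A\overset{A\cap A'}{\to}A'$; since $x,y\in A\cap A'$, $\phi$ fixes $x$ and $y$, and by the reduction above $\phi\colon A\overset{\delta}{\to}A'$, proving~\ref{itemPaire droite chambre}.

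For~\ref{itemPaire de faces}, let $A\in\AC(\{x,y\})$ and let $\delta$ be the ray of $A$ from $x$ through $y$. Applying~\ref{itemPaire droite chambre} to $(\delta,F_x)$ produces $A'$ with $\delta\cup F_x\subset A'$ and an isomorphism $A\overset{\delta}{\to}A'$; in particular $y\in\delta\subset A'$, so $\{x,y\}$ is friendly in $A'$ and $F_x\subset A'$. I would then run the construction of~\ref{itemPaire droite chambre} a second time, inside $A'$, to insert $F_y$: build a full splayed chimney germ from a chamber dominating $F_y$ at $y$ and an opposite one at $x$, and obtain $A''$ with $F_y\subset A''$ and an isomorphism $\psi\colon A'\overset{A'\cap A''}{\to}A''$ fixing the enclosed set $A'\cap A''$ (which contains $x$). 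The remaining point is that $\psi$ must be arranged to fix $F_x$ as well, i.e.\ $F_x\subset A''$; this is where I would invoke Lemma~\ref{lemSéparation des chambres restreintes par des murs}: since $\psi$ fixes $x$ and fixes the half-apartments of $A'$ through $x$ belonging to the enclosed intersection $A'\cap A''$, it fixes the chamber $C_x\ge F_x$ that these half-apartments pin down, and therefore fixes $F_x$. Then $A''\in\AC(F_x\cup F_y)$.

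The step I expect to be the main obstacle is inserting the \emph{whole} face, not merely its base point: the parallelism argument only delivers the point $x$, and the convex hull of $x$ with a far sector in an unrelated direction never contains $F_x$. The device that resolves this for~\ref{itemPaire droite chambre} is the choice of the auxiliary direction equal to the direction $C^v_x$ of a chamber dominating $F_x$, so that convexity of the intersection (Theorem~\ref{thmIntersection enclose fort}) alone recovers $C_x$ (here Lemma~\ref{lemFixer une cheminée par son germe} provides the alternative ``germ fixed $\Rightarrow$ basis fixed'' mechanism, should the convexity route need reinforcing). The genuinely delicate point is part~\ref{itemPaire de faces}: keeping $F_x$ in place while forcing $F_y$ in, because the opposite-direction requirement of the parallelism argument cannot in general capture both faces simultaneously; this is precisely what the separation Lemma~\ref{lemSéparation des chambres restreintes par des murs} is designed to handle.
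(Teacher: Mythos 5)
Your opening reduction is sound: an isomorphism of apartments is affine, so if it fixes the two distinct points $x,y$ it fixes the whole line through them, hence $\delta$ (and the degenerate case $y=x$ reduces to this one by replacing $y$ by another point of $\delta$). The genuine gap is in the construction that is supposed to produce $A'$. The object ``the chamber $C_y=F(y,-C^v_x)$ of $A$'' is not defined: $C^v_x$ is a vectorial chamber of $A_x$, and $A$ and $A_x$ need share nothing beyond $x$ and the closure of $F_x$, so ``$-C^v_x$'' is not a vectorial chamber of $A$, and no transport of it to $A$ is canonical. More seriously, no choice of transport can make the parallelism step work. In Lemma~\ref{lemRéduction au cas d'intérieur non vide} the two apartments share a generic ray, so its direction is represented by a sector-face in \emph{each} of them, the opposite direction is taken inside one of them, and Proposition~4.7.1 of \cite{rousseau2011masures} recovers each base point precisely because the apartment carrying that base point also represents the direction of the \emph{other} chimney. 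Here this fails on both sides: to recover $y$ you would need the direction of $\RR_x$ (the class of the sector $x+C^v_x\subset A_x$) to be represented by a sector-face of $A$, and to recover $x$ you would need the direction of $\RR_y$ to be the opposite of that class taken inside $A_x$; neither holds in general. Already in a thick tree, with $A$ a line and $F_x$ an edge-germ at $x$ not lying on $A$, the end $\xi$ through $F_x$ is an end of no subray of $A$; and if the transported direction at $y$ is the end $\eta$ of $A$ with $x\in[y,\eta)$, the only apartment containing the two germs at infinity is the line from $\eta$ to $\xi$, which contains $x$ but not $y$. Nothing in your construction excludes this situation, so ``$x\in A'$ and $y\in A'$'' is unjustified, and everything after it (which is otherwise correctly argued via Theorem~\ref{thmIntersection enclose fort}) collapses.

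The same defect propagates into your part~\ref{itemPaire de faces}, and the patch you add there does not close it: the enclosed set $A'\cap A''$ produced by your second insertion has no reason to contain any half-apartment of $A'$ whose wall passes through $x$ (it can be as small as the enclosure of a segment-germ), and Lemma~\ref{lemSéparation des chambres restreintes par des murs} only asserts that two given distinct chambers at a point are separated by \emph{some} wall; it does not say that an isomorphism fixing whatever half-apartments happen to lie in $A'\cap A''$ must fix $C_x$, so ``$F_x\subset A''$'' does not follow. The paper's proof avoids directions at infinity altogether --- necessarily so, since $\delta$ need not be generic and $F_x$ need not share any direction with $A$. It takes a chamber $C$ of $A$ based at $x$, an apartment $B$ containing $C$ and a chamber $C_x$ dominating $F_x$, and a gallery $C=C_1,\ldots,C_n=C_x$ in $B$ (Proposition~5.1 of \cite{rousseau2011masures}); it then propagates by induction an apartment $A_i\supset C_i\cup\delta$ with an isomorphism $A\overset{\delta}{\rightarrow}A_i$: at each step the separating wall given by Lemma~\ref{lemSéparation des chambres restreintes par des murs} is transported to $A_i$ by the isomorphism fixing $C_i$, one keeps the half-apartment containing $\delta$ (respectively $F_x$, in part~\ref{itemPaire de faces}), and Proposition~2.9~1) of \cite{rousseau2011masures} supplies $A_{i+1}$. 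This wall-by-wall mechanism is the missing idea; Theorem~\ref{thmIntersection enclose fort}, on which your argument leans, is not the right tool here because it presupposes a shared generic ray, which is exactly what your configuration lacks.
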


\begin{proof}
We begin by proving~\ref{itemPaire droite chambre}. Let $C_x$ be a chamber of $\I$ containing $F_x$. Let $C$ be a chamber of $A$ based at $x$ and having the same sign as $C_x$. By Proposition~5.1 of \cite{rousseau2011masures}, there exists an apartment $B$ containing $C_x$ and $C$. Let $C_1=C,\ldots,C_n=C_x$ be a gallery in $B$ from $C$ to $C_x$. If $i\in \llbracket 1,n\rrbracket$, one sets $\mathcal{P}_i$: ``there exists an apartment $A_i$ containing $C_i$ and $\delta$ such that there exists an isomorphism $\phi:A\overset{\delta}{\rightarrow} A_i$''.
 The property $\mathcal{P}_1$ is true by taking $A_1=A$. Let $i\in \llbracket 1,n-1\rrbracket$ be such that $\mathcal{P}_i$ is true. 
 If $C_{i+1}=C_i$, then $\mathcal{P}_{i+1}$ is true. Suppose $C_i\neq C_{i+1}$. Let $A_i$ be an apartment containing $C_i$ and $\delta$. By Lemma~\ref{lemSeparation des chambres restreintes par des murs}, there exists a half-apartment $D$ of $A$ whose wall contains $x$ and such that $C_i\subset D$ and $C_{i+1}\nsubseteq D$. As $C_i$ and $C_{i+1}$ are adjacent, the wall $M$ of $D$ is the wall separating $C_i$ and $C_{i+1}$.  By (MA2), there exists an isomorphism $\phi:B\overset{C_i}{\rightarrow} A_i$. Let $M'=\phi(M)$ and $D_1$, $D_2$ be the half-apartments of $A_i$ delimited by $M'$. Let $j\in \{1,2\}$ such that $D_j\supset \delta$. By Proposition 2.9 1) of \cite{rousseau2011masures}, there exists an apartment $A_{i+1}$ containing $D_j$ and $C_{i+1}$. Let $\psi_i:A\overset{\delta}{\rightarrow} A_i$ and $\psi:A_i\overset{D_j}{\rightarrow} A_{i+1}$. Then $\psi\circ\psi_i:A\overset{\delta}{\rightarrow}A_{i+1}$. Therefore $\mathcal{P}_{i+1}$ is true. Consequently, $\mathcal{P}_n$ is true, which proves~\ref{itemPaire droite chambre}.

Let us prove~\ref{itemPaire de faces}, which is very similar to~\ref{itemPaire droite chambre}. As a particular case of~\ref{itemPaire droite chambre}, there exists an apartment $A'$ containing $F_x$ and $y$. Let $C_y$ be a chamber of $\I$ containing $F_y$. Let $C$ be a chamber of $A'$ based at $y$ and of the same sign as $F_y$. Let $C_1=C,\ldots,C_n=C_y$ be a gallery of chambers from $C$ to $C_y$ (which exists by Proposition 5.1 of \cite{rousseau2011masures}). By the same reasoning as above, for all $i\in \llbracket 1,n \rrbracket$, there exists an apartment containing $F_x$ and $C_i$, which proves~\ref{itemPaire de faces}.
\end{proof}

\subsection{Axiomatic of masures in the affine case}

In this section, we study the particular case of masures associated to irreducible affine Kac-Moody matrix $A$, which means that $A$ satisfies condition (aff) of Theorem $4.3$ of \cite{kac1994infinite}.

Let $\mathcal{S}$ be a generating root system associated to an irreducible and affine Kac-Moody matrix and $\A=(\mathcal{S},W,\Lambda')$ be an apartment. By Section 1.3 of  \cite{rousseau2011masures}, one has $\mathring{\T}=\{v\in \A|\delta(v)>0\}$ for some imaginary root $\delta\in Q^+\backslash\{0\}$ and $\T=\mathring{\T}\cup \A_{in}$, where $\A_{in}=\bigcap_{i\in I}\ker(\alpha_i)$.

We fix an apartment $\A$ of affine type.

\medskip 

Let (MA af i)=(MA1). 

Let (MA af ii) : let $A$ and $B$ be two apartments. Then $A\cap B$ is enclosed and there exists $\phi:A\overset{A\cap B}{\rightarrow} B$.

Let (MA af iii)= (MA iii).

\medskip

The aim of this subsection is to prove the following theorem: 

\begin{thrm}\label{thmAxiomatique dans le cas affine}
 Let $\I$ be a construction of type $\A$ and $\cl\in \mathcal{CL}_{\Lambda'}$. Then $\I$ is a masure  for $\cl$ if and only if $\I$ satisfies (MA af i), (MA af ii) and (MA af iii, $\cl$) if and only if $\I$ satisfies (MA af i), (MA af ii) and (MA af iii, $\cl^\#$).
\end{thrm}

\begin{remark}
Actually, we do not know if this axiomatic is true for masures associated to indefinite Kac-Moody groups. We do not know if the intersection of two apartments is always convex in a masure.
\end{remark}

The fact that we can exchange (MA af iii, $\cl^\#$) and (MA af iii, $\cl$) for all $\cl\in \mathcal{CL}_{\Lambda'}$ follows from Theorem~\ref{thmAxiomatique des masures forte}. The fact that a construction satisfying (MA af ii) and (MA af iii, $\cl^\#$) is a masure is clear and does not use the fact that $\A$ is associated to an affine Kac-Moody matrix.  It remains to prove that a masure of type $\A$ satisfies (MA af ii), which is the aim of this subsection. 

\begin{lem}\label{lemConvexite des intersections d'apparts cas affine ordonne}
Let  $A$ and $B$ be two apartments such that there exist $x,y\in A\cap B$ such that $x\mathring{\leq }y$ and $x\neq y$. Then $A\cap B$ is convex.
\end{lem}

\begin{proof}
One identifies $A$ and $\A$. Let $a,b\in \A\cap B$. If $\delta(a)\neq \delta(b)$, then $a\leq b$ or $b\leq a$ and $[a,b]\subset B$ by (MAO). Suppose $\delta(a)=\delta(b)$. As $\delta(x)\neq \delta(y)$, one can suppose that $\delta(a)\neq \delta(x)$. Then $[a,x]\subset B$. Let $(a_n)\in [a,x]^\N$ be  such that $\delta(a_n)\neq \delta (a)$ for all $n\in \N$ and $a_n\rightarrow a$. Let $t\in [0,1]$. Then $t a_n+(1-t)b \in B$ for all $n\in \N$ and by Proposition~\ref{propIntersection fermee}, $ta+(1-t)b\in B$: $\A\cap B$ is convex.
\end{proof}

\begin{lem}\label{lemConvexite dans le cas affine}
Let $A$ and $A'$ be two apartments of $\I$. Then $A\cap A'$ is convex. Moreover, if $x,y\in A\cap A'$, there exists an isomorphism $\phi:A\overset{[x,y]_A}{\rightarrow} A'$.
\end{lem}

\begin{proof}
Let $x,y\in A\cap A'$ be such that $x\neq y$. Let $C_x$ be a chamber of $A$ based at $x$ and $C_y$ be a chamber of $A'$ based at $y$. Let $B$ be an apartment containing $C_x$ and $C_y$, which exists by Proposition~\ref{propPaires amicales dans I}. By Lemma~\ref{lemConvexite des intersections d'apparts cas affine ordonne}, $A\cap B$ and $ A'\cap B$ are convex and by Proposition~\ref{propIsomorphisms fixant une partie convexe}, there exist isomorphisms $\psi:A\overset{ A\cap B}{\rightarrow} B$ and $\psi':B\overset{A'\cap B}{\rightarrow} A'$. Therefore $[x,y]_A=[x,y]_B=[x,y]_{A'}$. Moreover, $\phi=\psi'\circ \psi$ fixes $[x,y]_A$ and the lemma is proved.
\end{proof}

\begin{thrm}\label{thmConvexite cas affine}
Let $A$ and $B$ be two apartments. Then $A\cap B$ is enclosed and there exists an isomorphism $\phi:A\overset{A\cap B}{\rightarrow} B$.
\end{thrm}

\begin{proof}
The fact that $A\cap B$ is enclosed is a consequence of Lemma~\ref{lemConvexite dans le cas affine} and Proposition~\ref{propIntersection enclose quand convexe}. By Proposition~\ref{propÉcriture de l'intersection comme union des Pi, cas general}, there exist $\ell\in \N$, enclosed subsets $P_1,\ldots,P_\ell$ of $A$ such that $\supp (A\cap B)=\supp(P_j)$ and isomorphisms $\phi_j:A\overset{P_j}{\rightarrow}B$ for all $j\in \llbracket 1,\ell\rrbracket$. Let $x\in \In_r (P_1)$ and $y\in A\cap B$. By Lemma~\ref{lemConvexite dans le cas affine}, there exists $\phi_y:A\overset{[x,y]}{\rightarrow}B$. Then $\phi_y^{-1}\circ \phi_1$ fixes a neighborhood of $x$ in $[x,y]$ and thus $\phi_1$ fixes $y$, which proves the theorem. \end{proof}

\end{document}